 \documentclass[a4paper,11pt]{amsart}
\usepackage{amsmath,amsthm,amsfonts,amssymb,upgreek,multicol,mathrsfs,pifont,amscd,latexsym,cite,bm,geometry,color,fancyhdr,abstract,framed,appendix}
\usepackage{graphicx,algorithm,algorithmic,tabularx,url,booktabs,array,threeparttable,tikz}
\usepackage{enumitem}
\usepackage{comment}
\usetikzlibrary{calc}
\usetikzlibrary{angles,quotes}

\usepackage{caption}
\usepackage{subfig}
\usepackage{appendix}

\allowdisplaybreaks

\newcommand{\be}{\begin{eqnarray}}
\newcommand{\ee}{\end{eqnarray}}

\newcommand{\Kh}{\mathcal{T}_h}
\newcommand{\Khint}{\mathcal{T}_{h,\mathrm{int}}}
\newcommand{\Khb}{\mathcal{T}_{h,b}}

\newcommand{\IKh}{\mathcal{I}_h^{d}}
\newcommand{\IKhint}{\mathcal{I}_{h,\mathrm{int}}^{d}}
\newcommand{\IKhb}{\mathcal{I}_{h,b}^{d}}

\newcommand{\Fhdd}{\mathcal{F}_h^{d-1}}
\newcommand{\Fhddint}{\mathcal{F}_{h,\mathrm{int}}^{d-1}}
\newcommand{\Fhddb}{\mathcal{F}_{h,b}^{d-1}}
\newcommand{\Ehdd}{\mathcal{E}_h^{d-2}}
\newcommand{\Ehddint}{\mathcal{E}_{h,\mathrm{int}}^{d-2}}
\newcommand{\Ehddb}{\mathcal{E}_{h,b}^{d-2}}
\newcommand{\faceF}[2]{F_{#2}^{#1}}
\newcommand{\faceE}[2]{E_{#2}^{#1}}
\makeatletter
\newcommand{\Xint}[1]{\mathchoice
  {\XXint\displaystyle\textstyle{#1}}%
  {\XXint\textstyle\scriptstyle{#1}}%
  {\XXint\scriptstyle\scriptscriptstyle{#1}}%
  {\XXint\scriptscriptstyle\scriptscriptstyle{#1}}%
  \!\int}
\newcommand{\XXint}[3]{%
  \setbox0=\hbox{$#1{#2#3}{\int}$}%
  \vcenter{\hbox{$#2#3$}}\kern-.5\wd0}
\newcommand{\fint}{\Xint-}
\makeatother
\newcommand{\vsum}[1]{\Sigma_{#1}}

\newcommand{\ben}{\begin{equation*}}
\newcommand{\een}{\end{equation*}}

\newtheorem{theorem}{Theorem}[section]

\newtheorem{df}[theorem]{Definition}
\newtheorem{lemma}[theorem]{Lemma}

\newtheorem{remark}[theorem]{Remark}

\numberwithin{equation}{section}
\numberwithin{table}{section}
\numberwithin{figure}{section}

\begin{document}

\title[]{Piecewise $P_2$
 Finite Element Space on Rectangular Meshes in Arbitrary Dimensions for the Biharmonic Equation}

\thanks{The first author was supported by NSFC project No. 12301466 and by NSFC project No. 12422114.}

\subjclass[2020]{Primary 65N30, 65N15}

\author[R. Ma]{Rui Ma}
\address{School of Mathematics and Statistics, Beijing Institute of Technology, 
	Beijing 100081, P. R. China. }
\email{rui.ma@bit.edu.cn}

\author{Yang Xing}
\address{School of Mathematics and Statistics, Beijing Institute of Technology, 
	Beijing 100081, P. R. China. }
\email{xingyang@bit.edu.cn}
\author{Shuo Zhang}
\address{School of Mathematical Sciences, University of Chinese Academy of Sciences, Beijing 100049, China }
\email{szhang@lsec.cc.ac.cn}

\maketitle
\begin{abstract}
This paper develops a piecewise $P_2$ finite element space  on rectangular
meshes in arbitrary dimensions for the biharmonic equation.  The discrete space  imposes continuity of vertex sums on $(d-2)$-faces and of averaged normal derivatives across interior $(d-1)$-faces. A local algebraic characterization of $P_2$ is first established through the unisolvence of an enriched polynomial space and the resulting compatibility conditions. For each interior element, a $3^d$-element patch is then introduced, and the associated homogeneous patch space is shown to be one-dimensional. The corresponding patch functions, extended by zero, form a basis of the global homogeneous discrete space. A local quadratic interpolation operator is further constructed in arbitrary dimensions. Its approximation properties, together with a consistency estimate derived from the weak continuity conditions, lead to the optimal broken $H^2$ error estimate for the biharmonic problem.
\end{abstract}

\keywords{piecewise $P_2$ finite element space, rectangular mesh, biharmonic equation, arbitrary dimension}
 
\section{Introduction}
Nonconforming finite element methods provide an effective approach to fourth-order elliptic problems: by relaxing the global $H^2$ conformity requirement and replacing pointwise
continuity conditions on function values and derivatives with suitable weak
continuity conditions, these methods can substantially reduce the polynomial degree and the number of degrees of freedom while retaining optimal approximation properties. They have therefore been widely used for biharmonic-type problems.

On simplicial meshes, the theory of nonconforming methods is relatively well developed. The classical Morley element \cite{morley1967triangular} is the prototype for fourth-order problems. It employs quadratic polynomials with vertex values and edge normal-derivative moments as its degrees of freedom. This construction has been systematically generalized. In particular, Wang and Xu \cite{WangXu2006,wang2013minimal} developed a unified family of minimal nonconforming finite element spaces for $2m$-th order elliptic equations on simplicial meshes, with the Morley element as the lowest-degree case for fourth-order problems. More recently, Wu and Xu \cite{WuXu2019} and Li and Wu \cite{LiWu2026} have extended related constructions to arbitrary orders and dimensions.

The situation is different on quadrilateral and rectangular meshes. For second-order elliptic problems, low-order nonconforming methods based on piecewise linear polynomials have been studied extensively. Their global spaces are typically characterized by interelement moment conditions rather than pointwise continuity. Park and Sheen \cite{ParkSheen2003} proposed a $P_1$
 nonconforming element on general quadrilateral meshes based on edge moment continuity, while Hu and Shi \cite{HuShi2005} introduced a constrained nonconforming rotated $Q_1$
 element with three local degrees of freedom. Park and Sheen \cite{ParkSheen2005} considered extensions to rectangular meshes in arbitrary dimensions.

For fourth-order problems, far fewer constructions are available on rectangular meshes. The tensor-product geometry hinders the direct transfer of simplicial design principles. Classical examples include the Adini element \cite{AdiniNonFem}, the rectangular Morley element, and the family of $d$-rectangle Morley elements in \cite{WangShiXu2007}. On general convex quadrilateral meshes, Park and Sheen \cite{ParkSheen2013} proposed a Morley-type nonconforming element for the Kirchhoff plate problem. More recently, Zhang \cite{zhang2020minimalconsistent} constructed a minimal consistent finite element space consisting only of piecewise quadratic polynomials on two-dimensional quadrilateral grids. Quadratic-preserving interpolation techniques for such spaces were subsequently developed in two dimensions \cite{DaiZengZhangZhang2024}. A corresponding $P_2$ construction, which is of theoretically lowest degree, on rectangular meshes in arbitrary dimensions, however, has remained open.

This paper develops piecewise $P_2$ finite element space  on rectangular
meshes in arbitrary dimensions for the biharmonic equation. The resulting
space is nonconforming, with local shape functions consisting solely of
polynomials in $P_2$, without cubic or higher-order enrichment. The discrete
spaces impose continuity of vertex sums on $(d-2)$-faces and of averaged
normal derivatives across interior $(d-1)$-faces. In two dimensions, the
resulting homogeneous space coincides with the reduced quadrilateral Morley
space on rectangular meshes \cite{zhang2020minimalconsistent}.

The main contributions of this paper are as follows. First, a local algebraic characterization of $P_2$ on a $d$-dimensional rectangle is established by proving the unisolvence of an enriched polynomial space with respect to a suitable set of functionals and by deriving compatibility conditions that characterize characterize functions in $P_2$. Second, for each interior element, a $3^d$-element patch is introduced, and the associated homogeneous patch space is proved to be one-dimensional; the corresponding patch functions, extended by zero to the whole domain, are shown to form a basis of the global homogeneous space, yielding an explicit dimension formula. Third, a local quadratic-reproducing interpolation operator is constructed
on an extended rectangular mesh in arbitrary dimensions. The continuity conditions further imply continuity of  the face-averaged
gradient across interior $(d-1)$-faces, which yields the required consistency estimate. Together with the approximation properties of the interpolation operator, this gives, for $u\in H_0^2(\Omega)\cap H^3(\Omega)$,
\[ \|u-u_h\|_{2,h} \leq Ch\bigl(\|u\|_{3,\Omega}+h\|f\|_{0,\Omega}\bigr). \]
The resulting space also admits a natural interpretation as a nonconforming quadratic spline space on rectangular meshes: local quadratic polynomials are coupled through weak interelement continuity conditions, and the associated basis functions have compact patch supports.

The discrete space constructed in this paper is not of the classical
Ciarlet type. It has a natural spline structure: piecewise quadratic
polynomials are coupled through weak interelement continuity conditions,
and the corresponding basis functions have compact patch supports. The
construction thus provides a new type of nonconforming quadratic spline
on rectangular meshes, whose weak continuity structure makes a
lowest-degree $P_2$ discretization of the biharmonic problem possible in
arbitrary dimensions.

The remainder of the paper is organized as follows.
Section~\ref{sec:preli} introduces the notation, the discrete spaces, and the
main results, followed by a three-dimensional numerical experiment.
Section~\ref{sec:local-unisolvence-patch} establishes the local unisolvence
and compatibility properties and constructs the patch basis functions.
Section~\ref{sec:basis-interpolation-error} proves the global basis theorem,
develops the interpolation operator, and derives the corresponding
approximation and consistency estimates, which lead to the error estimate for
the biharmonic problem.
The auxiliary proofs are collected in the appendix.
\section{Preliminaries and main results}\label{sec:preli}
This section introduces the notation, the biharmonic model problem, and the rectangular mesh used throughout the paper. The piecewise $P_2$ finite element space  and the corresponding discrete problem are then defined, followed by the main theoretical results and a three-dimensional numerical experiment illustrating the convergence of the proposed method.

Let $\Omega$ be a rectangular domain in the $d$-dimensional Euclidean space $\mathbb{R}^d$, and denote its boundary by $\partial\Omega$. For a subdomain $D\subset\Omega$, $|D|$ denotes its measure. The average of an integrable function $g$ over $D$ is $\fint_D g:=|D|^{-1}\int_D g$. The notation $\#A$ denotes the number of elements in a finite set $A$. If $\Phi$ is a set of functionals, then $\Phi(v)=0$ means $\phi(v)=0$ for all $\phi\in\Phi$. The standard Sobolev spaces are denoted by $H^m(D)$ and $H^m_0(\Omega)$. 
The norm and seminorm on $H^m(D)$ are denoted by $\|\cdot\|_{m,D}$ and $|\cdot|_{m,D}$, respectively; when $m=0$, $\|\cdot\|_{0,D}$ is the $L^2(D)$ norm. Let $P_m(D)$ denote the space of polynomials on $D$ of total degree at most $m$.

Consider the model problem:
\begin{equation}
    \begin{cases}
\Delta^{2} u = f & \text{in }\Omega,\\[4pt]
u =  \dfrac{\partial u}{\partial \nu} = 0 & \text{on }\partial\Omega.
\end{cases}
\label{biharmonic}
\end{equation}
Given $f\in L^2(\Omega)$, the variational problem seeks
$u\in H_0^2(\Omega)$ such that
\begin{equation}
(\nabla^{2}u,\nabla^{2}v)=(f,v)\quad\text{for all }  v\in H_0^2(\Omega).
  \label{weak-biharmonic}
\end{equation}\subsection{Rectangular meshes and finite element spaces}\label{subsec:mesh}
Let $\mathcal{T}_h$ be a shape-regular rectangular mesh of $\Omega$. Along each coordinate direction $x_i$, $1\leq i\leq d$,
 the closure of $\Omega$ is subdivided by the
nodes $x_{i,1}<x_{i,2}<\cdots<x_{i,N_i+1}$. Set
$h_{i,r}:=x_{i,r+1}-x_{i,r}>0$ for $1\leq i\leq d$ and
$1\leq r\leq N_i$. Let $e_i$ be the $i$-th coordinate vector for $1\leq i\leq d$. Introduce the element index set 
$\IKh:=\{\alpha=(\alpha_1,\ldots,\alpha_d):1\leq\alpha_i\leq N_i,\
1\leq i\leq d\}$. For each $\alpha\in\IKh$, define
$K_{\alpha}:=\prod_{i=1}^{d}[x_{i,\alpha_i},x_{i,\alpha_i+1}]$, and
denote its barycenter by $x_{K_\alpha}$.
For $1\leq i\leq d$, the faces of $K_\alpha$ given by
$x_i=x_{i,\alpha_i}$ and $x_i=x_{i,\alpha_i+1}$ are called  the lower and upper $x_i$-faces of $K_\alpha$, respectively.
The closure of $\Omega$ is exactly the union of all elements
$K_\alpha$, and $\Kh=\{K_{\alpha}:\alpha\in\IKh\}$. The interior
and boundary element index sets are
$\IKhint:=\{\alpha\in\IKh:2\leq\alpha_i\leq N_i-1,\
1\leq i\leq d\}$ and
$\IKhb:=\{\alpha\in\IKh:\alpha_i\in\{1,N_i\}\text{ for at least one
}i\}$, respectively. The corresponding element sets are
$\Khint:=\{K_{\alpha}:\alpha\in\IKhint\}$ and
$\Khb:=\{K_{\alpha}:\alpha\in\IKhb\}$.Thus
$\Kh=\Khint\cup\Khb$, and each element in $\mathcal{T}_h$ is uniquely determined by its index $\alpha$. Let $h:=\max_{K\in\Kh}\operatorname{diam}K$.  For a mesh $\Kh$, the broken seminorm and norm are defined by $|v|_{m,h}^{2}:=\sum_{K\in\Kh}|v|_{m,K}^{2}$ and $\|v\|_{m,h}^{2}:=\sum_{K\in\Kh}\|v\|_{m,K}^{2}$, respectively. 

For every direction $1\leq i\leq d$, define the $(d-1)$-face index set $\mathcal{I}_{h,i}^{d-1}:=\{\beta=(\beta_1,\ldots,\beta_d):1\leq\beta_i\leq N_i+1,\ 1\leq\beta_m\leq N_m\text{ for }m\ne i\}$. For $\beta\in\mathcal{I}_{h,i}^{d-1}$, define  $ \faceF{i}{\beta}:=\{x\in\overline\Omega:x_i=x_{i,\beta_i},\ x_m\in[x_{m,\beta_m},x_{m,\beta_m+1}],\ m\ne i\}$, and denote its barycenter by $x_{F_\beta^i}$. Each index pair $(i,\beta)$ determines one $(d-1)$-face perpendicular to the $x_i$-direction. The set of all $(d-1)$-faces is denoted by $\Fhdd:=\{\faceF{i}{\beta}:1\leq i\leq d,\ \beta\in\mathcal{I}_{h,i}^{d-1}\}$. The interior and boundary $(d-1)$-face sets are denoted by $\Fhddint:=\{\faceF{i}{\beta}\in\Fhdd:2\leq\beta_i\leq N_i\}$ and $\Fhddb:=\{\faceF{i}{\beta}\in\Fhdd:\beta_i\in\{1,N_i+1\}\}$, respectively.

For a $(d-1)$-face $F\in\Fhdd$, let $\mathcal{V}(F)$ denote its set of vertices. For a continuous function $v$ on $F$, define the vertex-sum functional by $\vsum{F}(v):=\sum_{z\in\mathcal{V}(F)}v(z)$.
For every $(d-1)$-face $F\in\Fhdd$, fix a unit normal vector $\nu_F$; on a boundary face, $\nu_F$ is the outward unit normal. For a sufficiently smooth function $v$ on an element adjacent to $F$, define the normal-derivative average by $\fint_F \partial_{\nu_F}v\,ds:=|F|^{-1}\int_F \partial_{\nu_F}v\,ds$.

For $1\leq i\ne j\leq d$, define the $(d-2)$-face index set $\mathcal{I}_{h,i,j}^{d-2}:=\{\gamma=(\gamma_1,\ldots,\gamma_d):1\leq\gamma_i\leq N_i+1,\ 1\leq\gamma_j\leq N_j+1,\ 1\leq\gamma_m\leq N_m\text{ for }m\ne i,j\}$. For $\gamma\in\mathcal{I}_{h,i,j}^{d-2}$, define $\faceE{i,j}{\gamma}:=\{x\in\overline\Omega:x_i=x_{i,\gamma_i},\ x_j=x_{j,\gamma_j},\ x_m\in[x_{m,\gamma_m},x_{m,\gamma_m+1}],\ m\ne i,j\}$, and denote its barycenter by $x_{E_\gamma^{i,j}}$. Note that $\mathcal{I}_{h,i,j}^{d-2}=\mathcal{I}_{h,j,i}^{d-2}$ and
$E_\gamma^{i,j}=E_\gamma^{j,i}$. The set of all $(d-2)$-faces is denoted by  $\Ehdd:=\{\faceE{i,j}{\gamma}:1\leq i< j\leq d,\ \gamma\in\mathcal{I}_{h,i,j}^{d-2}\}$. The interior and boundary $(d-2)$-face sets are denoted by $\Ehddint:=\{\faceE{i,j}{\gamma}\in\Ehdd:2\leq\gamma_i\leq N_i,\ 2\leq\gamma_j\leq N_j\}$ and $\Ehddb:=\{\faceE{i,j}{\gamma}\in\Ehdd:\gamma_i\in\{1,N_i+1\}\text{ or }\gamma_j\in\{1,N_j+1\}\}$, respectively. 

For a $(d-2)$-face $E\in\Ehdd$, let $\mathcal{V}(E)$ denote its set of vertices. For a continuous function $v$ on $E$, define the vertex-sum functional by $\vsum{E}(v):=\sum_{z\in\mathcal{V}(E)}v(z)$. 

The piecewise quadratic finite element spaces on $\Kh$ are defined as follows.
\begin{itemize}
    \item The $d$-dimensional nonconforming $P_2$ rectangular finite element space
    \begin{equation}
    \begin{aligned}
        V_{h}
:= \Big\{ v_h& \in L^{2}(\Omega) \;:\;
v_h\big|_{K} \in P_{2}(K)\ \ \text{for all } K \in \Kh,\\
&\vsum{E}(v_h)\ \text{is continuous across every}\ E\in\Ehdd,\\
&\fint_{F} \frac{\partial v_h}{\partial \nu_F}\,ds
\ \text{is continuous across every}\ F\in\Fhddint\Big\}.
    \end{aligned}
    \label{ncp2-space}
 \end{equation}

 \item The homogeneous $d$-dimensional nonconforming $P_2$ rectangular finite element space
 \begin{equation}
     \begin{aligned}
         V_{h0}
:= \Big\{ v_h \in V_{h} \;:\;
\vsum{E}(v_h)=& 0 \ \text{on every}\ E\in\Ehddb,\\
\fint_{F} \frac{\partial v_h}{\partial \nu_F}\,ds=&0 \ \text{on every}\ F\in\Fhddb
\Big\}.
     \end{aligned}
 \label{ncp2-space-homogeneous}
 \end{equation}
\end{itemize}
When $d=2$, $V_{h0}$ coincides with the homogeneous reduced
quadrilateral Morley (RQM) finite element space in
\cite{zhang2020minimalconsistent} when restricted to rectangular meshes.

The finite element discretization for
\eqref{weak-biharmonic} is to find $u_h\in V_{h0}$ such that
\begin{equation}\label{eq:discrete-ncp2}
\sum_{K \in \Kh} \big( \nabla^{2} u_h, \nabla^{2} v \big)_{K}
= (f, v)\quad\text{for all }  v\in V_{h0}.
\end{equation} 
\subsection{Main results}\label{sec:main-top}
The following two main results establish an explicit basis characterization of $V_{h0}$ and the optimal error estimate for the discrete problem~\eqref{eq:discrete-ncp2}.  For each interior element $K_\alpha$, let
$\varphi_\alpha$ denote the corresponding patch function constructed in
Subsection~\ref{sec:basis-vh0}.

\begin{theorem}
\label{thm:dim-theory}
Let $\Omega$ be a rectangular domain in $\mathbb{R}^d$, let $\Kh$ be a
rectangular mesh of $\Omega$, and let $V_{h0}$ be the homogeneous
$d$-dimensional nonconforming $P_2$ rectangular finite element space
on $\Kh$. Then
\[
\{\varphi_\alpha:\alpha\in\IKhint\}
\]
is a basis of $V_{h0}$. Consequently,
\[
\dim V_{h0}=\#\Khint.
\]
\end{theorem}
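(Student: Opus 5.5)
We prove two things: the patch functions $\varphi_\alpha$, $\alpha\in\IKhint$, lie in $V_{h0}$ and are linearly independent, and they span $V_{h0}$. Membership comes from the patch construction. Each $\varphi_\alpha$ is the nonzero element of the one-dimensional homogeneous patch space on the $3^d$-element patch $\omega_\alpha:=\bigcup\{K_{\alpha+\delta}:\delta\in\{-1,0,1\}^d\}$, extended by zero. Since $\alpha$ is interior, $\omega_\alpha\subset\overline\Omega$. The homogeneous conditions on $\partial\omega_\alpha$ (vanishing vertex sums on boundary $(d-2)$-faces and vanishing normal-derivative averages on boundary $(d-1)$-faces of the patch) are exactly what the zero extension needs to satisfy the continuity conditions of \eqref{ncp2-space} across $\partial\omega_\alpha$. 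They also give the boundary conditions of \eqref{ncp2-space-homogeneous} where $\partial\omega_\alpha$ meets $\partial\Omega$.

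For independence, I use the triangular structure of the supports. Order $\IKhint$ lexicographically and suppose $\sum_\alpha c_\alpha\varphi_\alpha=0$. Let $\alpha^*$ be the lexicographically smallest index with $c_{\alpha^*}\neq0$, and look at the corner element $K_{\alpha^*-(1,\dots,1)}$ of $\omega_{\alpha^*}$. This element lies in $\omega_\alpha$ only for $\alpha\le\alpha^*$ componentwise, and such $\alpha$ have $c_\alpha=0$ unless $\alpha=\alpha^*$. So $c_{\alpha^*}\varphi_{\alpha^*}$ vanishes on that corner element. It remains to check that $\varphi_{\alpha^*}$ does not vanish identically on its corner elements. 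I would argue this by the local unisolvence/compatibility characterization: if $\varphi_{\alpha^*}$ vanished on a corner element, the patch conditions would propagate zeros inward, and one-dimensionality plus nontriviality would give a contradiction. Alternatively, the explicit form of the patch function from its construction may show directly that it is nonzero on corners.

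For spanning, which I expect to be the main obstacle, I would use a sweeping (peeling) argument rather than a direct dimension count, because counting constraints in $V_{h0}$ directly runs into dependencies among the functionals. Take $v\in V_{h0}$. The first step is to show that $v$ vanishes on every element of the corner boundary layer adjacent to the lowest corner of $\Omega$. To do this, apply the compatibility conditions characterizing $P_2$ on a single element to the boundary element $K_{(1,\dots,1)}$: the zero boundary vertex sums and normal averages on its lower faces, combined with the unisolvence of the enriched space, should force $v|_{K_{(1,\dots,1)}}$ to be determined by a single parameter. Subtracting $c\,\varphi_{(2,\dots,2)}$ with a suitable $c$ kills $v$ on $K_{(1,\dots,1)}$. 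I then proceed lexicographically: at the step for index $\alpha+(1,\dots,1)$ with $\alpha\in\IKh$, the element $K_\alpha$ has all lower neighbours already zero. The weak continuity conditions on its lower faces then become homogeneous "boundary" conditions, leaving one free parameter, which is removed with $\varphi_{\alpha+(1,\dots,1)}$, possible while $\alpha+(1,\dots,1)\in\IKhint$.

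Elements $K_\alpha$ with some $\alpha_i\in\{N_i-1,N_i\}$ have no available patch function. For these, the extra homogeneous conditions on the upper boundary faces of $\Omega$, together with the conditions shared with already-zeroed neighbours, must force $v|_{K_\alpha}=0$ outright. Verifying that the local functionals suffice here, and counting carefully for elements touching several upper boundary faces, is the delicate part. I would rely on the local unisolvence lemma of Section~\ref{sec:local-unisolvence-patch} applied with the appropriate subset of faces zeroed, mirroring the proof that the patch space is one-dimensional. After the sweep $v=\sum c_\alpha\varphi_\alpha$, which together with independence gives $\dim V_{h0}=\#\Khint$.
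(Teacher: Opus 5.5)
Your membership and independence arguments, and the inner part of your spanning sweep, are essentially the paper's. You process the lower-corner elements $K_\kappa$, $1\le\kappa_i\le N_i-2$, lexicographically. Zero data on their lower faces leave one free parameter, namely the common value of the upper--upper $(d-2)$-face vertex sums. You remove it with $\varphi_{\kappa+(1,\ldots,1)}$, whose restriction to $K_\kappa$ is nonzero by Lemma~\ref{lem:single-interior-zero}.

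The gap is in the outer layers. You flag them as delicate, but the mechanism you describe does not work. You claim that each $K_\alpha$ with some $\alpha_i\in\{N_i-1,N_i\}$ is forced to vanish ``outright'' by its own data. Take $K^-:=K_{(1,\ldots,1,N_d-1)}$ when your sweep reaches it. Its lower faces carry zero data, but none of its upper faces lies on $\partial\Omega$. So, exactly like an interior corner element, it still has one free parameter, and there is no patch function available to remove it. Its outer neighbour $K^+:=K_{(1,\ldots,1,N_d)}$ also has known zero data on only one face per direction, because its lower $x_d$-face is shared with $K^-$, which is not yet known to vanish. So $K^+$ has one free parameter too. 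Neither element can be eliminated by single-element unisolvence, and a one-element-at-a-time sweep stalls at the first element of a penultimate layer.

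The missing idea is the two-element coupling across the common face $F_*:=K^-\cap K^+$, which is Situation~1 in the proof of Lemma~\ref{lem:single-interior-zero}. By \eqref{eq:a-compatibility}, all unfixed vertex sums on $F_*$ share one value $\sigma$. Set $g:=\fint_{F_*}\partial v/\partial x_d\,ds$, which is common to both sides by interior continuity. Then \eqref{eq:b-a-compatibility} gives $\sigma/h_{d,N_d-1}=2^{d-2}g$ on $K^-$ but $-\sigma/h_{d,N_d}=2^{d-2}g$ on $K^+$. The opposite signs force $\sigma=g=0$, so both restrictions vanish.

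This argument must be applied to such pairs, in an order that guarantees zero data on the lower faces in the other directions. The paper first clears the whole block $1\le\kappa_i\le N_i-2$ and then propagates through the layers $\{N_i-1,N_i\}$ direction by direction. Without this coupled argument your spanning proof is incomplete.

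The case you single out as hard, elements touching several upper boundary faces, is actually the easy one. Once the lower neighbour in a direction with $\alpha_i=N_i$ vanishes, both faces in that direction carry zero data, and the element is killed directly.
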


The proof is given in Subsection~\ref{sec:proof-dim-theory}.

\begin{theorem}
\label{thm:error}
Let $\Kh$ be a shape-regular rectangular mesh of $\Omega$, and let $u$ and $u_h$
be the solutions of \eqref{weak-biharmonic} and
\eqref{eq:discrete-ncp2}, respectively. If $u\in H^3(\Omega)$, then
\begin{equation}
\label{eq:4.6}
\|u-u_h\|_{2,h}
\leq Ch\bigl(\|u\|_{3,\Omega}+h\|f\|_{0,\Omega}\bigr).
\end{equation}
\end{theorem}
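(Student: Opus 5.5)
We follow the second Strang lemma. The first step is to show that $\|\cdot\|_{2,h}$, or equivalently $|\cdot|_{2,h}$, is a norm on $V_{h0}$. Suppose $|v_h|_{2,h}=0$. Then $v_h$ is piecewise affine, so on each $(d-1)$-face the normal-derivative averages coincide with the traces of the affine gradients. The continuity of averaged normal derivatives across interior faces, together with the continuity of vertex sums on $(d-2)$-faces, forces the elementwise gradients to agree. The boundary conditions in \eqref{ncp2-space-homogeneous} then force $v_h\equiv0$. The same argument also gives a discrete Poincar\'e--Friedrichs inequality $\|v_h\|_{0}\le C|v_h|_{2,h}$ on $V_{h0}$. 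I would prove it by a standard enrichment (averaging) argument, or more directly by bounding $\|v_h\|_{0,\Omega}$ and $|v_h|_{1,h}$ through the jumps of face averages.

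With the broken energy norm in hand, Strang's lemma gives
\[
|u-u_h|_{2,h}\le C\Bigl(\inf_{v_h\in V_{h0}}|u-v_h|_{2,h}+\sup_{0\ne w_h\in V_{h0}}\frac{|(\nabla^2u,\nabla^2w_h)_h-(f,w_h)|}{|w_h|_{2,h}}\Bigr).
\]
For the approximation term I use the local quadratic-reproducing interpolation operator $\Pi_h$ constructed on the extended rectangular mesh. It satisfies $\Pi_h u\in V_{h0}$ for $u\in H_0^2(\Omega)$ (after extension by zero) and reproduces $P_2$ on patches. Bramble--Hilbert on patches then yields $|u-\Pi_hu|_{m,h}\le Ch^{3-m}|u|_{3,\Omega}$ for $m=0,1,2$.

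For the consistency term, write $\sigma=\nabla^2u$ and integrate by parts elementwise, using $\Delta^2u=f$ in the distributional sense. One route uses $H^4$, which is not assumed. Instead I integrate by parts only once and use density:
\[
(\sigma,\nabla^2w_h)_h-(f,w_h)=\sum_{F}\int_F \sigma\nu_F\cdot[\![\nabla w_h]\!]\,ds-\Bigl(\sum_K\int_{\partial K}\partial_\nu\sigma\, w_h\Bigr)\text{-type terms}.
\]
Because the weak continuity here concerns only vertex sums (not face means of $w_h$), the second group should be handled by introducing a conforming or $H^1$-type companion of $w_h$. Concretely, I would define $E_hw_h$ as the piecewise $Q_1$ (multilinear) continuous interpolant built from vertex sums/averages of $w_h$. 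Then I write $(f,w_h)=(f,E_hw_h)+(f,w_h-E_hw_h)$ and bound $\|w_h-E_hw_h\|_0\le Ch^2|w_h|_{2,h}$. This produces the $h^2\|f\|_0$ term.

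For the first group, the key fact from the construction is that the continuity conditions imply continuity of the full face-averaged gradient $\fint_F\nabla w_h$ across interior faces. On boundary faces this average vanishes, since the tangential components are controlled by the vertex sums on $(d-2)$-faces. Hence $\int_F \sigma\nu_F\cdot[\![\nabla w_h]\!]=\int_F(\sigma-\Pi_F^0\sigma)\nu_F\cdot[\![\nabla w_h]\!]$. By trace and Poincar\'e inequalities this is bounded by $Ch|u|_{3}|w_h|_{2,h}$. The term coupling $\nabla\sigma$ with $w_h-E_hw_h$ is bounded by $Ch|u|_3|w_h|_{2,h}$ in the same way.

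Combining these bounds with the discrete Poincar\'e inequality, which upgrades $|\cdot|_{2,h}$ to $\|\cdot\|_{2,h}$, gives \eqref{eq:4.6}. I expect the main obstacle to be the consistency analysis. It requires extracting the face-averaged gradient continuity from vertex-sum and averaged-normal conditions in arbitrary $d$. It also requires organizing the integration by parts with only $u\in H^3$ so that the leftover term is exactly $h^2\|f\|_0$.
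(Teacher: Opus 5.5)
Your overall plan follows the paper's route: the second Strang lemma, an interpolation estimate, and a consistency estimate based on continuity of the face-averaged full gradient (Lemma~\ref{lem:gradient-average-continuity}), with an $H_0^1$ companion absorbing $(f,w_h)$ in the Wang--Xu manner the paper cites. The consistency part is sound as a plan. Two points need more than you wrote:
\begin{itemize}
\item For $d\ge3$ vertex values of $w_h$ are discontinuous. The bound $\|w_h-E_hw_h\|_0+h|w_h-E_hw_h|_{1,h}\le Ch^2|w_h|_{2,h}$ for your averaged $Q_1$ companion therefore needs an extra observation. A $P_2$ jump on a face whose $(d-2)$-face vertex sums all vanish is bounded by $h^2$ times its second derivatives; this follows from Lemma~\ref{lem:p1-zero-face-sums} in dimension $d-1$ plus scaling.
\item The kernel argument shows only that $|\cdot|_{2,h}$ is a norm, with no uniform constant. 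The uniform Poincar\'e--Friedrichs bound you need to reach $\|\cdot\|_{2,h}$ follows from the companion bounds together with a Poincar\'e--Friedrichs inequality for $\nabla_h w_h$. The face averages of $\nabla_h w_h$ are continuous and vanish on $\partial\Omega$, so that inequality applies.
\end{itemize}

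The genuine gap is in the approximation step. No operator can both map into $V_{h0}$ and reproduce $P_2$ on elements near $\partial\Omega$. Every $v_h\in V_{h0}$ has zero vertex sums on boundary $(d-2)$-faces, so even $p\equiv1$ cannot be reproduced there.

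In the paper these two properties belong to different operators:
\begin{itemize}
\item $\widetilde\Pi_h$ reproduces $P_2$, but it uses patch functions centred at $\alpha\notin\IKhint$, which violate the boundary conditions.
\item $\Pi_{h0}$ discards those patch functions and so loses reproduction on a strip two element layers wide.
\end{itemize}
Extending $u$ by zero does not rescue the argument. The zero extension is only $H^2$ across $\partial\Omega$, because the normal second derivative of $u$ need not vanish there. Bramble--Hilbert on patches meeting $\partial\Omega$ then gives only $Ch^{2-k}|u|_{2,S_h}$, where $S_h$ is a boundary strip of width $O(h)$. That is $O(h^{1/2})$ in $|\cdot|_{2,h}$, and the result is still not in $V_{h0}$.

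The missing idea is the boundary-layer estimate behind Theorem~\ref{thm:pi-h0-error}:
\begin{itemize}
\item Apply $\widetilde\Pi_h$ to a Sobolev extension $\mathscr Xu\in H^3$ rather than to the zero extension.
\item Bound the discarded coefficients via Lemma~\ref{lem:boundary-trace}: $|\lambda_{K_\alpha}(p)|\le Ch_T^{3-d/2}|\mathscr Xu|_{3,\Delta_T}$ for $\alpha\in\mathcal A_T$, where $p$ is the local quadratic approximation.
\end{itemize}
This bound rests on $\lambda_{K_\alpha}=r_{K_\alpha}$ on $P_2$. Expanded at a boundary vertex $a_\alpha$, $r_{K_\alpha}(p)$ involves only $p$, $\nabla p$ and mixed second derivatives. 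The pure second derivatives, whose traces need not vanish for $u\in H_0^2(\Omega)$, cancel exactly. The remaining quantities approximate traces that do vanish, so trace and inverse estimates give the $O(h^{3-d/2})$ size. Without this cancellation, your argument controls the boundary strip only to order $h^{1/2}$, and the optimal rate in \eqref{eq:4.6} is lost.
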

\begin{proof} The continuity and homogeneous boundary conditions in
\eqref{ncp2-space-homogeneous} imply that $|\cdot|_{2,h}$ is a norm on
$V_{h0}$. 
Apply the second Strang lemma  \cite{brennerscott} to the discrete problem
\eqref{eq:discrete-ncp2}. The approximation term is controlled by
Theorem~\ref{thm:pi-h0-error} and the consistency term by
Theorem~\ref{thm:consistency-error}.
\end{proof}

\subsection{Numerical experiment in three dimensions}
\label{sec:numerical-example}

This subsection presents a three-dimensional numerical experiment to verify the
convergence of the proposed nonconforming $P_2$ method. Consider the
biharmonic problem \eqref{biharmonic} on
\[
\Omega=(0,2)\times(-1,1)\times(0,1).
\]
The exact solution is chosen as
\[
u(x,y,z)
=
x^2(2-x)^2
(y+1)^2(1-y)^2
z^2(1-z)^2,
\]
and the right-hand side is defined by $f=\Delta^2u$. Clearly,
$u\in H_0^2(\Omega)$ and is sufficiently smooth.

The initial rectangular mesh, shown in
Figure~\ref{fig:initial-3d-mesh}, consists of
$N_1\times N_2\times N_3=4\times6\times5$ elements.
Each refinement bisects every element in all three coordinate directions.
\begin{figure}[htbp]
    \centering
\includegraphics[width=0.45\textwidth]{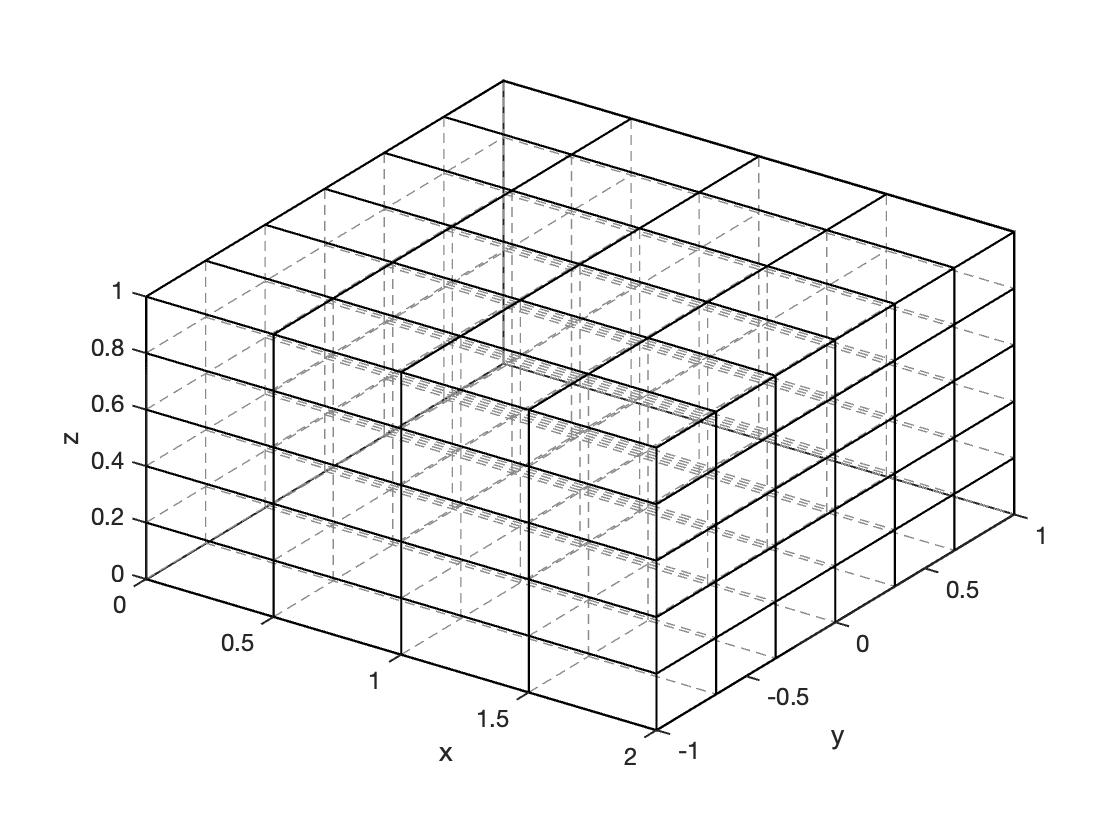}
    \caption{Initial rectangular mesh of
    $\Omega=(0,2)\times(-1,1)\times(0,1)$.}
    \label{fig:initial-3d-mesh}
\end{figure}
By Theorem~\ref{thm:dim-theory}, the number of degrees of freedom is
\[
\dim V_{h0}=(N_1-2)(N_2-2)(N_3-2).
\]

The errors are measured in the $L^2$ norm, the broken $H^1$ norm, and
the broken $H^2$ seminorm,
\[
\|v\|_{1,h}^2
:=
\sum_{K\in\Kh}\|v\|_{1,K}^2,
\qquad
|v|_{2,h}^2
:=
\sum_{K\in\Kh}|v|_{2,K}^2.
\]
The numerical results are reported in Table~\ref{tab:3d-convergence}.

\begin{table}[htbp]
\centering
\caption{Errors and convergence rates for the three-dimensional biharmonic
problem.}
\label{tab:3d-convergence}
\begin{tabular}{cccccccccc}
\hline
$N_1$ & $N_2$ & $N_3$ & $\dim V_{h0}$
& $\|u-u_h\|_{0,\Omega}$ & rate
& $\|u-u_h\|_{1,h}$ & rate
& $|u-u_h|_{2,h}$ & rate \\
\hline
4  & 6  & 5  & 24
& $4.8050\times10^{-3}$ & --
& $2.9750\times10^{-2}$ & --
& $4.8862\times10^{-1}$ & -- \\
8  & 12 & 10 & 480
& $1.3139\times10^{-3}$ & 1.871
& $7.8250\times10^{-3}$ & 1.927
& $2.4256\times10^{-1}$ & 1.010 \\
16 & 24 & 20 & 5544
& $3.3185\times10^{-4}$ & 1.985
& $1.9719\times10^{-3}$ & 1.989
& $1.2074\times10^{-1}$ & 1.006 \\
32 & 48 & 40 & 52440
& $8.3148\times10^{-5}$ & 1.997
& $4.9389\times10^{-4}$ & 1.997
& $6.0291\times10^{-2}$ & 1.002 \\
\hline
\end{tabular}
\end{table}

The numerical results show first-order convergence in the broken $H^2$
seminorm, in agreement with the estimate in
Theorem~\ref{thm:error}. For this smooth solution, the
$L^2$ and broken $H^1$ errors exhibit approximately second-order
convergence. These results confirm the predicted convergence of the
three-dimensional nonconforming $P_2$ method and also provide a numerical
verification of the patch-basis implementation.

\section{Local unisolvence and patch basis}\label{sec:local-unisolvence-patch}
This section develops the local results used to construct a basis of $V_{h0}$. The first two subsections establish unisolvence and compatibility results for quadratic polynomials on a single $d$-dimensional rectangle. The third subsection introduces the $3^d$-element patches and constructs the corresponding patch functions. The final subsection proves the global basis theorem.

\subsection{Unisolvence for the enriched space $P^+$ on a $d$-dimensional rectangle}

This subsection establishes the local algebraic characterization of $P_2(K)$ on a $d$-dimensional rectangle $K$. The enriched space $P_2(K)+\operatorname{span}\{x_i^3\}_{i=1}^d$ is considered first and shown to be unisolvent with respect to the functionals in $\mathcal N^+$ defined below. Compatibility conditions are then derived to characterize functions in $P_2(K)$.

Without loss of generality, consider $K=K_{\alpha}=\prod_{i=1}^{d}[x_{i,1},x_{i,2}]$ with $\alpha=(1,1,\ldots,1)$. The arguments below apply to every element $K_{\alpha}\in\Kh$.
 Since only one interval is used in each direction, the following simplified notation is adopted throughout this section. 
For $1\leq i\leq d$ and $k\in\{1,2\}$, set the $(d-1)$-face $F^{i,k}:=\{x\in K:x_i=x_{i,k}\}$. For $1\leq i\ne j\leq d$ and $k,l\in\{1,2\}$, set the $(d-2)$-face $E^{i,k;j,l}:=F^{i,k}\cap F^{j,l}$. Note that $E^{i,k;j,l}=E^{j,l;i,k}$, so the ordering is immaterial. Recall from Subsection~\ref{subsec:mesh} that $\Sigma_{F^{i,k}}$ and $\Sigma_{E^{i,k;j,l}}$ denote the corresponding vertex-sum functionals. A $(d-2)$-face vertex sum is said to be \emph{on} a $(d-1)$-face if the corresponding $(d-2)$-face is contained in it. Moreover, for $j\ne i$ and $k\in\{1,2\}$,
\begin{equation}
\label{eq:face-sum-edge-decomposition}
\Sigma_{F^{i,k}}(v)
=\Sigma_{E^{i,k;j,1}}(v)+\Sigma_{E^{i,k;j,2}}(v).
\end{equation}
This identity will be used repeatedly below.

For $1\leq i\leq d-1$, define
$\mathcal N_i:=\left\{\Sigma_{E^{i,1;i+1,2}}\right\}
\cup\bigcup_{j=i+1}^{d}\left\{\Sigma_{E^{i,1;j,1}}\right\}$,
and define
$\mathcal N_d:=\left\{\Sigma_{E^{1,2;d,1}}\right\}$. 
The functional set $\mathcal N$ is
$\mathcal N:=\left\{\Sigma_{E^{1,2;2,2}}\right\}
\cup\bigcup_{i=1}^{d}\mathcal N_i$. Note that \eqref{eq:face-sum-edge-decomposition} implies
\begin{align}\label{eq:span}
\operatorname{span}\bigcup_{i=1}^{d}\mathcal N_i
=
\operatorname{span}
\bigcup_{i=1}^{d}
\left(
\left\{\Sigma_{F^{i,1}}\right\}
\cup
\bigcup_{j=i+1}^{d}
\left\{\Sigma_{E^{i,1;j,1}}\right\}
\right).
\end{align}
For $d\geq2$, define the enriched space $P^{+}:=P_2(K)+\operatorname{span}\{x_i^3\}_{i=1}^{d}$, with $\dim P^{+}=(d+1)(d+2)/2+d$. Define the functional set $\mathcal N^{+}:=\mathcal N\cup\{\fint_{F^{i,k}}\partial v/\partial x_i\,ds\}_{1\leq i\leq d,\ k\in\{1,2\}}$. Here and below, $\fint_{F^{i,k}}\partial v/\partial x_i\,ds$ is called the coordinate-derivative average on $F^{i,k}$; up to the orientation sign, it is the normal-derivative average introduced in Subsection~\ref{subsec:mesh}.
\begin{remark}\label{rek:dimNplus}
The number of functionals in $\mathcal N^{+}$ equals
\[
    \#\mathcal N^{+}
    =\#\mathcal N+2d
    =1+\sum_{i=1}^{d}(d+1-i)+2d
    =\frac{d^2+d+2}{2}+2d
    =\dim P^{+}.
\]
\end{remark}
The proof of unisolvence begins with a propagation property of the vertex-sum functionals in $\mathcal N$: their vanishing forces the vertex sums on all $(d-2)$-faces to vanish.

\begin{lemma}
\label{lem:rm-face-sums}
For any function $v$ for which the vertex values are defined, if $\mathcal N(v)=0$, then
\[
    \Sigma_{E^{i,k;j,l}}(v)=0
    \qquad
    \text{for }1\leq i\ne j\leq d,\quad k,l\in\{1,2\}.
\]
\end{lemma}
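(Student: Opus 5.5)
The plan is to reduce everything to linear bookkeeping with the splitting identity \eqref{eq:face-sum-edge-decomposition}. That identity says the $(d-1)$-face sum $\Sigma_{F^{i,k}}(v)$ equals $\Sigma_{E^{i,k;j,1}}(v)+\Sigma_{E^{i,k;j,2}}(v)$ for \emph{every} $j\ne i$. We first show that all lower face sums $\Sigma_{F^{i,1}}(v)$ vanish. Then we use the identity in two directions to kill the mixed $(d-2)$-face sums. Finally we show that all upper-upper sums $\Sigma_{E^{i,2;j,2}}(v)$ share one common value, which the extra functional $\Sigma_{E^{1,2;2,2}}$ forces to be zero.

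\emph{Step 1 (lower faces).} Let $1\le i\le d-1$. The set $\mathcal N_i$ contains $\Sigma_{E^{i,1;i+1,1}}$ and $\Sigma_{E^{i,1;i+1,2}}$, so \eqref{eq:face-sum-edge-decomposition} with $j=i+1$ gives $\Sigma_{F^{i,1}}(v)=0$. For $i=d$, we take $j=1$ and get $\Sigma_{F^{d,1}}(v)=\Sigma_{E^{1,1;d,1}}(v)+\Sigma_{E^{1,2;d,1}}(v)$. The first term vanishes because $\Sigma_{E^{1,1;d,1}}\in\mathcal N_1$. The second vanishes because $\Sigma_{E^{1,2;d,1}}$ makes up $\mathcal N_d$. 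Hence $\Sigma_{F^{i,1}}(v)=0$ for all $i$. This is essentially the content of \eqref{eq:span}.

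\emph{Step 2 (mixed faces).} Let $i<j$. Since $\Sigma_{E^{i,1;j,1}}\in\mathcal N_i$, the lower-lower sum vanishes. Applying \eqref{eq:face-sum-edge-decomposition} to $F^{i,1}$ in direction $j$ gives $\Sigma_{E^{i,1;j,2}}(v)=\Sigma_{F^{i,1}}(v)-\Sigma_{E^{i,1;j,1}}(v)=0$. Applying it to $F^{j,1}$ in direction $i$ gives $\Sigma_{E^{i,2;j,1}}(v)=\Sigma_{F^{j,1}}(v)-\Sigma_{E^{i,1;j,1}}(v)=0$. By the symmetry $E^{i,k;j,l}=E^{j,l;i,k}$, every $(d-2)$-face sum with at least one lower index now vanishes.

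\emph{Step 3 (upper-upper faces; the only non-mechanical point).} For each $i$ and each $j\ne i$, \eqref{eq:face-sum-edge-decomposition} on $F^{i,2}$ together with Step 2 gives
\[
\Sigma_{E^{i,2;j,2}}(v)=\Sigma_{F^{i,2}}(v)=:s_i .
\]
So $s_i=\Sigma_{E^{i,2;j,2}}(v)=\Sigma_{E^{j,2;i,2}}(v)=s_j$ for all $i\ne j$. Thus all $s_i$ equal a common value $s$, and every upper-upper sum equals $s$. The remaining functional $\Sigma_{E^{1,2;2,2}}\in\mathcal N$ yields $s=0$.

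The main care needed is checking that the index conventions cover all cases, including $d=2$. There $\mathcal N_1=\{\Sigma_{E^{1,1;2,2}},\Sigma_{E^{1,1;2,1}}\}$ and $\mathcal N_d=\{\Sigma_{E^{1,2;2,1}}\}$, and the same three steps apply verbatim.
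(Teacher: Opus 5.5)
Your proof is correct and follows essentially the same route as the paper: you first show the lower face sums vanish (the content of \eqref{eq:span}), then deduce via \eqref{eq:face-sum-edge-decomposition} that every $(d-2)$-face sum with at least one lower index vanishes, and finally use $\Sigma_{E^{1,2;2,2}}(v)=0$ to eliminate the upper-upper sums. Your observation that all $\Sigma_{F^{i,2}}(v)$ share a common value is a compact rephrasing of the paper's chain $F^{1,2}\to E^{1,2;i,2}\to F^{i,2}$.
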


\begin{proof}
\setlength{\abovedisplayskip}{5pt plus 2pt minus 2pt}
\setlength{\belowdisplayskip}{5pt plus 2pt minus 2pt}
\setlength{\abovedisplayshortskip}{3pt plus 2pt}
\setlength{\belowdisplayshortskip}{3pt plus 2pt minus 1pt}

\begin{figure}[H]
\centering
\begin{tikzpicture}[
  x=1cm,y=1cm,
  direct/.style={black,very thick},
  inherited/.style={red!75!black,very thick},
  deduced/.style={blue!75!black,very thick},
  facedirect/.style={fill=black!9},
  facededuced/.style={fill=blue!10},
  flow/.style={->,>=stealth,semithick},
  cubelabel/.style={font=\tiny,fill=white,inner sep=0.5pt},
  edgelabel/.style={
    font=\tiny,
    scale=0.68,
    transform shape,
    inner sep=0.2pt
  },
  facelabel/.style={font=\tiny},
  facetitle/.style={font=\scriptsize\bfseries}
]

\coordinate (A)  at (0.00,0.15);
\coordinate (B)  at (1.65,0.15);
\coordinate (C)  at (1.65,1.80);
\coordinate (D)  at (0.00,1.80);
\coordinate (Ap) at (0.70,0.70);
\coordinate (Bp) at (2.35,0.70);
\coordinate (Cp) at (2.35,2.35);
\coordinate (Dp) at (0.70,2.35);

\draw[semithick] (A)--(B)--(C)--(D)--cycle;
\draw[semithick] (B)--(Bp)--(Cp)--(C);
\draw[semithick] (D)--(Dp)--(Cp);
\draw[dashed,gray] (A)--(Ap)--(Bp);
\draw[dashed,gray] (Ap)--(Dp);

\node[cubelabel] at (0.82,1.10) {$F^{1,1}$};
\node[cubelabel] at (2.02,1.34) {$F^{2,2}$};
\node[cubelabel] at (1.16,2.08) {$F^{3,2}$};
\node[cubelabel] at (0.25,1.38) {$F^{2,1}$};
\node[cubelabel] at (1.47,1.55) {$F^{1,2}$};
\node[cubelabel] at (1.05,0.47) {$F^{3,1}$};


\begin{scope}[shift={(4.35,1.65)}]
  \node[facetitle] at (0,1.13) {(1)};

  \fill[facedirect] (-0.80,-0.80) rectangle (0.80,0.80);

  \draw[direct]  (-0.82,-0.82)--( 0.82,-0.82);
  \draw[deduced] (-0.82, 0.82)--( 0.82, 0.82);
  \draw[direct]  (-0.82,-0.82)--(-0.82, 0.82);
  \draw[deduced] ( 0.82,-0.82)--( 0.82, 0.82);

  \node[facelabel,black] at (0,0)
    {$\Sigma_{F^{1,1}}(v)=0$};

  \node[edgelabel,black,below] at (0,-0.82)
    {$\Sigma_{E^{1,1;2,1}}(v)=0$};

  \node[edgelabel,blue!75!black,above] at (0,0.82)
    {$\Sigma_{E^{1,1;2,2}}(v)=0$};

  \node[edgelabel,black,rotate=90,above] at (-0.82,0)
    {$\Sigma_{E^{1,1;3,1}}(v)=0$};

  \node[edgelabel,blue!75!black,rotate=90,below] at (0.82,0)
    {$\Sigma_{E^{1,1;3,2}}(v)=0$};
\end{scope}

\begin{scope}[shift={(6.85,1.65)}]
  \node[facetitle] at (0,1.13) {(2)};

  \fill[facedirect] (-0.80,-0.80) rectangle (0.80,0.80);

  \draw[direct]  (-0.82,-0.82)--( 0.82,-0.82);
  \draw[deduced] (-0.82, 0.82)--( 0.82, 0.82);
  \draw[direct]  (-0.82,-0.82)--(-0.82, 0.82);
  \draw[deduced] ( 0.82,-0.82)--( 0.82, 0.82);

  \node[facelabel,black] at (0,0)
    {$\Sigma_{F^{2,1}}(v)=0$};

  \node[edgelabel,black,below] at (0,-0.82)
    {$\Sigma_{E^{1,1;2,1}}(v)=0$};

  \node[edgelabel,blue!75!black,above] at (0,0.82)
    {$\Sigma_{E^{1,2;2,1}}(v)=0$};

  \node[edgelabel,black,rotate=90,above] at (-0.82,0)
    {$\Sigma_{E^{2,1;3,1}}(v)=0$};

  \node[edgelabel,blue!75!black,rotate=90,below] at (0.82,0)
    {$\Sigma_{E^{2,1;3,2}}(v)=0$};
\end{scope}

\begin{scope}[shift={(9.35,1.65)}]
  \node[facetitle] at (0,1.13) {(3)};

  \fill[facedirect] (-0.80,-0.80) rectangle (0.80,0.80);

  \draw[direct]  (-0.82,-0.82)--( 0.82,-0.82);
  \draw[deduced] (-0.82, 0.82)--( 0.82, 0.82);
  \draw[direct]  (-0.82,-0.82)--(-0.82, 0.82);
  \draw[deduced] ( 0.82,-0.82)--( 0.82, 0.82);

  \node[facelabel,black] at (0,0)
    {$\Sigma_{F^{3,1}}(v)=0$};

  \node[edgelabel,black,below] at (0,-0.82)
    {$\Sigma_{E^{1,1;3,1}}(v)=0$};

  \node[edgelabel,blue!75!black,above] at (0,0.82)
    {$\Sigma_{E^{1,2;3,1}}(v)=0$};

  \node[edgelabel,black,rotate=90,above] at (-0.82,0)
    {$\Sigma_{E^{2,1;3,1}}(v)=0$};

  \node[edgelabel,blue!75!black,rotate=90,below] at (0.82,0)
    {$\Sigma_{E^{2,2;3,1}}(v)=0$};
\end{scope}


\begin{scope}[shift={(4.35,-0.55)}]
  \node[facetitle] at (0,1.13) {(4)};

  \fill[facededuced] (-0.80,-0.80) rectangle (0.80,0.80);

  \draw[inherited] (-0.82,-0.82)--( 0.82,-0.82);
  \draw[direct]    (-0.82, 0.82)--( 0.82, 0.82);
  \draw[inherited] (-0.82,-0.82)--(-0.82, 0.82);
  \draw[deduced]   ( 0.82,-0.82)--( 0.82, 0.82);

  \node[facelabel,blue!75!black] at (0,0)
    {$\Sigma_{F^{1,2}}(v)=0$};

  \node[edgelabel,red!75!black,below] at (0,-0.82)
    {$\Sigma_{E^{1,2;2,1}}(v)=0$};

  \node[edgelabel,black,above] at (0,0.82)
    {$\Sigma_{E^{1,2;2,2}}(v)=0$};

  \node[edgelabel,red!75!black,rotate=90,above] at (-0.82,0)
    {$\Sigma_{E^{1,2;3,1}}(v)=0$};

  \node[edgelabel,blue!75!black,rotate=90,below] at (0.82,0)
    {$\Sigma_{E^{1,2;3,2}}(v)=0$};
\end{scope}

\begin{scope}[shift={(6.85,-0.55)}]
  \node[facetitle] at (0,1.13) {(5)};

  \fill[facededuced] (-0.80,-0.80) rectangle (0.80,0.80);

  \draw[inherited] (-0.82,-0.82)--( 0.82,-0.82);
  \draw[direct]    (-0.82, 0.82)--( 0.82, 0.82);
  \draw[inherited] (-0.82,-0.82)--(-0.82, 0.82);
  \draw[deduced]   ( 0.82,-0.82)--( 0.82, 0.82);

  \node[facelabel,blue!75!black] at (0,0)
    {$\Sigma_{F^{2,2}}(v)=0$};

  \node[edgelabel,red!75!black,below] at (0,-0.82)
    {$\Sigma_{E^{1,1;2,2}}(v)=0$};

  \node[edgelabel,black,above] at (0,0.82)
    {$\Sigma_{E^{1,2;2,2}}(v)=0$};

  \node[edgelabel,red!75!black,rotate=90,above] at (-0.82,0)
    {$\Sigma_{E^{2,2;3,1}}(v)=0$};

  \node[edgelabel,blue!75!black,rotate=90,below] at (0.82,0)
    {$\Sigma_{E^{2,2;3,2}}(v)=0$};
\end{scope}

\begin{scope}[shift={(9.35,-0.55)}]
  \node[facetitle] at (0,1.13) {(6)};

  \fill[facededuced] (-0.80,-0.80) rectangle (0.80,0.80);

  \draw[inherited] (-0.82,-0.82)--( 0.82,-0.82);
  \draw[inherited] (-0.82, 0.82)--( 0.82, 0.82);
  \draw[inherited] (-0.82,-0.82)--(-0.82, 0.82);
  \draw[inherited] ( 0.82,-0.82)--( 0.82, 0.82);

  \node[facelabel,blue!75!black] at (0,0)
    {$\Sigma_{F^{3,2}}(v)=0$};

  \node[edgelabel,red!75!black,below] at (0,-0.82)
    {$\Sigma_{E^{1,1;3,2}}(v)=0$};

  \node[edgelabel,red!75!black,above] at (0,0.82)
    {$\Sigma_{E^{1,2;3,2}}(v)=0$};

  \node[edgelabel,red!75!black,rotate=90,above] at (-0.82,0)
    {$\Sigma_{E^{2,1;3,2}}(v)=0$};

  \node[edgelabel,red!75!black,rotate=90,below] at (0.82,0)
    {$\Sigma_{E^{2,2;3,2}}(v)=0$};
\end{scope}



\draw[direct] (-0.40,-2.38)--(0.15,-2.38);
\node[font=\tiny,anchor=west] at (0.22,-2.38)
  {first step by $\mathcal N(v)=0$};

\draw[inherited] (3.4,-2.38)--(3.95,-2.38);
\node[font=\tiny,anchor=west] at (4.0,-2.38)
  {obtained in a previous step};

\draw[deduced] (7.90,-2.38)--(8.45,-2.38);
\node[font=\tiny,anchor=west] at (8.52,-2.38)
  {deduced from \eqref{eq:face-sum-edge-decomposition}};

\end{tikzpicture}

\caption{Illustration of the propagation argument in Lemma~\ref{lem:rm-face-sums} for $d=3$.}
\label{fig:three-dimensional-face-sum-propagation}
\end{figure}
Suppose that $\mathcal N(v)=0$. For the faces with $k=1$, \eqref{eq:span}  shows that $\Sigma_{F^{i,1}}(v)=0
$ for $1\leq i\leq d$ and $\Sigma_{E^{i,1;j,1}}(v)=0$ for $ 1\le i<j\le d$.  This and \eqref{eq:face-sum-edge-decomposition} show  \[ \Sigma_{E^{i,1;j,l}}(v) = \Sigma_{E^{j,l;i,1}}(v)=0, \qquad 1\le i\ne j\le d,\quad l\in\{1,2\}. \] For the faces with $k=2$, the previous result $\Sigma_{E^{1,2;2,1}}(v)=0$ plus $\Sigma_{E^{1,2;2,2}}(v)=0$ and \eqref{eq:face-sum-edge-decomposition} lead to $\Sigma_{F^{1,2}}(v)=0$, and hence \[ \Sigma_{E^{1,2;j,2}}(v)=0, \qquad 2\le j\le d. \] For $2\le i\le d$, \[ \Sigma_{E^{1,1;i,2}}(v) = \Sigma_{E^{1,2;i,2}}(v)=0. \]This and  \eqref{eq:face-sum-edge-decomposition}  give  $\Sigma_{F^{i,2}}(v)=0$. Together with $\Sigma_{E^{i,2;j,1}}(v)=0$ for $ j\ne i$, this yields \[ \Sigma_{E^{i,2;j,2}}(v)=0, \qquad 1\le i\ne j\le d. \] Therefore, \[ \Sigma_{E^{i,k;j,l}}(v)=0, \qquad 1\le i\ne j\le d,\quad k,l\in\{1,2\}. \] This concludes the proof. \end{proof}
\begin{remark}
Figure~\ref{fig:three-dimensional-face-sum-propagation} illustrates the argument in the proof of Lemma \ref{lem:rm-face-sums} for $d=3$. 
\end{remark}
 
The following two lemmas present elementary properties of affine functions on rectangles.
\begin{lemma}
\label{lem:p1-face-sum-vertex}
Let $d\geq1$, let $K$ be a $d$-dimensional rectangle, and let $x_K$ denote the centroid of $K$. Then, for any $v\in P_1(K)$,
\[
    \sum_{z\in\mathcal{V}(K)}v(z)=2^d v(x_K),
\]
where $\mathcal{V}(K)$ denotes the set of vertices of $K$.
\end{lemma}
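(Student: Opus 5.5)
The identity is linear in $v$, so it suffices to check it on the basis $\{1,x_1,\ldots,x_d\}$ of $P_1(K)$. Equivalently, write $v(x)=v(x_K)+\nabla v\cdot(x-x_K)$. Since $\nabla v$ is constant, everything reduces to showing that the vertices of $K$ are symmetric about the centroid:
\[
\sum_{z\in\mathcal V(K)}(z-x_K)=0 .
\]

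To see this, let $K=\prod_{i=1}^d[a_i,b_i]$, so that $x_K=\bigl((a_1+b_1)/2,\ldots,(a_d+b_d)/2\bigr)$ and the vertices are the $2^d$ points whose $i$-th coordinate lies in $\{a_i,b_i\}$. Consider the map $z\mapsto 2x_K-z$. In each coordinate it sends $a_i\mapsto b_i$ and $b_i\mapsto a_i$, so it is a bijection of $\mathcal V(K)$ onto itself. Summing over $\mathcal V(K)$ therefore gives
\[
\sum_{z}(z-x_K)=\sum_{z}(x_K-z),
\]
hence both sums vanish. Alternatively, one can count coordinatewise: for fixed $i$, exactly $2^{d-1}$ vertices have $z_i=a_i$ and $2^{d-1}$ have $z_i=b_i$, so $\sum_z z_i=2^{d-1}(a_i+b_i)=2^d(x_K)_i$.

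Combining the two steps,
\[
\sum_z v(z)=2^d v(x_K)+\nabla v\cdot\sum_z(z-x_K)=2^d v(x_K).
\]
The argument holds for every $d\ge1$; for $d=1$ the vertex set is $\{a_1,b_1\}$ and the identity is the trapezoid/midpoint identity for affine functions. The computation is elementary, and the only step needing care is the vertex counting (or, equivalently, the central symmetry).
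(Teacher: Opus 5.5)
Your proof is correct and follows essentially the same route as the paper. The paper simply notes that $x_K=2^{-d}\sum_{z\in\mathcal{V}(K)}z$ and that affine functions commute with such averages; your central-symmetry (or coordinate-counting) argument is an explicit verification of that vertex-average identity.
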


\begin{proof}
The conclusion follows directly from
$x_K=2^{-d}\sum_{z\in\mathcal{V}(K)}z$.
\end{proof}

\begin{lemma}
\label{lem:p1-zero-face-sums}
Let $d\geq1$, and let $K$ be a $d$-dimensional rectangle. For any $v\in P_1(K)$, if
\[
    \Sigma_{F^{i,k}}(v)=0
    \qquad
    \text{for all }1\leq i\leq d,\ k\in\{1,2\},
\]
then $v= 0$.
\end{lemma}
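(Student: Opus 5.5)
The plan is to apply Lemma~\ref{lem:p1-face-sum-vertex} to each $(d-1)$-face, and then evaluate the affine function at pairs of opposite face centroids. Each face $F^{i,k}$ is itself a $(d-1)$-dimensional rectangle with $2^{d-1}$ vertices. The restriction of $v\in P_1(K)$ to $F^{i,k}$ is affine on that face. So Lemma~\ref{lem:p1-face-sum-vertex}, applied in dimension $d-1$, gives
\[
\Sigma_{F^{i,k}}(v)=2^{d-1}v(x_{F^{i,k}}),
\]
where $x_{F^{i,k}}$ is the centroid of $F^{i,k}$. The hypothesis therefore says that $v$ vanishes at all $2d$ face centroids. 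For $d=1$ the faces are single points and the identity is trivial, so the argument is uniform in $d\geq1$.

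Next, write $v(x)=c_0+\sum_{m=1}^d c_m(x_m-\bar x_m)$, where $\bar x=x_K$ is the centroid of $K$. The centroid of $F^{i,k}$ is $\bar x$ with its $i$-th coordinate replaced by $x_{i,k}$. Hence
\[
v(x_{F^{i,k}})=c_0+c_i(x_{i,k}-\bar x_i)=c_0\pm \tfrac{h_i}{2}c_i,
\]
with sign $-$ for $k=1$ and $+$ for $k=2$, where $h_i=x_{i,2}-x_{i,1}>0$. Subtracting the two equations for $k=1,2$ gives $h_ic_i=0$, so $c_i=0$ for every $i$. Adding them gives $c_0=0$. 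Therefore $v=0$.

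There is no real obstacle. The only point needing care is that the centroid of $F^{i,k}$ is the average of its vertices, which holds because $F^{i,k}$ is a product of intervals; that is exactly the setting of Lemma~\ref{lem:p1-face-sum-vertex}.
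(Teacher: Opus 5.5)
Your proposal is correct and follows essentially the same argument as the paper: both apply Lemma~\ref{lem:p1-face-sum-vertex} in dimension $d-1$ to obtain $v(x_{F^{i,k}})=0$ at all $2d$ face centroids, then subtract the values at opposite centroids to get $c_i=0$ and use a single centroid value to get $c_0=0$. Writing the affine function centered at $x_K$ and adding the two equations instead of evaluating at one centroid is only a cosmetic difference.
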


\begin{proof}
Recall that $x_{F^{i,k}}$ denotes the midpoint of $F^{i,k}$. Applying 
Lemma~\ref{lem:p1-face-sum-vertex} in dimension $d-1$ shows 
\[
v(x_{F^{i,k}})=0,
\qquad 1\le i\le d,\quad k\in\{1,2\}.
\]
Suppose
$
v=c_0+\sum_{i=1}^d c_i x_i
$. Note that
$
x_{F^{i,2}}-x_{F^{i,1}}=(x_{i,2}-x_{i,1}) e_i 
$
lead to
\[
0
=
v(x_{F^{i,2}})-v(x_{F^{i,1}})
=
c_i (x_{i,2}-x_{i,1}) ,
\qquad 1\le i\le d.
\]
This shows $c_i=0$ for all $1\le i\le d$. Since
$v(x_{F^{1,1}})=0$, it follows that $c_0=0$. This concludes the proof.
\end{proof}
The previous lemmas provide the ingredients for establishing the unisolvence of the enriched space $P^+$ with respect to $\mathcal N^+$.

\begin{theorem}
\label{thm:rm-unisolvence}
For $d\geq2$, the enriched space $P^{+}$ is uniquely determined by $\mathcal N^{+}$.
\end{theorem}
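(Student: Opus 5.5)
Since Remark~\ref{rek:dimNplus} gives $\#\mathcal N^{+}=\dim P^{+}$, it suffices to show injectivity: if $v\in P^{+}$ and $\mathcal N^{+}(v)=0$, then $v=0$. First apply Lemma~\ref{lem:rm-face-sums}. From $\mathcal N(v)=0$ it yields $\Sigma_{E^{i,k;j,l}}(v)=0$ for all $i\ne j$ and all $k,l$. By \eqref{eq:face-sum-edge-decomposition}, all face sums $\Sigma_{F^{i,k}}(v)$ then vanish as well. The coordinate-derivative averages on the $2d$ faces $F^{i,k}$ also vanish by assumption.

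Next I would work in centered coordinates. Translate $K$ to $\prod_i[-h_i,h_i]$; this map preserves $P^{+}$, since a shifted cube $x_i^3$ differs from $x_i^3$ by a quadratic. Write
\[
v=\sum_i a_i x_i^3+\sum_i b_i x_i^2+\sum_{i<j}c_{ij}x_ix_j+\sum_i l_i x_i+c_0 .
\]
The key computation is the vertex sum over a $(d-2)$-face with $x_i=s_i$ and $x_j=s_j$ fixed, where $s_i\in\{\pm h_i\}$ and $s_j\in\{\pm h_j\}$. Summing over the $2^{d-2}$ choices of signs of the free coordinates $x_m$ removes every term that is odd in some free variable: $x_m^3$, $x_m$, and $x_mx_n$, $x_mx_i$. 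Hence
\[
2^{2-d}\Sigma_{E}(v)=a_is_i^3+a_js_j^3+b_ih_i^2+b_jh_j^2+\sum_{m\ne i,j}b_mh_m^2+c_{ij}s_is_j+l_is_i+l_js_j+c_0 .
\]

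These sums vanish for all four sign pairs $(s_i,s_j)$. The combination odd in both signs gives $c_{ij}h_ih_j=0$, so $c_{ij}=0$. The combination odd in $s_i$ only gives $a_ih_i^2+l_i=0$. The even part gives $c_0+\sum_m b_mh_m^2=0$.

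Then I would use the derivative averages. On $F^{i,k}$, where $x_i=\pm h_i$, we have
\[
\partial_i v=3a_ix_i^2+2b_ix_i+\sum_{j\ne i}c_{ij}x_j+l_i .
\]
The mixed terms average to zero over the centered face, so the average equals $3a_ih_i^2\pm2b_ih_i+l_i$. Vanishing on both faces gives $b_i=0$ and $3a_ih_i^2+l_i=0$. Combined with $a_ih_i^2+l_i=0$, this yields $a_i=l_i=0$, and then the even-part relation gives $c_0=0$. Hence $v=0$.

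The only delicate point is the bookkeeping in the vertex-sum formula, namely which monomials survive the sum over free vertices. For $d=2$ there are no free coordinates and $E$ is a single vertex, and the same formula holds. Alternatively, once the cubic and quadratic parts are eliminated, the remaining affine part could be dispatched by Lemma~\ref{lem:p1-zero-face-sums}, but the direct computation already covers it.
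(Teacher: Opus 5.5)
Your proof is correct. Its outline matches the paper's: reduce to injectivity via Remark~\ref{rek:dimNplus}, use Lemma~\ref{lem:rm-face-sums} to make all $(d-2)$-face vertex sums vanish, extract the mixed terms and one relation per direction from these sums, and then use the $2d$ derivative averages and the constant.

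The difference is in the middle step. The paper never computes vertex sums of monomials. It observes that $\delta_i v(x)=v(x)-v\bigl(x+(x_{i,2}-x_{i,1})e_i\bigr)$ is affine on $F^{i,1}$ and has zero vertex sum on each $(d-2)$-face of $F^{i,1}$. Lemma~\ref{lem:p1-zero-face-sums} in dimension $d-1$ then gives $\delta_i v\equiv0$, which yields at once the vanishing of all mixed coefficients and $p_i(x_{i,1})=p_i(x_{i,2})$. The constant is removed at the end by a single vertex sum once $v=c_0$.

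You instead center $K$, which is legitimate because $P^+$ is translation invariant and the functionals transform accordingly. By parity in the free coordinates and in $(s_i,s_j)$ you read off three explicit relations: $c_{ij}=0$, $a_ih_i^2+l_i=0$, and $c_0+\sum_m b_mh_m^2=0$. Your second relation is exactly the paper's $p_i(-h_i)=p_i(h_i)$, and your third replaces its final step.

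The paper's route is coordinate-free, reuses an already proven lemma, and avoids monomial bookkeeping. Yours is self-contained and fully explicit. Centering also makes the mixed terms drop out of the face averages of $\partial_i v$ without first knowing $c_{ij}=0$, whereas the paper relies on having eliminated them beforehand.
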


\begin{proof}
By the dimension count in Remark \ref{rek:dimNplus}, it suffices to show
that $\mathcal N^+(v)=0$ implies $v=0$. Let
\[
v=\sum_{i=1}^d a_i x_i^3
 +\sum_{1\le i\le j\le d} b_{ij}x_ix_j
 +\sum_{i=1}^d c_i x_i+c_0
\]
with $b_{ij}=b_{ji}$.

Fix $1\le i\le d$ and define, on $F^{i,1}$, $ 
\delta_i v(x)
:=
v(x)-v\bigl(x+(x_{i,2}-x_{i,1})e_i\bigr) 
$. Then $\delta_i v\in P_1(F^{i,1})$. 
Lemma~\ref{lem:rm-face-sums} shows
\[
\Sigma_{E^{i,1;j,l}}(v)
=
\Sigma_{E^{i,2;j,l}}(v)=0,
\qquad j\ne i,\quad l\in\{1,2\}.
\]
This implies that $\delta_i v$ has zero vertex sum on every $(d-2)$-face of
$F^{i,1}$. The application of Lemma~\ref{lem:p1-zero-face-sums}  in dimension
$d-1$ gives
\begin{align}\label{eq:deltav}
\delta_i v=0
\qquad\text{on }F^{i,1}.
\end{align}
This shows
$
b_{ij}=0
$ for any $j\neq i$.
Since $i$ is arbitrary, all mixed terms vanish. Moreover, let
$
p_i(t):=a_i t^3+b_{ii}t^2+c_i t
$ and in fact $v=c_0+\sum_{i=1}^dp_i(x_i)$. Note that \eqref{eq:deltav} shows
\begin{equation}
\label{eq:pi-endpoint-values}
p_i(x_{i,1})=p_i(x_{i,2}),
\qquad 1\le i\le d.
\end{equation}
The two coordinate-derivative average conditions on $F^{i,1}$ and $F^{i,2}$
reduce to
\[
p_i'(x_{i,1})=p_i'(x_{i,2})=0.
\]
Together with \eqref{eq:pi-endpoint-values}, an elementary calculation yields
\[
a_i=b_{ii}=c_i=0,
\qquad 1\le i\le d.
\]
The previous arguments show $v=c_0$. 
This and 
$
\Sigma_{E^{1,1;2,1}}(v)=0
$ 
lead to $c_0=0$. This concludes the proof.
\end{proof}

\subsection{Unisolvence for $P_2(K)$ on a $d$-dimensional rectangle}\label{sec:Ptwouni}
 
It remains to characterize  functions in the subspace $P_2(K)\subset P^+$. The following identity gives a compatibility relation between the $(d-2)$-face vertex sums and the coordinate-derivative averages on the $(d-1)$-faces.

\begin{lemma}
\label{lem:p2-b-a-compatibility}
Let $K$ be a $d$-dimensional rectangle and let $v\in P_2(K)$. Then,
for every $1\leq i\leq d$ and $q\neq i$,
\begin{align}\label{eq:assum}
\displaystyle
\sum_{l=1}^2
\frac{
\Sigma_{E^{i,1;q,l}}(v)-\Sigma_{E^{i,2;q,l}}(v)
}{x_{i,1}-x_{i,2}}
=2^{d-2}\left(
\fint_{F^{i,1}}\frac{\partial v}{\partial x_i}\,ds
+\fint_{F^{i,2}}\frac{\partial v}{\partial x_i}\,ds
\right),
\end{align}
i.e.
\begin{align*}
\frac{
\Sigma_{F^{i,1}}(v)-\Sigma_{F^{i,2}}(v)
}{x_{i,1}-x_{i,2}}
=2^{d-2}\left(
\fint_{F^{i,1}}\frac{\partial v}{\partial x_i}\,ds
+\fint_{F^{i,2}}\frac{\partial v}{\partial x_i}\,ds
\right).
\end{align*}
\end{lemma}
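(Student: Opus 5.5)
The plan is to reduce everything to face-centroid evaluations and a one-dimensional identity for quadratics. First, the two displayed forms are equivalent. By \eqref{eq:face-sum-edge-decomposition}, for each $k$ the sum over $l$ of $\Sigma_{E^{i,k;q,l}}(v)$ equals $\Sigma_{F^{i,k}}(v)$. So it suffices to prove the second form, which no longer involves $q$.

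Next, rewrite both sides in terms of the polynomial restricted to the line through the centroid in direction $e_i$. Set
\[
g(x):=v(x)-v\bigl(x+(x_{i,2}-x_{i,1})e_i\bigr), \qquad x\in F^{i,1}.
\]
For $v\in P_2(K)$, $g$ is affine on $F^{i,1}$, because the quadratic part in the tangential variables cancels. Hence Lemma~\ref{lem:p1-face-sum-vertex} in dimension $d-1$ gives
\[
\Sigma_{F^{i,1}}(v)-\Sigma_{F^{i,2}}(v)=2^{d-1}\,g(x_{F^{i,1}})=2^{d-1}\bigl(v(x_{F^{i,1}})-v(x_{F^{i,2}})\bigr).
\]
Similarly, $\partial v/\partial x_i$ is affine, so its average over the rectangle $F^{i,k}$ equals its value at the centroid:
\[
\fint_{F^{i,k}}\frac{\partial v}{\partial x_i}\,ds=\frac{\partial v}{\partial x_i}(x_{F^{i,k}}).
\]

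Then reduce to one dimension. The points $x_{F^{i,1}}$ and $x_{F^{i,2}}$ lie on the segment $x_K+te_i$. Define $\phi(t)=v(x_K+te_i)$, which is a univariate quadratic, and let $2H=x_{i,2}-x_{i,1}$. The claim becomes
\[
2^{d-1}\,\frac{\phi(-H)-\phi(H)}{-2H}=2^{d-2}\bigl(\phi'(-H)+\phi'(H)\bigr),
\]
that is,
\[
\frac{\phi(H)-\phi(-H)}{2H}=\frac{\phi'(-H)+\phi'(H)}{2}.
\]
This is the exactness of the trapezoidal rule for the linear function $\phi'$: the left side is $(2H)^{-1}\int_{-H}^{H}\phi'\,dt$. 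Alternatively, one can check it directly on the basis $1,t,t^2$.

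The only step needing care is the claim that $g$ is affine and that the vertex-sum difference equals $2^{d-1}$ times the centroid difference. Write $v$ in monomials. Terms not involving $x_i$ cancel in $g$. Terms $x_ix_j$ produce terms linear in $x_j$, and the term $x_i^2$ produces a constant, since $x_i$ is fixed on $F^{i,1}$. So $g\in P_1(F^{i,1})$, and Lemma~\ref{lem:p1-face-sum-vertex} applies. No other obstacle is expected; the factor bookkeeping, $2^{d-1}$ vertices on a $(d-1)$-face against the $2^{d-2}$ on the right, is the point to double-check.
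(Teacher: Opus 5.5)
Your proof is correct and is essentially the paper's argument. Both proofs combine Lemma~\ref{lem:p1-face-sum-vertex} with the exactness, for quadratics along the $x_i$-direction, of the difference quotient against the midpoint or trapezoidal value of the affine function $\partial v/\partial x_i$. The difference is only bookkeeping. You first sum over $l$ via \eqref{eq:face-sum-edge-decomposition} and apply Lemma~\ref{lem:p1-face-sum-vertex} on the $(d-1)$-face $F^{i,1}$ to the affine difference $g$, which is the same $\delta_i v$ as in the proof of Theorem~\ref{thm:rm-unisolvence}. The paper instead applies the exact centered difference vertex by vertex on each $E^{i,1;q,l}$ and uses Lemma~\ref{lem:p1-face-sum-vertex} on the $(d-2)$-dimensional midsection. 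This gives the finer per-$l$ identity with right-hand side $2^{d-2}\,\frac{\partial v}{\partial x_i}(x_{F^{q,l}})$, and affinity then moves the evaluation points from $x_{F^{q,1}},x_{F^{q,2}}$ to $x_{F^{i,1}},x_{F^{i,2}}$.
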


\begin{proof}
Let
\[
w:=\frac{\partial v}{\partial x_i}\in P_1(K).
\]
The exact centered-difference formula in the $x_i$-direction, together with Lemma~\ref{lem:p1-face-sum-vertex} applied on a $(d-2)$ dimension face, leads to 
\[
\frac{
\Sigma_{E^{i,1;q,l}}(v)-\Sigma_{E^{i,2;q,l}}(v)
}{
x_{i,1}-x_{i,2}
}
=
2^{d-2}w(x_{F^{q,l}})\quad\text{ for } l=1,2.
\]
Hence
\[
\sum_{l=1}^2
\frac{
\Sigma_{E^{i,1;q,l}}(v)-\Sigma_{E^{i,2;q,l}}(v)
}{
x_{i,1}-x_{i,2}
}
=
2^{d-2}\bigl(w(x_{F^{q,1}})+w(x_{F^{q,2}})\bigr).
\]
Since $w$ is affine,
\[
w(x_{F^{q,1}})+w(x_{F^{q,2}})
=
w(x_{F^{i,1}})+w(x_{F^{i,2}})
=
\fint_{F^{i,1}}w\,ds+\fint_{F^{i,2}}w\,ds.
\]
This concludes the proof.
\end{proof}
Using the previous lemma, the following theorem characterizes the compatible vertex-sum and coordinate-derivative average data for $P_2(K)$.
\begin{theorem}
\label{lem:p2-data-compatibility}
Let $d\geq2$ and let $K$ be a $d$-dimensional rectangular element.
Given two sets of real numbers
$
\{a^{i,k;j,l}\}_{1\leq i\ne j\leq d,\ k,l\in\{1,2\}}$ and $
\{b^{i,k}\}_{1\leq i\leq d,\ k\in\{1,2\}},
$
satisfying the symmetry
\[
a^{i,k;j,l}=a^{j,l;i,k},
\qquad i\ne j,
\]
there exists a unique $v\in P_2(K)$ such that
\begin{equation}
\label{eq:p2-data}
\Sigma_{E^{i,k;j,l}}(v)=a^{i,k;j,l},
\qquad
\fint_{F^{i,k}}\frac{\partial v}{\partial x_i}\,ds=b^{i,k},
\end{equation}
for $1\leq i\ne j\leq d$ and $k,l\in\{1,2\}$, if and only if the
following compatibility conditions hold:
\begin{enumerate}[label=(\arabic*)]
    \item For $1\leq i\leq d$, $k\in\{1,2\}$, and $j,q\ne i$,
    \begin{equation}
    \label{eq:a-compatibility}
    a^{i,k;j,1}+a^{i,k;j,2}
    =
    a^{i,k;q,1}+a^{i,k;q,2}.
    \end{equation}

    \item For $1\leq i\leq d$ and any $q\ne i$,
    \begin{equation}
    \label{eq:b-a-compatibility}
    \sum_{l=1}^2
    \frac{
    a^{i,1;q,l}-a^{i,2;q,l}
    }{
    x_{i,1}-x_{i,2}
    }
    =
    2^{d-2}\bigl(b^{i,1}+b^{i,2}\bigr).
    \end{equation}
\end{enumerate}
\end{theorem}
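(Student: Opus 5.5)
Necessity is immediate from results already proved. If $v\in P_2(K)$ realizes the data \eqref{eq:p2-data}, then \eqref{eq:face-sum-edge-decomposition} shows that $a^{i,k;j,1}+a^{i,k;j,2}=\Sigma_{F^{i,k}}(v)$. The right-hand side does not depend on $j\ne i$, so \eqref{eq:a-compatibility} holds. Lemma~\ref{lem:p2-b-a-compatibility}, applied directly, gives \eqref{eq:b-a-compatibility}. Uniqueness follows from Theorem~\ref{thm:rm-unisolvence}: the difference of two such $v$ lies in $P_2(K)\subset P^+$ and annihilates every functional in $\mathcal N^+$, since $\mathcal N^+$ consists only of $(d-2)$-face vertex sums and coordinate-derivative averages. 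Hence the difference is zero.

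For sufficiency I will work inside $P^+$. Take the data values of the functionals in $\mathcal N^+$ from the prescribed numbers: $\Sigma_E\mapsto a$ for $E$ in $\mathcal N$, and the averages $\mapsto b^{i,k}$. Theorem~\ref{thm:rm-unisolvence} then gives a unique $v\in P^+$ matching these values. Two things remain to show. First, $v$ reproduces \emph{all} the data $a^{i,k;j,l}$, not only those in $\mathcal N$. Second, the cubic coefficients of $v$ vanish.

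For the first point I will rerun the propagation argument of Lemma~\ref{lem:rm-face-sums}, applied now to $w$ with $w(\Sigma_E)=\Sigma_E(v)-a^E$. Here $a^E$ denotes the prescribed datum, extended by the symmetry $a^{i,k;j,l}=a^{j,l;i,k}$. The key observation is that each step of that lemma used only \eqref{eq:face-sum-edge-decomposition}, i.e. that $\Sigma_{E^{i,k;j,1}}+\Sigma_{E^{i,k;j,2}}$ is independent of $j$. By \eqref{eq:a-compatibility} the data $a$ satisfy the same linear relations. Hence the defect $\Sigma_E(v)-a^E$ obeys the same relations, vanishes on $\mathcal N$, and therefore vanishes on every $(d-2)$-face. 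Formally, the lemma says that the linear map from the $\mathcal N$-values to all edge sums, restricted to the subspace defined by the relations \eqref{eq:face-sum-edge-decomposition}, is injective. The defect vector lies in that subspace and vanishes on $\mathcal N$.

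For the second point, write $v=p+\sum_i a_i x_i^3$ with $p\in P_2(K)$. Lemma~\ref{lem:p2-b-a-compatibility} holds for $p$. Since $v$ satisfies all the data, subtracting \eqref{eq:b-a-compatibility} shows that $g:=\sum_i a_i x_i^3$ satisfies the defect identity of \eqref{eq:assum} with zero right-hand side, for each $i$. This identity involves only the $x_i$-dependence of $g$, because the terms $a_m x_m^3$ with $m\ne i$ cancel in the difference of the face sums $\Sigma_{F^{i,1}}-\Sigma_{F^{i,2}}$ and have zero $x_i$-derivative. Thus it reduces to the one-dimensional relation
\[
2^{d-1}\frac{t_1^3-t_2^3}{t_1-t_2}=2^{d-2}\cdot 3(t_1^2+t_2^2)
\]
multiplied by $a_i$, with $t_k=x_{i,k}$. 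Checking: $2(t_1^2+t_1t_2+t_2^2)-3(t_1^2+t_2^2)=-(t_1-t_2)^2\ne0$, which forces $a_i=0$. Hence $v\in P_2(K)$.

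The main obstacle is the first point. There I must verify carefully that the propagation in Lemma~\ref{lem:rm-face-sums} uses nothing beyond the relations encoded by \eqref{eq:a-compatibility} (together with symmetry). If needed, I would instead argue by a dimension count: the solution set is an affine space whose dimension matches the number of free data, $\#\mathcal N^+-d=\dim P_2(K)$, once the $d$ relations \eqref{eq:b-a-compatibility} are accounted for.
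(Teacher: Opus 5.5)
Your proposal is correct and follows the paper's proof essentially step by step. Necessity comes from \eqref{eq:face-sum-edge-decomposition} and Lemma~\ref{lem:p2-b-a-compatibility}, uniqueness from Theorem~\ref{thm:rm-unisolvence}, and existence by solving in $P^+$ with the $\mathcal N^+$-data, propagating to all $(d-2)$-face sums, and then eliminating the cubic coefficients via $2(t_1^2+t_1t_2+t_2^2)-3(t_1^2+t_2^2)=-(t_1-t_2)^2\neq0$. Your defect-vector version of the propagation step is valid, because the argument of Lemma~\ref{lem:rm-face-sums} uses only the relations that the defect inherits from \eqref{eq:a-compatibility} and symmetry, and it makes explicit what the paper only asserts.
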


\begin{proof}
The necessity follows directly from
\eqref{eq:face-sum-edge-decomposition} and
Lemma~\ref{lem:p2-b-a-compatibility}.

For the converse, suppose that
\eqref{eq:a-compatibility}--\eqref{eq:b-a-compatibility} hold.
Prescribe the values of the functionals in $\mathcal N^+$ by
\[
\Sigma_{E^{i,1;i+1,2}}
\mapsto a^{i,1;i+1,2},
\qquad 1\leq i\leq d-1,
\]
\[
\Sigma_{E^{i,1;j,1}}
\mapsto a^{i,1;j,1},
\qquad 1\leq i<j\leq d,
\]
together with
\[
\Sigma_{E^{1,2;d,1}}
\mapsto a^{1,2;d,1},
\qquad
\Sigma_{E^{1,2;2,2}}
\mapsto a^{1,2;2,2},
\]
and
\[
\fint_{F^{i,k}}\frac{\partial v}{\partial x_i}\,ds
\mapsto b^{i,k},
\qquad
1\leq i\leq d,\quad k\in\{1,2\}.
\]
Theorem~\ref{thm:rm-unisolvence} then determines a unique
$v\in P^+$ with these prescribed values.
Using \eqref{eq:face-sum-edge-decomposition}, the prescribed values determine
all the remaining $(d-2)$-face vertex sums by the same propagation argument
as in Lemma~\ref{lem:rm-face-sums}. Hence, by
\eqref{eq:a-compatibility},
\[
\Sigma_{E^{i,k;j,l}}(v)=a^{i,k;j,l},
\qquad
1\leq i\neq j\leq d,\quad k,l\in\{1,2\}.
\]
Hence $v$ satisfies all the prescribed data in \eqref{eq:p2-data}.

It remains to show that $v\in P_2(K)$. Suppose
\[
v=v_0+\sum_{s=1}^d\zeta_s x_s^3,
\qquad v_0\in P_2(K).
\]
Note that the data of $v$ satisfy \eqref{eq:b-a-compatibility}, 
Lemma~\ref{lem:p2-b-a-compatibility} shows that  $\sum_{s=1}^d\zeta_sx_s^3$ satisfies \eqref{eq:assum}. For each $1\leq i\leq d$,
substitution of $\sum_{s=1}^d\zeta_sx_s^3$ into
\eqref{eq:assum} yields
\[
2^{d-2}\zeta_i
\left[
2(x_{i,1}^2+x_{i,1}x_{i,2}+x_{i,2}^2)
-3(x_{i,1}^2+x_{i,2}^2)
\right]
=
-2^{d-2}\zeta_i(x_{i,2}-x_{i,1})^2=0.
\]
Since $x_{i,1}\ne x_{i,2}$, it follows that
$\zeta_i=0$ for all $1\leq i\leq d$. Thus $v\in P_2(K)$, which proves
existence.

For uniqueness, let $v_1,v_2\in P_2(K)$ satisfy
\eqref{eq:p2-data}. Their difference has zero $(d-2)$-face vertex sums and
zero coordinate-derivative average values. Hence
\[
\mathcal N^+(v_1-v_2)=0.
\]
Theorem~\ref{thm:rm-unisolvence} gives $v_1=v_2$. This concludes the proof.
\end{proof}

\begin{remark}
\label{rem:cellwise-compatibility}
Recall that $e_i$ is the $i$-th coordinate vector in $\mathbb{R}^d$.  For an element
$K_\alpha$, denote its lower and upper $x_i$-faces by
$F_\alpha^i$ and $F_{\alpha+e_i}^i$, respectively. For
$1\le i\ne j\le d$, let $a_\gamma^{i,j}$ denote the vertex-sum value
on $E_\gamma^{i,j}$ and $b_\beta^i$ the coordinate-derivative average value on $F_\beta^i$, with
$a_\gamma^{i,j}=a_\gamma^{j,i}$.

In this notation, \eqref{eq:a-compatibility} reads
\begin{equation}
\label{eq:cellwise-a-compatibility}
a_{\alpha+(k-1)e_i}^{i,j}
+a_{\alpha+(k-1)e_i+e_j}^{i,j}
=
a_{\alpha+(k-1)e_i}^{i,q}
+a_{\alpha+(k-1)e_i+e_q}^{i,q},
\end{equation}
for $1\le i\le d$, $k\in\{1,2\}$, and $j,q\ne i$. Moreover,
\eqref{eq:b-a-compatibility} reads
\begin{equation}
\label{eq:cellwise-b-a-compatibility}
\frac{
a_{\alpha+e_i}^{i,j}
+a_{\alpha+e_i+e_j}^{i,j}
-a_\alpha^{i,j}
-a_{\alpha+e_j}^{i,j}
}{
h_{i,\alpha_i}
}
=
2^{d-2}\bigl(b_\alpha^i+b_{\alpha+e_i}^i\bigr),
\end{equation}
for $1\le i\le d$ and any $j\ne i$.
\end{remark}
\subsection{Patch basis functions}\label{sec:basis-vh0}
This subsection constructs the local patch functions used in the global basis. For each interior element, a $3^d$-element patch and the corresponding homogeneous nonconforming $P_2$ space are introduced. The patch space is shown to be one-dimensional, and its nonzero basis function is then extended by zero to $\Omega$.

For each interior index $\alpha\in\IKhint$, let $\mathcal{M}_{K_\alpha}$ denote the $3^d$-element patch centered at $K_\alpha$, and let $V_{h0}(\mathcal{M}_{K_\alpha})$ be the homogeneous nonconforming $P_2$ space on this patch, with the
homogeneous boundary conditions imposed on
$\partial\mathcal{M}_{K_\alpha}$ as in
\eqref{ncp2-space-homogeneous}. See Figure~\ref{fig:three-dimensional-situation-one}(a) below for a patch in three dimensions.

\begin{lemma}
\label{lem:single-interior-zero}
For every $\alpha\in\IKhint$ and any
$v\in V_{h0}(\mathcal{M}_{K_\alpha})$, if
\[
\Sigma_{
E_{\alpha-\sum_{r=3}^{d}e_r}^{1,2}
}(v)=0,
\]
then $v=0$ on $\mathcal{M}_{K_\alpha}$. When $d=2$, the sum
$\sum_{r=3}^{d}e_r$ is empty and hence equals zero, so the condition
becomes
$\Sigma_{E_\alpha^{1,2}}(v)
=v(x_{1,\alpha_1},x_{2,\alpha_2})=0$.
\end{lemma}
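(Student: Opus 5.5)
The plan is to show that the data of $v$ (all $(d-2)$-face vertex sums $a_\gamma^{i,j}$ and all coordinate-derivative averages $b_\beta^i$ on the patch) vanish identically. Then Theorem~\ref{lem:p2-data-compatibility}, specifically its uniqueness part, applied on each of the $3^d$ elements gives $v|_K=0$. Since $v\in V_{h0}(\mathcal M_{K_\alpha})$, these data are single-valued across interior faces. The data therefore form a global unknown vector indexed by the faces of the patch, subject to two sets of constraints. The first set is the homogeneous boundary conditions, namely $a_\gamma^{i,j}=0$ on $(d-2)$-faces in $\partial\mathcal M_{K_\alpha}$ and $b_\beta^i=0$ on boundary $(d-1)$-faces. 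The second set is the cellwise compatibility relations \eqref{eq:cellwise-a-compatibility} and \eqref{eq:cellwise-b-a-compatibility} on every element of the patch. The task is then a purely combinatorial linear-algebra propagation along the $3\times\cdots\times3$ grid.

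In each direction the patch has node indices $\alpha_i-1,\alpha_i,\alpha_i+1,\alpha_i+2$. The two outer ones are boundary, and the two middle ones, $\alpha_i$ and $\alpha_i+1$, are interior. A $(d-2)$-face $E^{i,j}_\gamma$ is interior only if $\gamma_i,\gamma_j\in\{\alpha_i,\alpha_i+1\}\times\{\alpha_j,\alpha_j+1\}$. I will proceed in the following steps.

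\textbf{Step 1.} Apply \eqref{eq:cellwise-a-compatibility} in corner and edge elements of the patch. An element touching the boundary in direction $q$ has its $q$-edge sums on the boundary, hence zero. This forces the face sums $a^{i,j}_{\gamma}+a^{i,j}_{\gamma+e_j}$ on its $x_i$-faces to vanish, and hence relates the two unknown interior values on each interior $(d-2)$-face pair. The expected outcome is that for each pair $(i,j)$ and each fixed transverse index, the four interior values $a^{i,j}$ over the $2\times2$ block of nodes have alternating signs. They therefore reduce to a single scalar $s^{i,j}_{\mu}$, with $\mu$ ranging over the transverse cells.

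\textbf{Step 2.} Use \eqref{eq:a-compatibility} once more, now comparing different $j$ for the same interior face, to tie together all the $s^{i,j}_\mu$ over $j$ and over transverse positions $\mu$. Since transverse cells on the boundary layer carry zero data in the other directions, this should link everything to one scalar.

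\textbf{Step 3.} Use \eqref{eq:cellwise-b-a-compatibility} in the three elements along direction $i$, together with $b=0$ on the two boundary $x_i$-faces. This expresses the two interior derivative averages $b^i$ in terms of the $a$-values, so all $b^i$ are determined by the same scalar. Step 3 is routine.

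\textbf{Step 4.} Identify the remaining scalar with $\Sigma_{E^{1,2}_{\alpha-\sum_{r\ge3}e_r}}(v)$. This face lies in the corner transverse cell (index $\alpha_r-1$ in directions $r\ge3$) and at the lower interior nodes $\alpha_1,\alpha_2$. Its vanishing then kills everything, and Theorem~\ref{lem:p2-data-compatibility} concludes.

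The main obstacle is Steps 1–2: organizing the propagation cleanly in arbitrary $d$ so that every interior $(d-2)$-face value is reached, including those with transverse indices in the middle layer $\alpha_r$ rather than the boundary layer. I would proceed by induction on the transverse position. I would start from transverse cells adjacent to the boundary, i.e.\ index $\alpha_r-1$, mirroring the ordering in Lemma~\ref{lem:rm-face-sums}, and then move inward. At each stage I would use \eqref{eq:cellwise-a-compatibility} in an element having one $x_q$-face on the already determined layer, so that a relation reduces to a single new unknown. As a sanity check I would verify the case $d=2$ by hand: there the interior values are the four vertex values at the interior nodes, and compatibility with zero boundary values on the $3\times3$ patch should give the alternating pattern $\pm v(x_{1,\alpha_1},x_{2,\alpha_2})$.
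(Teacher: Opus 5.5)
There is a genuine gap. You place the reduction to a single scalar in Steps~1--2, which use only \eqref{eq:cellwise-a-compatibility} and the boundary values, and these cannot achieve it.

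\emph{Step~1 rests on a false mechanism.} Take an element with $\kappa_q=\alpha_q-1$. Only its lower $x_q$-face lies on $\partial\mathcal M_{K_\alpha}$. So on an interior $x_i$-face $F^i_\beta$ of this element, the face sum equals $a^{i,q}_{\beta+e_q}$, which is an interior value, not zero. The patch function $\varphi_\alpha$ refutes your predicted outcome: by \eqref{eq:general-patch-vertex-sum} and \eqref{eq:patch-face-sum}, all its interior vertex sums and face sums are strictly positive. Hence face sums are not forced to vanish, and there is no sign alternation.

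\emph{Steps~1--2 leave far more than one scalar.} For $d=2$, condition \eqref{eq:a-compatibility} is vacuous, since only one index differs from $i$. Steps~1--2 therefore leave all four interior vertex values free. In general, \eqref{eq:cellwise-a-compatibility} together with the boundary data leaves a $2^d$-dimensional family: one free value per interior node in $\prod_m\{\alpha_m,\alpha_m+1\}$. Every $a^{i,j}_\gamma$ with a middle transverse index $\gamma_m=\alpha_m$ is then forced to equal $a^{i,j}_{\gamma-e_m}+a^{i,j}_{\gamma+e_m}$. This is the structure $\omega_{\alpha;m,\alpha_m}=\omega_{\alpha;m,\alpha_m-1}+\omega_{\alpha;m,\alpha_m+1}$ in Definition~\ref{def:patch-function}.

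\emph{The missing idea is in Step~3.} There \eqref{eq:cellwise-b-a-compatibility} is not merely a formula for the $b^i$. Along a line of three elements in direction $i$ there are three equations but only two interior unknowns $b^i$. Eliminating them leaves
\[
S_{\alpha_i+1}\bigl(h_{i,\alpha_i}^{-1}+h_{i,\alpha_i+1}^{-1}\bigr)
=S_{\alpha_i}\bigl(h_{i,\alpha_i}^{-1}+h_{i,\alpha_i-1}^{-1}\bigr),
\]
where $S_r$ is the vertex sum of the $x_i$-face at node $r$ on that line. These relations, one per direction and transverse line, cut the $2^d$ parameters down to one, with positive ratios. For $d=2$ they give
$v(x_{1,\alpha_1+1},x_{2,\alpha_2})=\frac{h_{1,\alpha_1}^{-1}+h_{1,\alpha_1-1}^{-1}}{h_{1,\alpha_1}^{-1}+h_{1,\alpha_1+1}^{-1}}\,v(x_{1,\alpha_1},x_{2,\alpha_2})$, with the same sign, so your $d=2$ sanity check would fail.

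This relation is exactly the paper's Situation~1. Once the face sum at one interior node is known to vanish, the equations $\sigma/h_{d,2}=2^{d-2}g$ and $-\sigma/h_{d,3}=2^{d-2}g$ force $\sigma=g=0$. The paper also never parametrizes the global solution set. It uses the hypothesis at once on the lower-corner element, where \eqref{eq:cellwise-a-compatibility} makes all unfixed $(d-2)$-face sums equal to the prescribed one. It then propagates by Situation~1 and by induction on the number of indices equal to $\alpha_r$.

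Your global route can be repaired. Parametrize the data by the $2^d$ node values, then impose the relation above in each direction to obtain a product form proportional to the hypothesized vertex sum. As written, however, Steps~1--3 do not establish that the data form a one-dimensional family.
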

\begin{proof}
It suffices to consider
$ 
\mathcal M:=\mathcal M_{K_{(2,2,\ldots,2)}},
$
since the general case follows by a change of global indices. The
prescribed condition becomes
$
\Sigma_{E_{\gamma}^{1,2}}(v)=0.
$ with $\gamma=(2,2,1,\ldots,1)$.

\textit{Step 1 considers the lower-corner element.}
For $K_{(1,\ldots,1)}$, the homogeneous boundary conditions give
zero vertex sums and coordinate-derivative averages on the boundary faces of
$\mathcal M$. Together with
$
\Sigma_{E_{\gamma}^{1,2}}(v)=0
$, 
the compatibility conditions
\eqref{eq:a-compatibility} and
\eqref{eq:b-a-compatibility} imply that all the degrees of freedom
of $K_{(1,\ldots,1)}$ vanish. Therefore, Theorem~\ref{lem:p2-data-compatibility} gives
$
v|_{K_{(1,\ldots,1)}}=0.
$

\textit{Step 2 presents a two-element propagation argument.}
Consider
$ 
K^-:=K_{(1,\ldots,1,2)}$ and $
K^+:=K_{(1,\ldots,1,3)},
$ 
with common face
$
F_*:=K^-\cap K^+=F_{(1,\ldots,1,3)}^d 
$. 
Step~1, the homogeneous boundary conditions, and \eqref{eq:a-compatibility} show that all the
unfixed $(d-2)$-face vertex sums on $F_*$ have a common value
$\sigma$. Denote
\[
g:=\fint_{F_*}\frac{\partial v}{\partial x_d}\,ds.
\]
Applying \eqref{eq:b-a-compatibility} to $K^-$ and $K^+$ gives
\[
\frac{\sigma}{h_{d,2}}=2^{d-2}g,
\qquad
-\frac{\sigma}{h_{d,3}}=2^{d-2}g.
\]
This gives $\sigma=g=0$. Together with the previously known vanishing data, all $(d-2)$-face vertex sums and coordinate-derivative averages on both $K^-$ and $K^+$ vanish. Therefore, Theorem~\ref{lem:p2-data-compatibility} gives
\[
v|_{K^-}=v|_{K^+}=0.
\]
Applying the same argument in each coordinate direction gives
\begin{equation}
\label{eq:patch-base-zero}
v|_{K_\beta}=0
\qquad\text{whenever}\qquad
\#\{r:\beta_r=2\}\leq1.
\end{equation}
This two-element propagation argument is referred to as \textbf{Situation~1} below.
For $d=3$, this propagation argument is illustrated in
Figure~\ref{fig:three-dimensional-situation-one}.

\begin{figure}[H]
\centering
\begin{tikzpicture}[
  zeroedge/.style={red!75!black,very thick},
  sigmaedge/.style={blue!75!black,very thick},
  hiddenline/.style={black!65,thin,dashed},
  gridline/.style={black!55,thin},
  outeredge/.style={black,semithick},
  axis/.style={->,>=stealth,black,semithick},
  flow/.style={->,>=stealth,semithick}
]
\begin{scope}[
  shift={(2.55,6.15)},
  x={(0.48cm,-0.20cm)},
  y={(-0.48cm,-0.20cm)},
  z={(0cm,0.58cm)}
]
  \foreach \a in {0,1,2,3}
    \foreach \b in {0,1,2,3}{
      \draw[hiddenline] (0,\a,\b)--(3,\a,\b);
      \draw[hiddenline] (\a,0,\b)--(\a,3,\b);
      \draw[hiddenline] (\a,\b,0)--(\a,\b,3);
    }
  \draw[outeredge]
    (0,0,0)--(3,0,0) (0,3,0)--(3,3,0)
    (0,0,3)--(3,0,3) (0,3,3)--(3,3,3);
  \draw[outeredge]
    (0,0,0)--(0,3,0) (3,0,0)--(3,3,0)
    (0,0,3)--(0,3,3) (3,0,3)--(3,3,3);
  \draw[outeredge]
    (0,0,0)--(0,0,3) (3,0,0)--(3,0,3)
    (0,3,0)--(0,3,3) (3,3,0)--(3,3,3);
  \draw[axis] (0,0,0)--(3.35,0,0)
    node[font=\scriptsize,below right,inner sep=1pt] {$y$};
  \draw[axis] (0,0,0)--(0,3.35,0)
    node[font=\scriptsize,below left,inner sep=1pt] {$x$};
  \draw[axis] (0,0,0)--(0,0,3.35)
    node[font=\scriptsize,above left,inner sep=1pt] {$z$};
  \node[font=\tiny,fill=white,inner sep=0.4pt]
        at (0.50,0.50,0.50)
    {$K_{(1,1,1)}$};
  \node[font=\tiny,fill=white,inner sep=0.4pt]
        at (0.50,0.50,1.50)
    {$K_{(1,1,2)}$};
  \node[font=\tiny,fill=white,inner sep=0.4pt]
        at (0.50,0.50,2.50)
    {$K_{(1,1,3)}$};
\end{scope}
\node[font=\small,align=center] at (2.55,4.48)
  {(a) the $3^3$-element patch\\[-1pt]
   $\mathcal M_{K_{(2,2,2)}}$};

\begin{scope}[
  shift={(8.05,6.55)},
  x={(0.82cm,-0.31cm)},
  y={(-0.82cm,-0.31cm)},
  z={(0cm,0.94cm)}
]
  \coordinate (A)  at (0,0,0);
  \coordinate (B)  at (1,0,0);
  \coordinate (C)  at (0,1,0);
  \coordinate (D)  at (1,1,0);
  \coordinate (At) at (0,0,1);
  \coordinate (Bt) at (1,0,1);
  \coordinate (Ct) at (0,1,1);
  \coordinate (Dt) at (1,1,1);
  \draw[zeroedge] (A)--(B)--(D)--(C)--cycle;
  \draw[zeroedge] (A)--(At) (B)--(Bt) (C)--(Ct);
  \draw[zeroedge] (At)--(Bt) (At)--(Ct);
  \draw[sigmaedge] (D)--(Dt) (Bt)--(Dt) (Ct)--(Dt);
\end{scope}
\node[font=\scriptsize,align=center] at (8.05,5.20)
  {\textcolor{red!75!black}{red: boundary edges, $\Sigma_E=0$}\\[-1pt]
   \textcolor{blue!75!black}{blue: the three edge sums are equal;}\\[-1pt]
   \textcolor{blue!75!black}{since $\Sigma_{E_{(2,2,1)}^{1,2}}(v)=0$, they vanish}};
\node[font=\small] at (8.05,4.55)
  {(b) the lower-corner element $K_{(1,1,1)}$};

\begin{scope}[
  shift={(2.35,1.45)},
  x={(0.76cm,-0.27cm)},
  y={(-0.76cm,-0.27cm)},
  z={(0cm,0.86cm)}
]
  \coordinate (A0) at (0,0,0);
  \coordinate (B0) at (1,0,0);
  \coordinate (C0) at (0,1,0);
  \coordinate (D0) at (1,1,0);
  \coordinate (A1) at (0,0,1);
  \coordinate (B1) at (1,0,1);
  \coordinate (C1) at (0,1,1);
  \coordinate (D1) at (1,1,1);
  \coordinate (A2) at (0,0,2);
  \coordinate (B2) at (1,0,2);
  \coordinate (C2) at (0,1,2);
  \coordinate (D2) at (1,1,2);
  \draw[zeroedge] (A0)--(B0)--(D0)--(C0)--cycle;
  \draw[zeroedge] (A0)--(A1) (B0)--(B1) (C0)--(C1);
  \draw[sigmaedge] (D0)--(D1);
  \draw[zeroedge] (A1)--(B1) (A1)--(C1);
  \draw[sigmaedge] (B1)--(D1) (C1)--(D1);
  \draw[zeroedge] (A1)--(A2) (B1)--(B2) (C1)--(C2);
  \draw[sigmaedge] (D1)--(D2);
  \draw[zeroedge] (A2)--(B2)--(D2)--(C2)--cycle;
\end{scope}
\node[font=\scriptsize,anchor=east] at (1.20,2.88) {$K_{(1,1,3)}$};
\node[font=\scriptsize,anchor=east] at (1.20,1.82) {$K_{(1,1,2)}$};
\node[font=\scriptsize,align=center] at (2.35,0.54)
  {\textcolor{red!75!black}{red: known $\Sigma_E=0$}\\[-1pt]
   \textcolor{blue!75!black}{blue: equal edge sums $\Sigma_E=\sigma$}};
\node[font=\small,align=center] at (2.35,-0.32)
  {(c) edge vertex sums of\\[-1pt]
   $K_{(1,1,2)}$ and $K_{(1,1,3)}$};

\begin{scope}[
  shift={(6.55,1.45)},
  x={(0.76cm,-0.27cm)},
  y={(-0.76cm,-0.27cm)},
  z={(0cm,0.86cm)}
]
  \coordinate (A0) at (0,0,0);
  \coordinate (B0) at (1,0,0);
  \coordinate (C0) at (0,1,0);
  \coordinate (D0) at (1,1,0);
  \coordinate (A1) at (0,0,1);
  \coordinate (B1) at (1,0,1);
  \coordinate (C1) at (0,1,1);
  \coordinate (D1) at (1,1,1);
  \coordinate (A2) at (0,0,2);
  \coordinate (B2) at (1,0,2);
  \coordinate (C2) at (0,1,2);
  \coordinate (D2) at (1,1,2);

  \fill[yellow!16] (A1)--(B1)--(D1)--(C1)--cycle;
  \draw[outeredge] (A0)--(B0)--(D0)--(C0)--cycle;
  \draw[outeredge] (A0)--(A1) (B0)--(B1) (C0)--(C1) (D0)--(D1);
  \draw[outeredge] (A1)--(A2) (B1)--(B2) (C1)--(C2) (D1)--(D2);
  \draw[outeredge] (A2)--(B2)--(D2)--(C2)--cycle;
  \draw[blue!65!black,thick,dashed]
    (A1)--(B1)--(D1)--(C1)--cycle;
  \node[font=\scriptsize,fill=yellow!16,inner sep=0.4pt]
    at (0.23,0.23,1) {$F_*$};
\end{scope}
\node[font=\scriptsize,align=left,anchor=west] at (7.75,2.94)
  {$K_{(1,1,3)}:\quad\displaystyle-\frac{\sigma}{h_{3,3}}=2g$};
\node[font=\scriptsize,align=left,anchor=west] at (7.75,2.24)
  {$\displaystyle g=\fint_{F_*}\frac{\partial v}{\partial x_3}\,ds$};
\node[font=\scriptsize,align=left,anchor=west] at (7.75,1.54)
  {$K_{(1,1,2)}:\quad\displaystyle\frac{\sigma}{h_{3,2}}=2g$};
\node[font=\small,align=center] at (7.65,-0.32)
  {(d) the common-face relations of\\[-1pt]
   $K_{(1,1,2)}$ and $K_{(1,1,3)}$};
\end{tikzpicture}
\caption{Illustration of the two-element propagation argument in Lemma~\ref{lem:single-interior-zero} for $d=3$.}
\label{fig:three-dimensional-situation-one}
\end{figure}

\textit{Step 3 completes the proof by induction.}
Assume that, for some $1\leq m<d$,
\[
v|_{K_\beta}=0
\qquad
\forall\,\beta\in\{1,2,3\}^d
\quad\text{with}\quad
\#\{r:\beta_r=2\}\leq m.
\]
Let $\alpha\in\{1,2,3\}^d$ satisfy
\[
\#\{r:\alpha_r=2\}=m+1.
\]

For every coordinate direction $j$, at least one of the two $x_j$-faces
of $K_\alpha$ has zero data. More precisely,
\[
\begin{cases}
F_\alpha^j\subset\partial\mathcal M,
& \alpha_j=1,\\[1mm]
v|_{K_{\alpha-e_j}}
=
v|_{K_{\alpha+e_j}}
=0,
& \alpha_j=2,\\[1mm]
F_{\alpha+e_j}^j\subset\partial\mathcal M,
& \alpha_j=3.
\end{cases}
\]
In the case $\alpha_j=2$, both neighboring elements satisfy the induction
hypothesis, since replacing $\alpha_j=2$ by either $1$ or $3$ reduces
the number of components equal to $2$ from $m+1$ to $m$. Thus both
$x_j$-faces have zero data whenever $\alpha_j=2$.

Since $\#\{r:\alpha_r=2\}=m+1\geq1$, there exists at least one
coordinate direction $i$ with $\alpha_i=2$. Hence both $x_i$-faces
of $K_\alpha$ have zero data, while in every coordinate direction at
least one of the two faces has zero data.

By \eqref{eq:a-compatibility} and the same propagation argument as in
Lemma~\ref{lem:rm-face-sums}, all $(d-2)$-face vertex sums on
$K_\alpha$ vanish. Since one coordinate-derivative average in each
direction is already zero, \eqref{eq:b-a-compatibility} implies that the
average on the opposite face also vanishes. Thus all degrees of freedom
of $v|_{K_\alpha}$ vanish, and Theorem~\ref{thm:rm-unisolvence} gives
$
v|_{K_\alpha}=0 
$.
Therefore the assertion also holds for all indices with
$
\#\{r:\alpha_r=2\}=m+1
$.

Induction yields
\[
v|_{K_\alpha}=0
\qquad
\forall\,\alpha\in\{1,2,3\}^d,
\]
and hence $v=0$ on $\mathcal M$.
\end{proof}
\begin{remark}\label{rem:no-interior-elements}
\textbf{Situation~1} immediately yields the following consequence. If a
rectangular mesh $\Kh$ of $\Omega$ has no interior elements,
equivalently, if $N_i\leq2$ for some $1\leq i\leq d$, then
$V_{h0}=\{0\}$.
\end{remark}

For each $\alpha\in\IKhint$, the patch function $\varphi_\alpha$ is
defined by prescribing its vertex-sum values on $(d-2)$-faces and
coordinate-derivative averages on $(d-1)$-faces of
$\mathcal{M}_{K_\alpha}$ as follows.
\begin{df}
\label{def:patch-function}
For
$1\leq m\leq d$, let
\[
\omega_{\alpha;m,\alpha_m-1}
:=
\frac{h_{m,\alpha_m-1}}
{h_{m,\alpha_m-1}+h_{m,\alpha_m}},
\qquad
\omega_{\alpha;m,\alpha_m+1}
:=
\frac{h_{m,\alpha_m+1}}
{h_{m,\alpha_m+1}+h_{m,\alpha_m}},
\]
and
\[
\omega_{\alpha;m,\alpha_m}
:=
\omega_{\alpha;m,\alpha_m-1}
+\omega_{\alpha;m,\alpha_m+1}.
\]
For $r\in\{\alpha_m,\alpha_m+1\}$, let
\[
\eta_{\alpha,m}(r)
:=
\begin{cases}
\alpha_m-1,&r=\alpha_m,\\
\alpha_m+1,&r=\alpha_m+1.
\end{cases}
\]
Let $1\leq i\ne j\leq d$. On every interior $(d-2)$-face
$E_\gamma^{i,j}$ of $\mathcal{M}_{K_\alpha}$, let
\begin{equation}
\label{eq:general-patch-vertex-sum}
\Sigma_{E_\gamma^{i,j}}(\varphi_\alpha)
=A_\gamma^{i,j}
:=
\omega_{\alpha;i,\eta_{\alpha,i}(\gamma_i)}
\omega_{\alpha;j,\eta_{\alpha,j}(\gamma_j)}
\prod_{m\notin\{i,j\}}\omega_{\alpha;m,\gamma_m},
\end{equation}
where
\[
\gamma_i\in\{\alpha_i,\alpha_i+1\},\qquad
\gamma_j\in\{\alpha_j,\alpha_j+1\},\qquad
\gamma_m\in\{\alpha_m-1,\alpha_m,\alpha_m+1\}
\quad(m\notin\{i,j\}).
\]
On each $(d-2)$-face contained in
$\partial\mathcal{M}_{K_\alpha}$, prescribe
$A_\gamma^{i,j}=0$.
For each interior $(d-1)$-face $F_\beta^i$ of
$\mathcal{M}_{K_\alpha}$, prescribe
\begin{equation}
\label{eq:general-patch-derivative-average}
\fint_{F_\beta^i}
\frac{\partial\varphi_\alpha}{\partial x_i}\,ds
=D_i(\beta)
:=
\begin{cases}
\displaystyle
\frac{
\omega_{\alpha;i,\alpha_i-1}
\prod_{m\neq i}\omega_{\alpha;m,\beta_m}
}{
2^{d-2}h_{i,\alpha_i-1}
},
&\beta_i=\alpha_i,\\[12pt]
\displaystyle
-\frac{
\omega_{\alpha;i,\alpha_i+1}
\prod_{m\neq i}\omega_{\alpha;m,\beta_m}
}{
2^{d-2}h_{i,\alpha_i+1}
},
&\beta_i=\alpha_i+1,
\end{cases}
\end{equation}
where
\[
\beta_m\in\{\alpha_m-1,\alpha_m,\alpha_m+1\}
\qquad(m\neq i).
\]
On every $(d-1)$-face contained in
$\partial\mathcal{M}_{K_\alpha}$, prescribe $D_i(\beta)=0$.
\end{df}

\begin{remark}
For the same $\gamma$ and any $i\ne j$, the previous definition gives
\[
A_\gamma^{i,j}=A_\gamma^{j,i}.
\]
Hence the two symbolic representations $E_\gamma^{i,j}$ and
$E_\gamma^{j,i}$ of the same geometric $(d-2)$-face receive the same
prescribed value, so each$(d-2)$-face has a unique
prescribed value.  For $d=3$, the prescribed values on the six interior coordinate
sections of the patch centered at $K_{(2,2,2)}$ are illustrated in
Figure~\ref{fig:three-dimensional-patch-prescriptions}.
For compactness in this figure, write
\(
W_{r_1,r_2,r_3}
:=
\prod_{m=1}^{3}\omega_{(2,2,2);m,r_m}.
\)
\end{remark}
\tikzset{
  sectionboundary/.style={red!75!black,semithick},
  sectioninterior/.style={blue!75!black,semithick},
  sectiongrid/.style={black!55,thin},
  sectiontitle/.style={font=\scriptsize},
  sectionkind/.style={font=\tiny},
  sectionvalue/.style={
    font=\fontsize{6.5}{6.7}\selectfont,
    align=center,
    inner sep=0.2pt
  },
  sectionedgevalue/.style={
    font=\fontsize{6.5}{6.7}\selectfont,
    align=center,
    fill=white,
    fill opacity=0.94,
    text opacity=1,
    inner sep=0.15pt
  },
  sectionzero/.style={
    font=\scriptsize,
    text=red!75!black,
    fill=white,
    inner sep=0.1pt
  },
  sectionaxis/.style={->,red!75!black,thin},
  sectionaxislabel/.style={
    font=\scriptsize,
    fill=white,
    inner sep=0.2pt
  }
}

\newcommand{\verticalW}[4]{%
  \ifcase#1\relax
  \or W_{#2,#3,#4}%
  \or W_{#3,#2,#4}%
  \or W_{#3,#4,#2}%
  \fi
}

\newcommand{\horizontalW}[4]{%
  \ifcase#1\relax
  \or W_{#2,#3,#4}%
  \or W_{#3,#2,#4}%
  \or W_{#3,#4,#2}%
  \fi
}

\newcommand{\faceW}[4]{%
  \ifcase#1\relax
  \or W_{#2,#3,#4}%
  \or W_{#3,#2,#4}%
  \or W_{#3,#4,#2}%
  \fi
}

\newcommand{\patchedgepanel}[6]{%
  \begin{scope}[shift={(#1,#2)},scale=1.05]
    \draw[sectionboundary] (0,0) rectangle (3,3);
    \draw[sectioninterior]
      (1,0)--(1,3)
      (2,0)--(2,3)
      (0,1)--(3,1)
      (0,2)--(3,2);

    \foreach \s in {1,2,3}{
      \node[sectionedgevalue,rotate=90] at (1,{\s-0.5})
        {$\verticalW{#3}{#4}{1}{\s}$};
      \node[sectionedgevalue,rotate=90] at (2,{\s-0.5})
        {$\verticalW{#3}{#4}{3}{\s}$};
    }

    \foreach \r in {1,2,3}{
      \node[sectionedgevalue] at ({\r-0.5},1)
        {$\horizontalW{#3}{#4}{\r}{1}$};
      \node[sectionedgevalue] at ({\r-0.5},2)
        {$\horizontalW{#3}{#4}{\r}{3}$};
    }

    \foreach \t in {0.5,1.5,2.5}{
      \node[sectionzero,below] at (\t,0) {$0$};
      \node[sectionzero,above] at (\t,3) {$0$};
      \node[sectionzero,left]  at (0,\t) {$0$};
      \node[sectionzero,right] at (3,\t) {$0$};
    }

    \node[sectionkind] at (1.5,-0.28) {edge vertex sums};
  \end{scope}%
}

\newcommand{\patchfacepanel}[7]{%
  \begin{scope}[shift={(#1,#2)},scale=1.05]
    \draw[sectiongrid] (0,0) rectangle (3,3);
    \draw[sectiongrid]
      (1,0)--(1,3)
      (2,0)--(2,3)
      (0,1)--(3,1)
      (0,2)--(3,2);

    \foreach \r in {1,2,3}{
      \foreach \s in {1,2,3}{
        \node[sectionvalue] at ({\r-0.5},{\s-0.5})
          {$#7\tfrac{\faceW{#3}{#4}{\r}{\s}}{2h_{#3,#4}}$};
      }
    }

    \node[sectionkind] at (1.5,-0.28) {coordinate-derivative averages};
  \end{scope}%
}

\newcommand{\patchsectionaxes}[4]{%
  \begin{scope}[shift={(#1,#2)}]
    \draw[sectionaxis] (0,0)--(3.12,0)
      node[sectionaxislabel,right] {$x_{#3}$};
    \draw[sectionaxis] (0,0)--(0,3.12)
      node[sectionaxislabel,above] {$x_{#4}$};
  \end{scope}%
}

\begin{figure}[H]
\centering
\captionsetup{aboveskip=4pt,belowskip=0pt}
\begin{tikzpicture}
  \node[sectiontitle] at (3.33,3.50) {$x_1=x_{1,2}$};
  \node[sectiontitle] at (10.43,3.50) {$x_1=x_{1,3}$};

  \patchedgepanel{0.00}{0.15}{1}{1}{2}{3}
  \patchsectionaxes{0.00}{0.15}{2}{3}
  \patchfacepanel{3.55}{0.15}{1}{1}{2}{3}{}

  \patchedgepanel{7.10}{0.15}{1}{3}{2}{3}
  \patchfacepanel{10.65}{0.15}{1}{3}{2}{3}{-}
\end{tikzpicture}
\caption{Prescribed vertex sums and coordinate-derivative averages on the $x_1$-sections of $\mathcal M_{K_{(2,2,2)}}$ for $d=3$.}
\label{fig:three-dimensional-patch-prescriptions}
\end{figure}

\begin{figure}[t]
\ContinuedFloat
\centering
\captionsetup{aboveskip=4pt,belowskip=0pt}
\begin{tikzpicture}
  \node[sectiontitle] at (3.33,5.10) {$x_2=x_{2,2}$};
  \node[sectiontitle] at (10.43,5.10) {$x_2=x_{2,3}$};

  \patchedgepanel{0.00}{1.75}{2}{1}{1}{3}
  \patchsectionaxes{0.00}{1.75}{1}{3}
  \patchfacepanel{3.55}{1.75}{2}{1}{1}{3}{}

  \patchedgepanel{7.10}{1.75}{2}{3}{1}{3}
  \patchfacepanel{10.65}{1.75}{2}{3}{1}{3}{-}

  \node[sectiontitle] at (3.33,0.60) {$x_3=x_{3,2}$};
  \node[sectiontitle] at (10.43,0.60) {$x_3=x_{3,3}$};

  \patchedgepanel{0.00}{-2.70}{3}{1}{1}{2}
  \patchsectionaxes{0.00}{-2.70}{1}{2}
  \patchfacepanel{3.55}{-2.70}{3}{1}{1}{2}{}

  \patchedgepanel{7.10}{-2.70}{3}{3}{1}{2}
  \patchfacepanel{10.65}{-2.70}{3}{3}{1}{2}{-}
\end{tikzpicture}
\caption[]{Continued: prescribed values on the $x_2$- and $x_3$-sections.}
\end{figure}

The following theorem proves that the prescriptions in Definition \ref{def:patch-function} are
compatible and uniquely determine a function in
$V_{h0}(\mathcal{M}_{K_\alpha})$.
\begin{theorem}
\label{lem:single-interior-dimension}
For every $\alpha\in\IKhint$,
\[
\dim V_{h0}(\mathcal{M}_{K_\alpha})=1.
\]
\end{theorem}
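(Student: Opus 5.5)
The plan is to prove the two inequalities $\dim V_{h0}(\mathcal{M}_{K_\alpha})\le 1$ and $\dim V_{h0}(\mathcal{M}_{K_\alpha})\ge 1$ separately. The upper bound comes directly from Lemma~\ref{lem:single-interior-zero}. The functional $\ell(v):=\Sigma_{E_{\alpha-\sum_{r=3}^{d}e_r}^{1,2}}(v)$ is linear on $V_{h0}(\mathcal{M}_{K_\alpha})$, and the lemma says that its kernel is $\{0\}$. Hence the space injects into $\mathbb{R}$, so its dimension is at most one. For the lower bound I will show that the data $A_\gamma^{i,j}$ and $D_i(\beta)$ of Definition~\ref{def:patch-function} define a nonzero element $\varphi_\alpha$ of $V_{h0}(\mathcal{M}_{K_\alpha})$. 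The face $E_{\alpha-\sum_{r\ge3}e_r}^{1,2}$ is interior to the patch, and $A$ there is a product of positive weights, so $\varphi_\alpha\neq0$.

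To construct $\varphi_\alpha$, I will work element by element on each $K_\beta$ with $\beta\in\{\alpha_m-1,\alpha_m,\alpha_m+1\}^d$. I read off the $(d-2)$-face sums and coordinate-derivative averages that the prescription assigns to the faces of $K_\beta$; boundary faces of the patch get zero. I then verify the compatibility conditions \eqref{eq:cellwise-a-compatibility} and \eqref{eq:cellwise-b-a-compatibility}. Once these hold, Theorem~\ref{lem:p2-data-compatibility} gives a unique $v_\beta\in P_2(K_\beta)$ with exactly these data. Gluing the $v_\beta$ gives a function whose vertex sums and face derivative averages are single-valued on shared faces by construction, because the data are attached to the faces and not to the elements. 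The remark on $A_\gamma^{i,j}=A_\gamma^{j,i}$ ensures that each geometric $(d-2)$-face receives only one value. The homogeneous conditions on $\partial\mathcal{M}_{K_\alpha}$ hold because the boundary data were set to zero. The normal-derivative continuity is the continuity of the coordinate-derivative average up to a fixed sign, so the glued function lies in $V_{h0}(\mathcal{M}_{K_\alpha})$.

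The main work, and the step I expect to be the obstacle, is checking the two compatibility identities for every $K_\beta$, including elements touching the patch boundary. I will exploit the tensor-product structure. On the $x_i$-face of $K_\beta$ at index $\gamma_i$, every interior edge value has the form $\omega_{i,\eta(\gamma_i)}\cdot\omega_{j,\eta(\gamma_j)}\prod_{m\ne i,j}\omega_{m,\gamma_m}$. Summing over the two $x_j$-faces $\gamma_j\in\{\beta_j,\beta_j+1\}$ should give $\omega_{i,\eta(\gamma_i)}\prod_{m\ne i}\omega_{m,\beta_m}$, which does not depend on $j$. This uses the one-dimensional identity that the two endpoint weights of element $\beta_j$ sum to $\omega_{j,\beta_j}$. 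For $\beta_j=\alpha_j$ both endpoints are interior and contribute $\omega_{j,\alpha_j-1}+\omega_{j,\alpha_j+1}=\omega_{j,\alpha_j}$. For $\beta_j=\alpha_j\pm1$ one endpoint lies on the patch boundary, contributes $0$, and the other contributes $\omega_{j,\alpha_j\pm1}$. This settles \eqref{eq:cellwise-a-compatibility}, with the boundary-face case ($\gamma_i$ on $\partial\mathcal M$) trivially zero.

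For \eqref{eq:cellwise-b-a-compatibility}, the $x_j$-summed face totals on the lower and upper $x_i$-faces of $K_\beta$ differ by a factor depending only on the direction $i$. The identity then reduces to a one-dimensional check in direction $i$ with common factor $\prod_{m\ne i}\omega_{m,\beta_m}$. Let $S$ denote this upper-minus-lower face difference. For $\beta_i=\alpha_i-1$ the lower face is on the boundary, so $S=\omega_{i,\alpha_i-1}$, which matches $2^{d-2}h_{i,\alpha_i-1}D_i$ with the lower $D$ equal to zero. For $\beta_i=\alpha_i+1$ the sign is negative. For $\beta_i=\alpha_i$ I need $S=\omega_{i,\alpha_i+1}-\omega_{i,\alpha_i-1}$. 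This must equal $h_{i,\alpha_i}$ times the sum of the two prescribed averages, namely $h_{i,\alpha_i}\bigl(\omega_{i,\alpha_i-1}/h_{i,\alpha_i-1}-\omega_{i,\alpha_i+1}/h_{i,\alpha_i+1}\bigr)$. This holds because $h_{i,\alpha_i}/h_{i,\alpha_i-1}-1=-h_{i,\alpha_i-1}/(h_{i,\alpha_i-1}+h_{i,\alpha_i})$ after substituting the definitions of $\omega$. I will carry out this last identity carefully, including the signs, since that is where an error would most likely appear.
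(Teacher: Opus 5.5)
Your proposal is correct and follows the paper's proof essentially step for step: the upper bound from Lemma~\ref{lem:single-interior-zero}, the cellwise check of \eqref{eq:cellwise-a-compatibility} and \eqref{eq:cellwise-b-a-compatibility} through the face totals $\omega_{\alpha;i,\eta_{\alpha,i}(\beta_i)}\prod_{m\ne i}\omega_{\alpha;m,\beta_m}$ and the three cases in direction $i$, then Theorem~\ref{lem:p2-data-compatibility} and the positive distinguished vertex sum. One slip needs fixing: the identity you quote at the end, $h_{i,\alpha_i}/h_{i,\alpha_i-1}-1=-h_{i,\alpha_i-1}/(h_{i,\alpha_i-1}+h_{i,\alpha_i})$, is false as written (take all mesh sizes equal), and the correct ingredients are $\frac{h_{i,\alpha_i}}{h_{i,\alpha_i-1}}\,\omega_{\alpha;i,\alpha_i-1}=1-\omega_{\alpha;i,\alpha_i-1}$ and $\frac{h_{i,\alpha_i}}{h_{i,\alpha_i+1}}\,\omega_{\alpha;i,\alpha_i+1}=1-\omega_{\alpha;i,\alpha_i+1}$, whose difference is exactly the required equality $\omega_{\alpha;i,\alpha_i+1}-\omega_{\alpha;i,\alpha_i-1}=h_{i,\alpha_i}\bigl(\omega_{\alpha;i,\alpha_i-1}/h_{i,\alpha_i-1}-\omega_{\alpha;i,\alpha_i+1}/h_{i,\alpha_i+1}\bigr)$.
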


\begin{proof}
Lemma~\ref{lem:single-interior-zero} implies
\[
\dim V_{h0}(\mathcal{M}_{K_\alpha})\leq1.
\]
It remains to show that the prescriptions in
Definition~\ref{def:patch-function} define a nonzero function
$\varphi_\alpha\in V_{h0}(\mathcal{M}_{K_\alpha})$.

Since each   $(d-2)$-face and $(d-1)$-face is assigned a
single value, the interior continuity conditions are automatically
satisfied. The prescribed values also vanish on
$\partial\mathcal{M}_{K_\alpha}$. Thus, by
Theorem~\ref{lem:p2-data-compatibility} and
Remark~\ref{rem:cellwise-compatibility}, it suffices to verify
\eqref{eq:cellwise-a-compatibility} and
\eqref{eq:cellwise-b-a-compatibility} on each element of the patch.

\textit{Verification of \eqref{eq:cellwise-a-compatibility}.}
Fix an interior $(d-1)$-face $F_\beta^i$ and $j\ne i$, and recall the
assigned vertex-sum values on its two $(d-2)$-faces in the
$x_j$-direction by $A_\beta^{i,j}$ and
$A_{\beta+e_j}^{i,j}$. By
\eqref{eq:general-patch-vertex-sum}, both values have the common factor
\[
\omega_{\alpha;i,\eta_{\alpha,i}(\beta_i)}
\prod_{m\notin\{i,j\}}\omega_{\alpha;m,\beta_m}.
\]
If $\beta_j=\alpha_j-1$, then $A_\beta^{i,j}=0$ and
$A_{\beta+e_j}^{i,j}$ contains the factor
$\omega_{\alpha;j,\alpha_j-1}$. If $\beta_j=\alpha_j$, both
$(d-2)$-faces are interior, and the sum of their corresponding factors is
\[
\omega_{\alpha;j,\alpha_j-1}
+\omega_{\alpha;j,\alpha_j+1}
=
\omega_{\alpha;j,\alpha_j}.
\]
If $\beta_j=\alpha_j+1$, then $A_{\beta+e_j}^{i,j}=0$ and
$A_\beta^{i,j}$ contains the factor
$\omega_{\alpha;j,\alpha_j+1}$. Therefore, in all cases,
\[
A_\beta^{i,j}+A_{\beta+e_j}^{i,j}
=
\omega_{\alpha;i,\eta_{\alpha,i}(\beta_i)}
\prod_{m\neq i}\omega_{\alpha;m,\beta_m}.
\]
The right-hand side is independent of $j$, and hence
\eqref{eq:cellwise-a-compatibility} holds. On a boundary
$(d-1)$-face, it holds trivially since all the corresponding
vertex-sum values vanish. 
The right-hand side is independent
of $j$. Hence \eqref{eq:cellwise-a-compatibility} holds on every
interior $(d-1)$-face; on a boundary face it holds trivially since
all the corresponding values vanish.

For later use, let $ 
S_\beta^i
:=
A_\beta^{i,j}+A_{\beta+e_j}^{i,j}
$ for $j\neq i$.
The previous argument shows that $S_\beta^i$ is well defined and,
for an interior face,
\begin{equation}
\label{eq:patch-face-sum}
S_\beta^i
=
\omega_{\alpha;i,\eta_{\alpha,i}(\beta_i)}
\prod_{m\neq i}\omega_{\alpha;m,\beta_m},
\end{equation}
while $S_\beta^i=0$ on $\partial\mathcal{M}_{K_\alpha}$.

\textit{Verification of \eqref{eq:cellwise-b-a-compatibility}.}
Let $K_\kappa\subset\mathcal{M}_{K_\alpha}$, where
$\kappa_m\in\{\alpha_m-1,\alpha_m,\alpha_m+1\}$. For $j\ne i$, the
left-hand side of \eqref{eq:cellwise-b-a-compatibility} can be written
as
$
\frac{S_{\kappa+e_i}^i-S_\kappa^i}{h_{i,\kappa_i}}
$. 
Let
\[
W_\kappa^i
:=
\prod_{m\neq i}\omega_{\alpha;m,\kappa_m}.
\]
Using \eqref{eq:patch-face-sum} and the vanishing boundary values gives
\[
\frac{S_{\kappa+e_i}^i-S_\kappa^i}{h_{i,\kappa_i}}
=
W_\kappa^i
\begin{cases}
\displaystyle
\frac{\omega_{\alpha;i,\alpha_i-1}}
     {h_{i,\alpha_i-1}},
&\kappa_i=\alpha_i-1,\\[9pt]
\displaystyle
\frac{
\omega_{\alpha;i,\alpha_i+1}
-\omega_{\alpha;i,\alpha_i-1}
}{h_{i,\alpha_i}},
&\kappa_i=\alpha_i,\\[9pt]
\displaystyle
-\frac{\omega_{\alpha;i,\alpha_i+1}}
      {h_{i,\alpha_i+1}},
&\kappa_i=\alpha_i+1.
\end{cases}
\]
On the other hand, the prescribed coordinate-derivative average values give
\[
2^{d-2}\bigl(D_i(\kappa)+D_i(\kappa+e_i)\bigr)
=
W_\kappa^i
\begin{cases}
\displaystyle
\frac{\omega_{\alpha;i,\alpha_i-1}}
     {h_{i,\alpha_i-1}},
&\kappa_i=\alpha_i-1,\\[9pt]
\displaystyle
\frac{\omega_{\alpha;i,\alpha_i-1}}
     {h_{i,\alpha_i-1}}
-\frac{\omega_{\alpha;i,\alpha_i+1}}
      {h_{i,\alpha_i+1}},
&\kappa_i=\alpha_i,\\[9pt]
\displaystyle
-\frac{\omega_{\alpha;i,\alpha_i+1}}
      {h_{i,\alpha_i+1}},
&\kappa_i=\alpha_i+1.
\end{cases}
\]
The two expressions agree in the first and third cases. For
$\kappa_i=\alpha_i$, the equality follows from
\[
\frac{
\omega_{\alpha;i,\alpha_i+1}
-\omega_{\alpha;i,\alpha_i-1}
}{h_{i,\alpha_i}}
=
\frac{\omega_{\alpha;i,\alpha_i-1}}
     {h_{i,\alpha_i-1}}
-
\frac{\omega_{\alpha;i,\alpha_i+1}}
     {h_{i,\alpha_i+1}},
\]
which follows directly from the definition of the weights.
Therefore \eqref{eq:cellwise-b-a-compatibility} holds on every
element of the patch.

Theorem~\ref{lem:p2-data-compatibility} gives a unique polynomial in $P_2(K_\kappa)$ on each element, and these elementwise polynomials define a function $\varphi_\alpha\in V_{h0}(\mathcal{M}_{K_\alpha})$. 
Moreover,
\[
\Sigma_{
E_{\alpha-\sum_{r=3}^{d}e_r}^{1,2}
}(\varphi_\alpha)
=
\prod_{m=1}^{d}
\omega_{\alpha;m,\alpha_m-1}
>0.
\]
Hence $\varphi_\alpha\neq0$, and therefore
\[
\dim V_{h0}(\mathcal{M}_{K_\alpha})\geq1.
\]
This concludes the proof.
\end{proof}
In the remainder of the paper, $\varphi_\alpha$ also denotes its
extension by zero from $\mathcal{M}_{K_\alpha}$ to $\Omega$.
\subsection{Proof of Theorem~\ref{thm:dim-theory}}\label{sec:proof-dim-theory}
\begin{proof}
For every $\alpha\in\IKhint$, Theorem~\ref{lem:single-interior-dimension}
shows that
$V_{h0}(\mathcal{M}_{K_\alpha})
=\operatorname{span}\{\varphi_\alpha\}$.
Extending $\varphi_\alpha$ by zero outside
$\mathcal{M}_{K_\alpha}$ gives a function in $V_{h0}$.

It remains to prove that these functions span $V_{h0}$ and are
linearly independent. Let $\mathbf 1=(1,\ldots,1)$ and order the multi-indices $\kappa=(\kappa_1,\ldots,\kappa_d)$, with $1\leq\kappa_i\leq N_i-2$, lexicographically. For each such $\kappa$,
let $\alpha=\kappa+\mathbf1\in\IKhint$. Then $K_\kappa$ is the
lower-corner element of $\mathcal{M}_{K_\alpha}$.

Let $v\in V_{h0}$. The coefficients $c_\alpha$, starting from $c_{(2,2,\ldots,2)}$, are determined successively in lexicographic order.  Suppose that the coefficients
$c_\beta$ for $\beta<\alpha$ have already been determined, and let 
$v_\alpha:=v-\sum_{\beta<\alpha}c_\beta\varphi_\beta$.
Among the patches containing $K_\kappa$, $\mathcal{M}_{K_\alpha}$
is the only one whose coefficient has not yet been determined.

By Definition~\ref{def:patch-function},
\[
\Sigma_{
E_{\alpha-\sum_{r=3}^{d}e_r}^{1,2}
}(\varphi_\alpha)>0.
\]
Hence there exists a unique $c_\alpha$ such that
\[
\Sigma_{
E_{\alpha-\sum_{r=3}^{d}e_r}^{1,2}
}
(v_\alpha-c_\alpha\varphi_\alpha)=0.
\]
All the remaining data required on the lower sides of $K_\kappa$
either vanish by the homogeneous boundary conditions or come from
elements already eliminated in the sweeping procedure. Step 1 in the proof of Lemma~\ref{lem:single-interior-zero} therefore gives
$(v_\alpha-c_\alpha\varphi_\alpha)|_{K_\kappa}=0$.

Proceeding through all $\kappa$ with
$1\leq\kappa_i\leq N_i-2$ gives
\[
\widetilde v
:=
v-\sum_{\alpha\in\IKhint}c_\alpha\varphi_\alpha,
\]
where $\widetilde v|_{K_\kappa}=0$ for all
$1\leq\kappa_i\leq N_i-2$. For each coordinate direction, the vanishing
on these elements provides the zero data required on one side of the
adjacent outer-layer elements, while the homogeneous boundary conditions
provide the corresponding zero data on the opposite side. Repeated
application of \textbf{Situation~1} in the proof of
Lemma~\ref{lem:single-interior-zero}, successively in each coordinate
direction, therefore propagates this vanishing to all remaining elements.
Hence $\widetilde v=0$ on $\Omega$, and consequently
\[
v=\sum_{\alpha\in\IKhint}c_\alpha\varphi_\alpha.
\]
Thus $\{\varphi_\alpha:\alpha\in\IKhint\}$ spans $V_{h0}$.

To prove linear independence, suppose that
$\sum_{\alpha\in\IKhint}c_\alpha\varphi_\alpha=0$.
Apply the same sweeping order. On $K_{(1,\ldots,1)}$,
$\varphi_{(2,\ldots,2)}$ is the only patch function whose coefficient
has not already been eliminated. Since its distinguished vertex-sum
value is nonzero, $c_{(2,\ldots,2)}=0$. Repeating the same argument,
at each $K_\kappa$ the only remaining patch function is
$\varphi_{\kappa+\mathbf1}$, and hence
$c_{\kappa+\mathbf1}=0$. Therefore $c_\alpha=0$ for every
$\alpha\in\IKhint$, proving linear independence.

Thus $\{\varphi_\alpha:\alpha\in\IKhint\}$ is a basis of $V_{h0}$,
and
\[
\dim V_{h0}=\#\Khint.
\]
If $\Khint=\varnothing$, then $V_{h0}=\{0\}$ by Remark~\ref{rem:no-interior-elements}, and the
same conclusion holds.
\end{proof}

\section{Interpolation and error estimates}\label{sec:basis-interpolation-error}

This section constructs the interpolation operator and uses it to derive the approximation and consistency estimates.

\subsection{Interpolation operator and optimal approximation}\label{sec:interpolation}
{
The derivation of the approximation estimate uses a quadratic-preserving
interpolation operator into \(V_{h0}\), generalizing the interpolation
operator of \cite{DaiZengZhangZhang2024} to arbitrary dimension. The construction is motivated by the considerations described in Remark~\ref{rem:interpolation-structure}.
\begin{remark}
\label{rem:interpolation-structure}
{
The construction combines three ingredients, each forced by a
constraint that cannot be relaxed: (i)~the natural dual of
$\varphi_\alpha$ is a point functional without uniform $L^2$
stability, so a stable realization by cell averages is mandatory;
(ii)~$P_2$ reproduction requires the full $3^d$-element patch
neighborhood of every element, whence the extended mesh;
(iii)~conformity with the boundary conditions of $V_{h0}$ forces the
restriction to interior centers, and the resulting boundary-layer
defect is controlled by the vanishing traces of $H_0^2(\Omega)$
functions.
}
\end{remark}

\subsubsection{Extended mesh and \(P_2\) reproduction}
\label{sec:interp-extended}

\begin{df}
\label{def:local-phi}
Let $\Kh$ be the original rectangular mesh. Extend each coordinate
grid by two cells on both sides by setting
\[
x_{i,0}:=2x_{i,1}-x_{i,2},
\qquad
x_{i,-1}:=x_{i,0}-(x_{i,2}-x_{i,1}),
\]
and
\[
x_{i,N_i+2}:=2x_{i,N_i+1}-x_{i,N_i},
\qquad
x_{i,N_i+3}:=x_{i,N_i+2}
 +(x_{i,N_i+1}-x_{i,N_i}).
\]
The two added cells at each end have the same length as the adjacent
boundary cell. Hence the extended mesh has the same shape regularity
as the original mesh.

Using the extended nodes, define $K_\alpha$ as before for
$-1\leq\alpha_i\leq N_i+2$, $1\leq i\leq d$, and set
\[
\widetilde{\mathcal I}_h^d
:=
\{\alpha=(\alpha_1,\ldots,\alpha_d):
-1\leq\alpha_i\leq N_i+2,\ 1\leq i\leq d\},
\]
and let the extended rectangular mesh be
\[
\widetilde{\Kh}
:=
\{K_\alpha:\alpha\in\widetilde{\mathcal I}_h^d\}.
\]
Its interior element index set is
\[
\widetilde{\mathcal I}_{h,\mathrm{int}}^d
:=
\{\alpha\in\widetilde{\mathcal I}_h^d:
0\leq\alpha_i\leq N_i+1,\ 1\leq i\leq d\}.
\]
For $\alpha\in\widetilde{\mathcal I}_{h,\mathrm{int}}^d$, define
$\mathcal M_{K_\alpha}$ as in Subsection~\ref{sec:basis-vh0} and set
\[
\widetilde\Omega_h
:=
\bigcup_{\alpha\in\widetilde{\mathcal I}_{h,\mathrm{int}}^d}
\mathcal M_{K_\alpha}
=
\prod_{i=1}^d[x_{i,-1},x_{i,N_i+3}].
\]
For each $\alpha\in\widetilde{\mathcal I}_{h,\mathrm{int}}^d$, let
$\varphi_\alpha$ be the patch function from
Definition~\ref{def:patch-function} on $\mathcal M_{K_\alpha}$,
extended by zero to $\widetilde\Omega_h$.
\end{df}

The following two local functionals are used to establish quadratic
polynomial reproduction. For
$\alpha\in\widetilde{\mathcal I}_{h,\mathrm{int}}^d$, let
$x_{K_\alpha}$ be the barycenter of $K_\alpha$ and, for convenience,
write $h_{\alpha,i}:=h_{i,\alpha_i}=x_{i,\alpha_i+1}-x_{i,\alpha_i}$.
For $v\in P_2(K_\alpha)$, define
\[
    r_{K_\alpha}(v)
    :=
    v(x_{K_\alpha})
    -\frac18\sum_{i=1}^{d}h_{\alpha,i}^{2}
    \frac{\partial^2 v}{\partial x_i^2},
\]
and
\[
    t_{K_\alpha}(v)
    :=
    \fint_{K_\alpha}v\,dx
    -\frac16\sum_{i=1}^{d}h_{\alpha,i}^{2}
    \frac{\partial^2 v}{\partial x_i^2}.
\]

\begin{lemma}
\label{lem:phi-basis-properties}
The patch functions
$\{\varphi_\alpha\}_{\alpha\in
\widetilde{\mathcal I}_{h,\mathrm{int}}^d}$ preserve  $P_2$ on $\Omega$:
for every $v\in P_2(\widetilde\Omega_h)$ and $x\in\Omega$,
\[
    I_hv(x)
    :=\sum_{\alpha\in\widetilde{\mathcal I}_{h,\mathrm{int}}^d}
    r_{K_\alpha}(v)\varphi_\alpha(x)=v(x),
    \qquad
    \sum_{\alpha\in\widetilde{\mathcal I}_{h,\mathrm{int}}^d}
    t_{K_\alpha}(v)\varphi_\alpha(x)=v(x).
\]
\end{lemma}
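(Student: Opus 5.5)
The plan is to compare both sides elementwise. First, the two identities are reduced to one. Then each identity is checked on a single element $K_\kappa$, $\kappa\in\IKh$, using the uniqueness part of Theorem~\ref{lem:p2-data-compatibility}. Two facts make this possible. The only patch functions that are nonzero on $K_\kappa$ are the $\varphi_\alpha$ with $|\alpha_m-\kappa_m|\le1$ for all $m$. All such $\alpha$ satisfy $0\le\alpha_m\le N_m+1$, so they lie in $\widetilde{\mathcal I}_{h,\mathrm{int}}^d$; this is exactly why the grid was extended by two cells. Both sides restricted to $K_\kappa$ belong to $P_2(K_\kappa)$. Hence it suffices to show that both sides have the same vertex sums on all $(d-2)$-faces of $K_\kappa$ and the same coordinate-derivative averages on all $(d-1)$-faces of $K_\kappa$.

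\textbf{Step 1: $r_{K_\alpha}$ and $t_{K_\alpha}$ coincide on $P_2$.} Both functionals are linear, so I compare them on monomials. On $1$ and $x_i$, and on $x_ix_j$ with $i\ne j$, both equal the value at the barycenter, since the cell average of such a function is its barycenter value. On $x_i^2$, with $c_i$ the midpoint and $h$ the cell width, $r$ gives $c_i^2-\tfrac{h^2}{8}\cdot2=c_i^2-\tfrac{h^2}{4}$. Likewise $t$ gives $c_i^2+\tfrac{h^2}{12}-\tfrac{h^2}{6}\cdot2=c_i^2-\tfrac{h^2}{4}$. Hence $r_{K_\alpha}=t_{K_\alpha}$ on $P_2$, and only the first identity needs proof.

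\textbf{Step 2: tensor factorization.} Define the one-dimensional functional $\lambda_a(p):=p(c_a)-\tfrac18h_a^2p''$ on $P_2(\mathbb R)$. For a monomial $v=\prod_m p_m(x_m)$ of total degree $\le2$, at most one factor carries a second derivative, so $r_{K_\alpha}(v)=\prod_m\lambda_{\alpha_m}(p_m)$.

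The prescribed data of $\varphi_\alpha$ in \eqref{eq:general-patch-vertex-sum} and \eqref{eq:general-patch-derivative-average} are also products of one-dimensional weights. The sum over $\alpha$ of the vertex sums of $r_{K_\alpha}(v)\varphi_\alpha$ on a face $E^{i,j}_\gamma$ therefore factorizes into a product of one-dimensional sums over $\alpha_m$. The same holds for the derivative averages.

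On the other side, for such a product $v$ the vertex sum on $E^{i,j}_\gamma$ is
\[
p_i(x_{i,\gamma_i})\,p_j(x_{j,\gamma_j})\prod_{m\ne i,j}\bigl(p_m(x_{m,\gamma_m})+p_m(x_{m,\gamma_m+1})\bigr).
\]
The derivative average on $F^i_\beta$ is
\[
p_i'(x_{i,\beta_i})\prod_{m\ne i}\tfrac12\bigl(p_m(x_{m,\beta_m})+p_m(x_{m,\beta_m+1})\bigr).
\]
This uses that the factors $p_m$, $m\ne i$, have degree $\le1$ whenever $p_i'\ne0$, so their cell averages equal endpoint means. The powers of $2$ are then to be matched against the $2^{d-2}$ in $D_i$.

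\textbf{Step 3: one-dimensional identities.} Everything then reduces to three one-dimensional identities on a nonuniform grid, each required for $p\in\{1,x,x^2\}$:
\begin{itemize}
\item[(A)] for each node $s$, $\sum_{a\in\{s-1,s\}}\lambda_a(p)\,\omega_{a;\eta_a(s)}=p(x_s)$;
\item[(B)] for each cell $r$, $\sum_{a\in\{r-1,r,r+1\}}\lambda_a(p)\,\omega_{a;r}=p(x_r)+p(x_{r+1})$;
\item[(C)] for each node $s$, the analogous sum with the derivative weights $\pm\omega_{a;\cdot}/h_\cdot$ equals $p'(x_s)$.
\end{itemize}
Here $\omega_{a;\cdot}$ are the weights of Definition~\ref{def:patch-function} relative to the patch center $a$. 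Identity (B) with $p=1$ is essentially the partition property built into $\omega_{a;a}=\omega_{a;a-1}+\omega_{a;a+1}$.

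I expect the main obstacle to be verifying (A)–(C) for $p=x^2$ with unequal neighboring widths $h_{s-1},h_s,h_{s+1}$. There the correction $-\tfrac18h^2p''$ must cancel exactly against the mismatch between midpoint values and nodal values induced by the weights $h_{a\pm1}/(h_{a\pm1}+h_a)$. I would first translate so that $x_s=0$, then expand everything in the three widths, and check (C) using the identity between weights already proved in Theorem~\ref{lem:single-interior-dimension}.

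Once (A)–(C) hold, all data of $I_hv$ and $v$ agree on every $K_\kappa$. Theorem~\ref{lem:p2-data-compatibility} then gives $I_hv=v$ on each $K_\kappa$, hence on $\Omega$, and Step 1 gives the $t$-identity.
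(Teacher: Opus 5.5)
Your proposal is correct and follows the paper's proof. Like the paper, you match all $(d-2)$-face vertex sums and coordinate-derivative averages on each $K_\kappa$, invoke the uniqueness part of Theorem~\ref{lem:p2-data-compatibility}, and show $r_{K_\alpha}=t_{K_\alpha}$ on $P_2$. Your identities (A) and (B) are exactly \eqref{eq:weight-fixed-identities} and \eqref{eq:weight-cell-identities}, (C) is the difference formula \eqref{eq:derivative-average-interpolant}, and your tensor factorization simply packages the appendix's monomial-by-monomial cases.

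One bookkeeping correction: with weights $\pm\omega_{a;\cdot}/h_\cdot$, the sum in (C) equals $(\lambda_s(p)-\lambda_{s-1}(p))/(h_{s-1}+h_s)=\tfrac12 p'(x_s)$, not $p'(x_s)$. This factor $\tfrac12$, together with the $2^{-(d-2)}$ in $D_i$, gives exactly the $2^{-(d-1)}$ required by the tangential endpoint averages, so the argument goes through once the normalization in (C) is fixed.
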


\begin{proof}
Fix an arbitrary element $K_\kappa\in\Kh$. On $K_\kappa$, only the
patch functions $\varphi_\alpha$ with
\[
\alpha_m\in\{\kappa_m-1,\kappa_m,\kappa_m+1\},
\qquad 1\leq m\leq d,
\]
can be nonzero. Their supports are contained in the $5^d$-cell patch
centered at $K_\kappa$, which is contained in $\widetilde\Omega_h$.
Since $K_\kappa$ is arbitrary, it is sufficient to prove
$
I_hv=v
$ on  $K_\kappa$. 

For $p\neq q$, direct calculation gives
\[
\begin{aligned}
r_{K_\alpha}(1)&=1,\\
r_{K_\alpha}(x_p)
&=\frac{x_{p,\alpha_p}+x_{p,\alpha_p+1}}2,\\
r_{K_\alpha}(x_px_q)
&=
\frac{x_{p,\alpha_p}+x_{p,\alpha_p+1}}2\cdot
\frac{x_{q,\alpha_q}+x_{q,\alpha_q+1}}2,\\
r_{K_\alpha}(x_p^2)
&=x_{p,\alpha_p}x_{p,\alpha_p+1}.
\end{aligned}
\]

The vertex sums and coordinate-derivative averages of the patch functions
on the faces of $K_\kappa$ are computed in
Appendix~\ref{subsec:details-phi-basis-properties}, together with the
elementary weight identities used in the calculation. These computations
yield
\begin{equation}
\label{eq:vertex-sum-reproduction}
\Sigma_{E_\gamma^{a,b}}(I_hv)
=
\Sigma_{E_\gamma^{a,b}}(v),
\qquad v\in P_2,
\end{equation}
and
\begin{equation}
\label{eq:derivative-average-reproduction}
\fint_{F_\beta^a}
\frac{\partial I_hv}{\partial x_a}\,ds
=
\fint_{F_\beta^a}
\frac{\partial v}{\partial x_a}\,ds,
\qquad v\in P_2.
\end{equation}

Thus $I_hv$ and $v$ have the same $(d-2)$-face vertex sums and
coordinate-derivative averages on $K_\kappa$. 
Theorem~\ref{lem:p2-data-compatibility} proves
$ 
I_hv=v 
$ on $K_\kappa$.
Since $K_\kappa$ is arbitrary, the first identity holds in 
$\Omega$.

The exact quadratic expansion about $x_{K_\alpha}$ gives
\begin{equation}
\label{eq:quadratic-cell-average}
\fint_{K_\alpha}v\,dx
=
v(x_{K_\alpha})
+\frac1{24}\sum_{i=1}^d h_{\alpha,i}^2
\frac{\partial^2v}{\partial x_i^2}.
\end{equation}
Hence
$t_{K_\alpha}(v)=r_{K_\alpha}(v)$ for every
$v\in P_2(\widetilde\Omega_h)$, and the second identity follows
from the first one.
\end{proof}

A uniform local bound for the patch functions will be used in the
stability estimate for the interpolation operator.

\begin{lemma}
\label{lem:phi-local-bound}
For every $\alpha\in\widetilde{\mathcal I}_{h,\mathrm{int}}^d$ and
$k\geq0$,
\[
    |\varphi_\alpha|_{k,T}\leq C h_T^{d/2-k},
    \qquad
    \text{for all }T\in\mathcal{M}_{K_\alpha},
\]
where $C>0$ is independent of $h$.
\end{lemma}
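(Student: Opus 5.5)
The proof is a scaling argument: on each element of the patch, $\varphi_\alpha$ is a quadratic polynomial determined through Theorem~\ref{lem:p2-data-compatibility} by data that are uniformly bounded after rescaling.

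\textbf{Reduction.} Fix $\alpha\in\widetilde{\mathcal I}_{h,\mathrm{int}}^d$ and an element $T=K_\kappa\subset\mathcal M_{K_\alpha}$. Since $\varphi_\alpha|_T\in P_2(T)$, we have $|\varphi_\alpha|_{k,T}=0$ for $k\ge3$, so only $k=0,1,2$ need to be treated.

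\textbf{Mesh comparability.} I first check that all mesh sizes appearing in the data are comparable to $h_T$. Shape regularity of each cell gives $h_{i,\kappa_i}\simeq h_T$ for every $i$. For $d\ge2$, two face-neighbouring cells $K_\kappa$ and $K_{\kappa\pm e_i}$ share the edge lengths in every direction $j\ne i$. Hence $h_{i,\kappa_i\pm1}\simeq h_{j,\kappa_j}\simeq h_{i,\kappa_i}$. Iterating across the at most $3^d$ cells of the patch gives $h_{m,r}\simeq h_T$ for all $r\in\{\alpha_m-1,\alpha_m,\alpha_m+1\}$. The constants depend only on $d$ and the shape-regularity constant, and the extended mesh of Definition~\ref{def:local-phi} keeps the same shape regularity. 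Consequently every weight $\omega_{\alpha;m,r}$ lies in an interval $[c,2]$ with $c>0$ independent of $h$. The data of Definition~\ref{def:patch-function} therefore satisfy
\[
|A_\gamma^{i,j}|\le C,\qquad |D_i(\beta)|\le C h_T^{-1}.
\]

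\textbf{Reference cube.} Let $F_T(\hat x)=x_{\kappa}+\mathrm{diag}(h_{1,\kappa_1},\dots,h_{d,\kappa_d})\hat x$ map $\hat T=[0,1]^d$ onto $T$, where $x_\kappa$ is the lower vertex of $T$, and set $\hat\varphi=\varphi_\alpha|_T\circ F_T\in P_2(\hat T)$. This diagonal map sends vertices to vertices, so vertex sums on $(d-2)$-faces are unchanged. It multiplies the coordinate-derivative average in direction $i$ by $h_{i,\kappa_i}$. Hence the data of $\hat\varphi$ on $\hat T$ are the $A$'s and the numbers $h_{i,\kappa_i}D_i(\cdot)$, all bounded by a constant $C$. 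By the uniqueness part of Theorem~\ref{lem:p2-data-compatibility} (equivalently Theorem~\ref{thm:rm-unisolvence}), the linear map from $P_2(\hat T)$ to its data vector is injective on this fixed, $h$-independent finite-dimensional space. It therefore has a bounded inverse on its range. Since all norms on $P_2(\hat T)$ are equivalent, this gives $\|\hat\varphi\|_{2,\hat T}\le C$ with $C$ depending only on $d$.

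\textbf{Scaling back.} The standard scaling estimate gives
\[
|\varphi_\alpha|_{k,T}\le C\,|T|^{1/2}\max_i h_{i,\kappa_i}^{-k}\,|\hat\varphi|_{k,\hat T}\le C h_T^{d/2-k},
\]
using $|T|\simeq h_T^d$ and $h_{i,\kappa_i}\simeq h_T$, both from shape regularity.

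\textbf{Main obstacle.} The delicate step is the uniformity of the constants, namely the mesh-comparability argument across the patch. The weights and the derivative data involve the neighbouring sizes $h_{m,\alpha_m\pm1}$, while the estimate is stated in terms of $h_T$ alone. I would handle this through the shared-edge argument above: it is here that $d\ge2$ and the tensor-product structure are used, since cells in one slab share all transverse edge lengths. The unisolvence constant itself causes no difficulty, because after the diagonal rescaling it lives on a single fixed reference cube.
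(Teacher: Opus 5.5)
Your proof is correct and follows the same route as the paper. Theorem~\ref{thm:rm-unisolvence} makes the vertex-sum and derivative-average functionals a norm on $P_2$ of the reference cell, the data of Definition~\ref{def:patch-function} are uniformly bounded after rescaling, and a standard scaling argument finishes the estimate. You also supply details the paper omits, notably the shared-edge argument showing that shape regularity of each cell forces $h_{m,r}\simeq h_T$ across the patch, which is what makes $|D_i(\beta)|\le Ch_T^{-1}$ uniform.
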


\begin{proof}  
Theorem~\ref{thm:rm-unisolvence} shows that, on $P_2(T)$, the sum of the absolute values of all $(d-2)$-face vertex-sum functionals and all coordinate derivative-average functionals defines a norm.
The result follows from the standard scaling argument, finite-dimensional
norm equivalence on the reference rectangle, and the uniform boundedness
of the degrees of freedom given by
\eqref{eq:general-patch-vertex-sum} and
\eqref{eq:general-patch-derivative-average}. The details are omitted.
\end{proof}
\subsubsection{Stable local averaging and the auxiliary interpolant}
\label{sec:interp-auxiliary}

Although \(r_{K_\alpha}\) and \(t_{K_\alpha}\) reproduce \(P_2\), they
involve point evaluations or second derivatives and are therefore not
suitable for stability estimates. To obtain a stable interpolation operator,
\(t_{K_\alpha}\) is replaced by an \(L^1\)-stable local averaging functional
that agrees with it on \(P_2\). These functionals are then used to define an
auxiliary interpolant on the extended mesh.

\begin{df}
\label{def:auxiliary-interpolator}
Recall $e_i$ be the $i$-th coordinate vector. For every
$\alpha\in\widetilde{\mathcal I}_{h,\mathrm{int}}^d$, define
\begin{equation}
\label{eq:lambda-definition}
\begin{aligned}
\lambda_{K_\alpha}(v)
:= w_{\alpha,0}\fint_{K_\alpha}v\,dx 
 +\sum_{i=1}^d\left(
w_{\alpha,i}^{-}\fint_{K_{\alpha-e_i}}v\,dx
+w_{\alpha,i}^{+}\fint_{K_{\alpha+e_i}}v\,dx
\right),
\end{aligned}
\end{equation}
where
\begin{align*}
w_{\alpha,i}^{-}
&:=
-\frac{h_{\alpha,i}^2}
{(h_{\alpha-e_i,i}+h_{\alpha,i})
 (h_{\alpha-e_i,i}+h_{\alpha,i}+h_{\alpha+e_i,i})},\\
w_{\alpha,i}^{+}
&:=
-\frac{h_{\alpha,i}^2}
{(h_{\alpha,i}+h_{\alpha+e_i,i})
 (h_{\alpha-e_i,i}+h_{\alpha,i}+h_{\alpha+e_i,i})},
\qquad 1\leq i\leq d,
\end{align*}
and
\[
w_{\alpha,0}
:=1-\sum_{i=1}^d
\bigl(w_{\alpha,i}^{-}+w_{\alpha,i}^{+}\bigr).
\]
The weights $w_{\alpha,i}^{-}$ and $w_{\alpha,i}^{+}$ in
\eqref{eq:lambda-definition} are the unique choice for which the
first-derivative terms in the neighboring cell-average differences cancel
and the second-derivative terms match $-\frac16h_{\alpha,i}^{2}$;
see the proof of Lemma~\ref{lem:lambda-t-identity} in
Appendix~\ref{subsec:proof-lambda-t}.
Thus $\lambda_{K_\alpha}(v)$ depends only on the cell averages over
$K_\alpha$ and the $2d$ neighboring elements $K_{\alpha\pm e_i}$.

Define the auxiliary interpolation operator
$\widetilde\Pi_h:L^1(\widetilde\Omega_h)\longrightarrow
\operatorname{span}\{\varphi_\alpha:\alpha\in
\widetilde{\mathcal I}_{h,\mathrm{int}}^d\}$ by
\begin{equation}
\label{eq:auxiliary-interpolator}
\widetilde\Pi_hv
:=\sum_{\alpha\in\widetilde{\mathcal I}_{h,\mathrm{int}}^d}
\lambda_{K_\alpha}(v)\varphi_\alpha.
\end{equation}
For $T=K_\kappa\in\Kh$, define
\begin{equation}
\label{eq:delta-T}
\Delta_T
:=\bigcup_{\delta\in\{-2,-1,0,1,2\}^d}
K_{\kappa+\sum_{i=1}^d\delta_i e_i},
\end{equation}
the $5^d$-element rectangular patch centered at $T$; by the stencil
in \eqref{eq:lambda-definition}, $\widetilde\Pi_hv|_T$ depends only
on $v|_{\Delta_T}$.
\end{df}

\begin{lemma}
\label{lem:lambda-t-identity}
For $\alpha\in\widetilde{\mathcal I}_{h,\mathrm{int}}^d$, set
\[
\mathcal S_{K_\alpha}
:=K_\alpha\cup\bigcup_{i=1}^d
\bigl(K_{\alpha-e_i}\cup K_{\alpha+e_i}\bigr).
\]
If $p\in P_2(\mathcal S_{K_\alpha})$, then
\[
\lambda_{K_\alpha}(p)=t_{K_\alpha}(p).
\]
\end{lemma}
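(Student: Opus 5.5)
Both sides of the identity are linear in $p$, so I would reduce the claim to a comparison of Taylor expansions about the barycenter $x_{K_\alpha}$. Write $p\in P_2(\mathcal S_{K_\alpha})$ as its exact quadratic expansion
\[
p(x)=p(x_{K_\alpha})+\nabla p(x_{K_\alpha})\cdot(x-x_{K_\alpha})+\tfrac12(x-x_{K_\alpha})^{\top}\nabla^2p\,(x-x_{K_\alpha}).
\]
By \eqref{eq:quadratic-cell-average}, $t_{K_\alpha}(p)=p(x_{K_\alpha})+(\tfrac1{24}-\tfrac16)\sum_i h_{\alpha,i}^2\partial_{ii}p=p(x_{K_\alpha})-\tfrac18\sum_i h_{\alpha,i}^2\partial_{ii}p$. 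Since $w_{\alpha,0}+\sum_i(w_{\alpha,i}^-+w_{\alpha,i}^+)=1$, I will rewrite
\[
\lambda_{K_\alpha}(p)=\fint_{K_\alpha}p\,dx+\sum_{i=1}^d\Bigl(w_{\alpha,i}^-\bigl(\fint_{K_{\alpha-e_i}}p-\fint_{K_\alpha}p\bigr)+w_{\alpha,i}^+\bigl(\fint_{K_{\alpha+e_i}}p-\fint_{K_\alpha}p\bigr)\Bigr).
\]
It then suffices to show that, for each $i$, the bracketed combination equals $-\tfrac16 h_{\alpha,i}^2\partial_{ii}p$.

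Next I compute the neighbor cell averages. The cell $K_{\alpha\pm e_i}$ is the translate of a box centered at $x_{K_\alpha}\pm s_\pm e_i$, with $s_-=\tfrac12(h_{\alpha-e_i,i}+h_{\alpha,i})$ and $s_+=\tfrac12(h_{\alpha,i}+h_{\alpha+e_i,i})$. Its widths agree with those of $K_\alpha$ except in direction $i$, where the width is $h_\pm:=h_{\alpha\pm e_i,i}$. Applying \eqref{eq:quadratic-cell-average} on each cell, only the $x_i$-dependence survives in the difference. Mixed second-derivative terms average to zero because the cells are symmetric in the other directions, and the $\partial_{jj}$ terms with $j\ne i$ cancel because the widths are equal. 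This gives
\[
\fint_{K_{\alpha\pm e_i}}p-\fint_{K_\alpha}p=\pm s_\pm\,\partial_ip+\tfrac12 s_\pm^2\,\partial_{ii}p+\tfrac1{24}(h_\pm^2-h_{\alpha,i}^2)\partial_{ii}p.
\]

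The remaining step is the main obstacle, a purely one-dimensional algebraic check. First, the $\partial_ip$ coefficient $-w^-_{\alpha,i}s_-+w^+_{\alpha,i}s_+$ must vanish. This is immediate, since $w^-_{\alpha,i}s_-=w^+_{\alpha,i}s_+=-h_{\alpha,i}^2/(2H)$ with $H:=h_-+h_{\alpha,i}+h_+$. Second, the $\partial_{ii}p$ coefficient must equal $-\tfrac16h_{\alpha,i}^2$. Writing $h=h_{\alpha,i}$, this reduces to
\[
-\frac{h^2}{2H}\Bigl(\tfrac12(s_-+s_+)+\frac{h_-^2-h^2}{24 s_-}+\frac{h_+^2-h^2}{24 s_+}\Bigr)=-\tfrac16h^2.
\]
Using $h_\pm^2-h^2=(h_\pm-h)(h_\pm+h)=2s_\pm(h_\pm-h)$, the bracket becomes $\tfrac14(h_-+2h+h_+)+\tfrac1{12}(h_-+h_+-2h)=\tfrac13 H$. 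The left side is therefore $-\tfrac{h^2}{6}$, as required.

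Summing over $i$ then gives $\lambda_{K_\alpha}(p)=\fint_{K_\alpha}p-\tfrac16\sum_ih_{\alpha,i}^2\partial_{ii}p=t_{K_\alpha}(p)$. I expect only the bookkeeping of the per-cell averages to need care, in particular checking that the $j\neq i$ second-derivative terms cancel exactly; everything else is the elementary identity above.
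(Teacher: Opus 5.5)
Your proof is correct and follows the paper's argument. You expand the neighbor cell-average differences about $x_{K_\alpha}$ via \eqref{eq:quadratic-cell-average}, check that the weights cancel the $\partial_i p$ term and produce $-\tfrac16 h_{\alpha,i}^2\partial_{ii}p$, and then use $w_{\alpha,0}+\sum_i(w_{\alpha,i}^-+w_{\alpha,i}^+)=1$. Your per-cell formula agrees with the paper's, since $\tfrac12 s_-^2+\tfrac1{24}(h_-^2-h_{\alpha,i}^2)=\tfrac1{12}(h_-+h_{\alpha,i})(2h_-+h_{\alpha,i})$, and you write out the final coefficient algebra that the paper only asserts.
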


The proof is given in Appendix~\ref{subsec:proof-lambda-t}.
The previous identity yields polynomial  preservation, stability, and
local approximation for $\widetilde\Pi_h$.

\begin{lemma}
\label{lem:auxiliary-interpolator}
For every $T\in\Kh$, the following properties hold.
\begin{enumerate}[label=(\alph*)]
\item If $v\in L^1(\widetilde\Omega_h)$ and
$v|_{\Delta_T}\in P_2(\Delta_T)$, then
\[
(\widetilde\Pi_hv)|_T=v|_T.
\]
\item For $0\leq k\leq2$,
\[
|\widetilde\Pi_hv|_{k,T}
\leq C h_T^{-k}\|v\|_{0,\Delta_T}.
\]
\item If $v\in H^s(\Delta_T)$, then
\[
|v-\widetilde\Pi_hv|_{k,T}
\leq C h_T^{s-k}|v|_{s,\Delta_T},
\qquad 0\leq k\leq s\leq3.
\]
\end{enumerate}
The constant $C$ is independent of $h$ and $T$.
\end{lemma}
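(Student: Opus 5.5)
For (a), fix $T=K_\kappa\in\Kh$. As in the proof of Lemma~\ref{lem:phi-basis-properties}, only $\varphi_\alpha$ with $\alpha_m\in\{\kappa_m-1,\kappa_m,\kappa_m+1\}$ are nonzero on $T$. For each such $\alpha$, the stencil $\mathcal S_{K_\alpha}$ of $\lambda_{K_\alpha}$ lies inside $\Delta_T$. Hence, if $v|_{\Delta_T}=p\in P_2(\Delta_T)$, Lemma~\ref{lem:lambda-t-identity} gives $\lambda_{K_\alpha}(v)=t_{K_\alpha}(p)$ for every such $\alpha$. Now extend $p$ to a polynomial on all of $\widetilde\Omega_h$. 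Replacing every coefficient by $t_{K_\alpha}(p)$ leaves $\widetilde\Pi_hv|_T$ unchanged, because the modified coefficients belong to patch functions vanishing on $T$. Thus
\[
\widetilde\Pi_hv|_T=\sum_\alpha t_{K_\alpha}(p)\varphi_\alpha|_T=p|_T
\]
by the second identity in Lemma~\ref{lem:phi-basis-properties}. That lemma is stated for $x\in\Omega$, but $T\subset\Omega$, so it applies.

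For (b), write $|\widetilde\Pi_hv|_{k,T}\le\sum_\alpha|\lambda_{K_\alpha}(v)|\,|\varphi_\alpha|_{k,T}$; the sum runs over at most $3^d$ indices. Lemma~\ref{lem:phi-local-bound} gives $|\varphi_\alpha|_{k,T}\le Ch_T^{d/2-k}$. For the coefficients, shape regularity makes all neighboring cell sizes comparable, and the extended layers copy the boundary cell sizes. Hence the weights satisfy $|w_{\alpha,i}^\pm|\le 1$ and $|w_{\alpha,0}|\le 1+2d$, uniformly. Each average satisfies $|\fint_{K}v|\le|K|^{-1/2}\|v\|_{0,K}\le Ch_T^{-d/2}\|v\|_{0,K}$, and every cell involved lies in $\Delta_T$. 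Therefore $|\lambda_{K_\alpha}(v)|\le Ch_T^{-d/2}\|v\|_{0,\Delta_T}$, and multiplying the two bounds gives (b).

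For (c), use the standard Bramble--Hilbert argument. The statement must be read with $s\ge2$ when $k=2$, and generally with $s\ge k$; for $s<3$ one reproduces $P_{s-1}\subset P_2$. Choose $p\in P_{s-1}(\Delta_T)$ to be an averaged Taylor polynomial on $\Delta_T$. Since $\Delta_T$ is a union of $5^d$ shape-regular rectangles, it is a rectangle of diameter $\sim h_T$, and
\[
|v-p|_{j,\Delta_T}\le Ch_T^{s-j}|v|_{s,\Delta_T},\qquad 0\le j\le s.
\]
By (a), $\widetilde\Pi_h p=p$ on $T$; since $\widetilde\Pi_h$ is linear and $\widetilde\Pi_hv|_T$ depends only on $v|_{\Delta_T}$, this gives
\[
|v-\widetilde\Pi_hv|_{k,T}\le|v-p|_{k,T}+|\widetilde\Pi_h(v-p)|_{k,T}.
\]
Apply (b) to the second term to get $Ch_T^{-k}\|v-p\|_{0,\Delta_T}\le Ch_T^{s-k}|v|_{s,\Delta_T}$. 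The first term is bounded directly by the Bramble--Hilbert estimate.

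The main obstacle is making every constant uniform: in the weight bounds, in Lemma~\ref{lem:phi-local-bound}, and in Bramble--Hilbert on $\Delta_T$. Each depends only on the mesh aspect ratios, including those of the extended cells. I would first verify that the extension in Definition~\ref{def:local-phi} preserves local quasi-uniformity.
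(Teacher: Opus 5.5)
Your proposal is correct and follows essentially the same route as the paper. Part (a) comes from Lemma~\ref{lem:lambda-t-identity} together with the $P_2$ reproduction of Lemma~\ref{lem:phi-basis-properties}; your explicit extension of $p$ to $\widetilde\Omega_h$ makes the paper's one-line argument rigorous. Part (b) comes from the coefficient bound $|\lambda_{K_\alpha}(v)|\le Ch_T^{-d/2}\|v\|_{0,\Delta_T}$ combined with Lemma~\ref{lem:phi-local-bound} over the $3^d$ contributing patches. Part (c) comes from the triangle inequality with a local polynomial and Bramble--Hilbert on $\Delta_T$.

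The only case you leave implicit is $k=3$. It is trivial, because $\widetilde\Pi_h(v-p)|_T\in P_2(T)$ has vanishing third derivatives.
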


\begin{proof}
Let $p\in P_2(\Delta_T)$. For every patch function contributing on
$T$, the stencil $\mathcal S_{K_\alpha}$ is contained in $\Delta_T$.
Thus Lemma~\ref{lem:lambda-t-identity} gives
$\lambda_{K_\alpha}(p)=t_{K_\alpha}(p)$, and
Lemma~\ref{lem:phi-basis-properties} yields
$(\widetilde\Pi_hp)|_T=p|_T$. This proves (a).

For (b), shape regularity and \eqref{eq:lambda-definition} imply
\[
|\lambda_{K_\alpha}(v)|
\leq C h_T^{-d/2}\|v\|_{0,\mathcal S_{K_\alpha}}
\]
for every $\alpha$ with $T\subset\mathcal M_{K_\alpha}$.
Since only $3^d$ such indices occur, Lemma~\ref{lem:phi-local-bound}
and the uniformly bounded overlap of the stencils give
\[
|\widetilde\Pi_hv|_{k,T}
\leq C h_T^{-k}\|v\|_{0,\Delta_T},
\qquad 0\leq k\leq2.
\]

Finally, for any $p\in P_2(\Delta_T)$,  (a) and (b) imply
\[
|v-\widetilde\Pi_hv|_{k,T}
\leq |v-p|_{k,T}
+C h_T^{-k}\|v-p\|_{0,\Delta_T},
\]
and the Bramble--Hilbert lemma \cite{brennerscott} on the uniformly
shape-regular patch $\Delta_T$ gives
\[
|v-\widetilde\Pi_hv|_{k,T}
\leq C h_T^{s-k}|v|_{s,\Delta_T},
\qquad 0\leq k\leq s\leq3.
\]
For $k>2$, the claim is trivial since
$\widetilde\Pi_hv|_T\in P_2(T)$. This proves (c).
\end{proof}
\subsubsection{Homogeneous interpolant}
\label{sec:interp-homogeneous}

The auxiliary operator contains contributions from patch functions
centered outside the original interior index set. Omitting these
contributions yields an operator mapping into $V_{h0}$.

\begin{df}
\label{def:pi-h0}
Define the homogeneous interpolation operator by
\begin{equation}
\label{eq:pi-h0-definition}
\Pi_{h0}:L^1(\Omega)\longrightarrow V_{h0},
\qquad
\Pi_{h0}v
:=\sum_{\alpha\in\IKhint}
\lambda_{K_\alpha}(v)\varphi_\alpha.
\end{equation}
\end{df}
{
Each coefficient $\lambda_{K_\alpha}(v)$ uses only the $2d+1$ cells in
\eqref{eq:lambda-definition}; since $\Pi_{h0}$ involves only cells of
the original mesh, $\widetilde\Pi_h$ and $\Pi_{h0}$ can differ only
near $\partial\Omega$.
}

\subsubsection{Approximation estimate}
\label{sec:interp-approx}

The local estimate for $\widetilde\Pi_h$ now transfers to
$\Pi_{h0}$. Only the boundary layer requires an additional argument. 

\begin{theorem}
\label{thm:pi-h0-error}
For $v\in H_0^2(\Omega)\cap H^3(\Omega)$,
\[
|v-\Pi_{h0}v|_{k,h}
\leq C h^{3-k}|v|_{3,\Omega},
\qquad
0\leq k\leq3,
\]
where $C$ is independent of $h$ and $v$.
\end{theorem}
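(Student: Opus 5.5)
Extend $v$ by zero outside $\Omega$; since $v\in H_0^2(\Omega)$, the extension $\tilde v$ lies in $H^2(\widetilde\Omega_h)$. It is not in $H^3$ across $\partial\Omega$, so I will apply Lemma~\ref{lem:auxiliary-interpolator}(c) with $s=3$ only away from the boundary. On each $T\in\Kh$ I split
\[
v-\Pi_{h0}v=(v-\widetilde\Pi_h\tilde v)+(\widetilde\Pi_h\tilde v-\Pi_{h0}v).
\]
For elements $T$ whose patch $\Delta_T$ lies inside $\overline\Omega$, both $\lambda_{K_\alpha}$ and the set of contributing $\alpha$ agree with those of $\Pi_{h0}$. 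This holds because every contributing $\alpha$ satisfies $\mathcal S_{K_\alpha}\subset\Delta_T\subset\overline\Omega$, and $\alpha$ is interior (its patch $\mathcal M_{K_\alpha}\subset\Delta_T$). Hence the second term vanishes there, and Lemma~\ref{lem:auxiliary-interpolator}(c) gives $|v-\Pi_{h0}v|_{k,T}\le Ch^{3-k}|v|_{3,\Delta_T}$. Summing over $T$ with bounded overlap of the $\Delta_T$ yields the interior contribution.

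The main obstacle is the boundary layer of elements within two cells of $\partial\Omega$. There, $\Pi_{h0}v|_T=\sum\lambda_{K_\alpha}(v)\varphi_\alpha$ over interior $\alpha$ only, and these coefficients involve cell averages of $v$ on original cells.

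My plan is to compare directly with $v$ rather than with $\widetilde\Pi_h\tilde v$. I would take $p$ the quadratic Taylor/averaged polynomial of $v$ on $\Delta_T\cap\Omega$. I would then use that the missing coefficients (exterior $\alpha$) would be $t_{K_\alpha}(p)$ in the $P_2$ reproduction of Lemma~\ref{lem:phi-basis-properties}. So
\[
(p-\Pi_{h0}v)|_T=\sum_{\alpha\notin\IKhint}t_{K_\alpha}(p)\varphi_\alpha+\sum_{\alpha\in\IKhint}\lambda_{K_\alpha}(p-v)\varphi_\alpha .
\]
The second sum is bounded through the $L^2$ stability of $\lambda_{K_\alpha}$ and Lemma~\ref{lem:phi-local-bound} by $Ch^{-k}\|v-p\|_{0,\Delta_T\cap\Omega}\le Ch^{3-k}|v|_{3,\Delta_T\cap\Omega}$, using Bramble--Hilbert on the shape-regular box $\Delta_T\cap\Omega$. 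For the first sum, $|t_{K_\alpha}(p)|$ is controlled by $\|p\|_{L^\infty}$ near the boundary together with $h^2|p|_{2}$.

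The key point is that boundary traces vanish: $v=\partial_\nu v=0$ on $\partial\Omega$. Hence on the strip of width $O(h)$ near $\partial\Omega$ one has $\|v\|_{0,S}\le Ch^2\|\partial_\nu^2v\|_{0,S}$ type estimates. Combining them with $v-p$ small, I expect to get $|t_{K_\alpha}(p)|\le Ch^{3-d/2}|v|_{3,\Delta_T\cap\Omega}$. Concretely, for a boundary face $x_i=x_{i,1}$, I would show $p$ and $\partial_ip$ are $O(h^{3-d/2}|v|_3)$ on it in averaged sense, since they approximate the zero traces. I would then write $p$ in one dimension as $\tfrac12\partial_i^2p\,(x_i-x_{i,1})^2+\text{small}$ and check, via the explicit $t_{K_\alpha}$ on the reflected cells of equal width, that $t_{K_\alpha}$ of the pure term $(x_i-x_{i,1})^2$ is annihilated by the combination $\sum_{\alpha\notin\IKhint}t_{K_\alpha}(\cdot)\varphi_\alpha$ on $T$. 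Failing that exact cancellation, I would at least bound this sum by $Ch^{2}|p|_{2}$, which gives only $O(h^{2-k})$. So the cancellation, using the symmetric extension $x_{i,0}=2x_{i,1}-x_{i,2}$, is the crux and is what I would verify first.

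Finally, I add $|v-p|_{k,T}\le Ch^{3-k}|v|_{3}$, sum over boundary elements with bounded overlap, and combine with the interior part to obtain $|v-\Pi_{h0}v|_{k,h}\le Ch^{3-k}|v|_{3,\Omega}$ for $0\le k\le 2$. The case $k=3$ is immediate, since $\Pi_{h0}v$ is piecewise quadratic.
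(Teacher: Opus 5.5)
Your splitting works, and it is a slightly leaner variant of the paper's argument. The paper passes through the Sobolev extension $\mathscr X$ and $\widetilde\Pi_h\mathscr Xv$. You instead reproduce a Bramble--Hilbert polynomial $p$ of $v$ on $\Delta_T\cap\Omega$ with the full extended family of Lemma~\ref{lem:phi-basis-properties}, so the bound comes out directly in $|v|_{3,\Omega}$; your zero extension $\tilde v$ is never actually used.

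\textbf{The gap.} The proposal stops exactly at the step that carries the theorem. You only ``expect'' the bound $|t_{K_\alpha}(p)|\le Ch^{3-d/2}|v|_{3,\Delta_T\cap\Omega}$ for $\alpha\notin\IKhint$, and your fallback gives only $O(h^{2-k})$. This estimate is the whole content of the paper's Lemma~\ref{lem:boundary-trace}, so as written the argument is incomplete. Moreover, the cancellation cannot be organized as you describe, for one fixed face $x_i=x_{i,1}$ inside the combined sum over all exterior $\alpha$. Near an edge or corner, an $\alpha$ that is exterior only in a direction $j\neq i$ can have $\alpha_i\in\{2,3\}$. Then $r_{K_\alpha}((x_i-x_{i,1})^2)=(x_{i,\alpha_i}-x_{i,1})(x_{i,\alpha_i+1}-x_{i,1})\neq0$, since $r_{K_\alpha}(x_i^2)=x_{i,\alpha_i}x_{i,\alpha_i+1}$.

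\textbf{How to close it.} The cancellation works coefficient by coefficient. Since $t_{K_\alpha}(p)=r_{K_\alpha}(p)$ for $p\in P_2$, the identity above gives $r_{K_\alpha}((x_i-x_{i,1})^2)=0$ whenever $\alpha_i\in\{0,1\}$. The same holds with $x_{i,N_i+1}$ when $\alpha_i\in\{N_i,N_i+1\}$. No equal-width extension is needed for this. The argument then runs as follows.
\begin{enumerate}
\item For each $\alpha\notin\IKhint$, pick a direction $i$ in which $\alpha$ is exterior. Set $\hat x:=x+(x_{i,1}-x_i)e_i$ and write $p=g_\alpha+\tfrac12\partial_i^2p\,(x_i-x_{i,1})^2$, with $g_\alpha(x):=p(\hat x)+(x_i-x_{i,1})\,\partial_ip(\hat x)$.
\item The quadratic term is annihilated, so $|t_{K_\alpha}(p)|=|r_{K_\alpha}(g_\alpha)|\le C\|g_\alpha\|_{0,\infty,K_\alpha}$.
\item By norm equivalence for quadratics on comparable boxes, this is at most $C(\|p\|_{0,\infty,F}+h_T\|\partial_ip\|_{0,\infty,F})$, where $F\subset\partial\Omega$ is a face of $\Delta_T\cap\Omega$ near $K_\alpha$ lying in the hyperplane $x_i=x_{i,1}$.
\item Apply the face inverse estimate \eqref{eq:face-inverse} and the trace inequality \eqref{eq:element-trace} to $v-p$, using $v=\partial_iv=0$ on $F$. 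This yields $Ch^{3-d/2}|v|_{3,\Delta_T\cap\Omega}$.
\end{enumerate}
The strip estimate $\|v\|_{0,S}\le Ch^2\|\partial_\nu^2v\|_{0,S}$ is not what is needed here.

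\textbf{Comparison with the paper.} The paper obtains the same cancellation in one stroke. It Taylor-expands $r_{K_\alpha}(p)$ at a vertex $a_\alpha\in\partial\Omega$ of $K_\alpha$, so all pure second derivatives cancel simultaneously. What remains are $p(a_\alpha)$, $\nabla p(a_\alpha)$ and the mixed derivatives $\partial_{ij}p$. Each of these approximates a trace that vanishes for $v\in H_0^2(\Omega)$.
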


The boundary layer is controlled by the following lemma, which isolates
the trace argument. Its proof is deferred to
Appendix~\ref{subsec:boundary-trace-proof}. 
Finally, since $\Omega$ is Lipschitz,
\cite[Theorem~1.4.5]{brennerscott} provides an extension operator
$\mathscr X$ such that, for $0\leq s\leq3$,
\begin{equation}
\label{eq:sobolev-extension}
(\mathscr Xv)|_\Omega=v,
\qquad
\|\mathscr Xv\|_{s,\mathbb R^d}\leq C\|v\|_{s,\Omega}.
\end{equation}

\begin{lemma}
\label{lem:boundary-trace}
{
Let $v\in H_0^2(\Omega)\cap H^3(\Omega)$, let $\mathscr X$ be the
extension operator in \eqref{eq:sobolev-extension}, and let $T\in\Kh$.
For $\alpha\in\mathcal A_T:=\{\alpha\in
\widetilde{\mathcal I}_{h,\mathrm{int}}^d\setminus\IKhint:
T\subset\mathcal M_{K_\alpha}\}$, let $a_\alpha$ be a vertex of
$K_\alpha$ on $\partial\Omega$; such a vertex exists because
$\alpha\notin\IKhint$. Then, for every $p\in P_2(\Delta_T)$,
}
\begin{equation}
\label{eq:boundary-taylor}
\lambda_{K_\alpha}(p)
=
p(a_\alpha)
+\sum_{i=1}^d
\delta_{\alpha,i}\frac{h_{\alpha,i}}2
\frac{\partial p}{\partial x_i}(a_\alpha)
+\sum_{1\leq i<j\leq d}
\delta_{\alpha,i}\delta_{\alpha,j}
\frac{h_{\alpha,i}h_{\alpha,j}}4
\frac{\partial^2p}{\partial x_i\partial x_j}(a_\alpha),
\end{equation}
where $\delta_{\alpha,i}\in\{-1,1\}$. Consequently, if $p$ is chosen as
the local quadratic Bramble--Hilbert approximation of $\mathscr Xv$ on
$\Delta_T$ \cite{brennerscott}, then
\begin{equation}
\label{eq:boundary-coefficient-s3}
|\lambda_{K_\alpha}(p)|
\leq C h_T^{3-d/2}|\mathscr Xv|_{3,\Delta_T}.
\end{equation}
The proof is given in Appendix~\ref{subsec:boundary-trace-proof}.
\end{lemma}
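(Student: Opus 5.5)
The identity \eqref{eq:boundary-taylor} will come from reducing $\lambda_{K_\alpha}$ to the point functional $r_{K_\alpha}$ on quadratics and Taylor-expanding about the boundary vertex $a_\alpha$. The bound \eqref{eq:boundary-coefficient-s3} will then follow from the vanishing traces of $v$ and $\nabla v$ on $\partial\Omega$, combined with a scaled trace inequality and an inverse estimate.

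\emph{Step 1: the identity.} Write $T=K_\kappa$. Since $T\subset\mathcal M_{K_\alpha}$, we have $|\alpha_m-\kappa_m|\le1$ for every $m$. The stencil $\mathcal S_{K_\alpha}$ therefore lies in $\Delta_T$, and $p\in P_2(\mathcal S_{K_\alpha})$. Lemma~\ref{lem:lambda-t-identity} and the identity $t_{K_\alpha}=r_{K_\alpha}$ on $P_2$ (from \eqref{eq:quadratic-cell-average}) give
\[
\lambda_{K_\alpha}(p)=p(x_{K_\alpha})-\tfrac18\sum_i h_{\alpha,i}^2\,\partial_{ii}p .
\]
Since $a_\alpha$ is a vertex of $K_\alpha$, there are signs $\delta_{\alpha,i}\in\{-1,1\}$ with $x_{K_\alpha}=a_\alpha+\sum_i\delta_{\alpha,i}\tfrac{h_{\alpha,i}}2e_i$. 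The exact second-order Taylor expansion of $p$ about $a_\alpha$ then has the following structure:
\begin{itemize}
\item the diagonal Hessian terms equal $\tfrac18 h_{\alpha,i}^2\partial_{ii}p$ and cancel the correction term exactly;
\item the off-diagonal terms, counted twice and multiplied by $\tfrac12$, give $\delta_{\alpha,i}\delta_{\alpha,j}\tfrac{h_{\alpha,i}h_{\alpha,j}}4\partial_{ij}p(a_\alpha)$ for $i<j$.
\end{itemize}
This yields \eqref{eq:boundary-taylor}. The essential point is that no pure second derivative survives.

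\emph{Step 2: choice of $p$ and a boundary face.} Set $w:=\mathscr Xv$, and let $p\in P_2(\Delta_T)$ be the Bramble--Hilbert approximation, so that
\[
|w-p|_{k,\Delta_T}\le Ch_T^{3-k}|w|_{3,\Delta_T},\qquad 0\le k\le 3 .
\]
We have $a_\alpha\in\partial\Omega$, and $K_\alpha$ lies within one layer of $T$. Hence some original cell $K'\in\Kh$ with vertex $a_\alpha$ has a $(d-1)$-face $G\subset\partial\Omega$ through $a_\alpha$, and both $K'$ and $G$ lie in $\Delta_T$. By shape regularity, $|G|\sim h_T^{d-1}$. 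Let $x_i$ be the normal direction of $G$. Because $v\in H_0^2(\Omega)$, the traces satisfy $w=0$ and $\nabla w=0$ on $G$. Consequently $p=p-w$ and $\nabla p=\nabla(p-w)$ on $G$.

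\emph{Step 3: trace and inverse estimates.} The scaled trace inequality on $K'$ gives
\[
\|p-w\|_{0,G}\le C\bigl(h_T^{-1/2}\|p-w\|_{0,K'}+h_T^{1/2}|p-w|_{1,K'}\bigr)\le Ch_T^{5/2}|w|_{3,\Delta_T},
\]
and likewise $\|\nabla(p-w)\|_{0,G}\le Ch_T^{3/2}|w|_{3,\Delta_T}$. Now $p|_G$ and $\nabla p|_G$ are polynomials on $G$, so inverse estimates on $G$ apply:
\begin{itemize}
\item $|p(a_\alpha)|\le Ch_T^{-(d-1)/2}\|p\|_{0,G}$;
\item $h_T|\nabla p(a_\alpha)|\le Ch_T^{1-(d-1)/2}\|\nabla p\|_{0,G}$;
\item for the mixed derivatives $\partial_{jl}p$ with $j\ne l$: if both indices are tangential to $G$, it is a tangential second derivative of $p|_G$, bounded by $h_T^{-2}$ times the sup of $p$ on $G$; if one index equals $i$, it is a tangential derivative of $\partial_ip|_G$, bounded via $\|\nabla p\|_{0,G}$.
\end{itemize}
Each term in \eqref{eq:boundary-taylor} is thus bounded by $Ch_T^{3-d/2}|w|_{3,\Delta_T}$, which is \eqref{eq:boundary-coefficient-s3}. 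The pure normal derivative $\partial_{ii}p$, which the traces cannot control, never enters; this is exactly why Step 1 must be exact.

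The main obstacle I expect is the geometric bookkeeping in Step 2. One must verify that an honest boundary face $G$ of size comparable to $h_T$, containing $a_\alpha$, lies inside $\Delta_T$ (including corner cases where $a_\alpha$ sits on several faces, in which case any one face suffices). One must also check that the trace and inverse constants are uniform under shape regularity of the extended mesh.
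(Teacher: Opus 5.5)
Your proposal is correct and follows the paper's proof: you reduce $\lambda_{K_\alpha}$ to $r_{K_\alpha}$ via Lemma~\ref{lem:lambda-t-identity}, Taylor-expand about $a_\alpha$ so that the pure second derivatives cancel, and then combine the vanishing boundary traces with the face inverse estimate, the scaled trace inequality on an adjacent element, and the Bramble--Hilbert approximation on $\Delta_T$. The one minor deviation is in the mixed second-derivative terms: you bound them by tangential inverse estimates for $p|_G$ and $\partial_i p|_G$, which needs only the traces of $v$ and $\nabla v$, whereas the paper uses the vanishing traces of the mixed second derivatives of $v$ and applies the trace inequality to $D^2(\mathscr Xv-p)$; the geometric point you flag is routine, since writing $a_\alpha=(x_{l,\rho_l})_l$ with $\rho_l\in\{\alpha_l,\alpha_l+1\}$ and choosing $\kappa'_l\in\{\rho_l-1,\rho_l\}\cap[1,N_l]$ gives a cell of $\Kh$ inside $\Delta_T$ with a boundary face through $a_\alpha$.
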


\begin{proof}[Proof of Theorem~\ref{thm:pi-h0-error}]
Let $\mathscr X$ be the extension operator in
\eqref{eq:sobolev-extension}. Lemma~\ref{lem:auxiliary-interpolator}(c),
finite overlap, and \eqref{eq:sobolev-extension} give
\begin{equation}
\label{eq:auxiliary-global-error}
|\mathscr Xv-\widetilde\Pi_h\mathscr Xv|_{k,h}
\leq C h^{3-k}|v|_{3,\Omega}.
\end{equation}
Hence it remains to estimate
$\widetilde\Pi_h\mathscr Xv-\Pi_{h0}v$, which is supported only in
the boundary layer.

For $T\in\Kh$, let $\mathcal A_T$ be as in
Lemma~\ref{lem:boundary-trace}. 
If $\mathcal A_T=\varnothing$, the two interpolants coincide on $T$;
in particular, by Lemma~\ref{lem:auxiliary-interpolator}(a),
$\Pi_{h0}v=v$ on $T$ whenever $v|_{\Delta_T}\in P_2(\Delta_T)$.
Otherwise,
\begin{equation}
\label{eq:boundary-correction}
(\widetilde\Pi_h\mathscr Xv-\Pi_{h0}v)|_T
=
\sum_{\alpha\in\mathcal A_T}
\lambda_{K_\alpha}(\mathscr Xv)\varphi_\alpha|_T.
\end{equation}
For any $p\in P_2(\Delta_T)$, Lemma~\ref{lem:phi-local-bound} and the
local stability argument above give
\begin{equation}
\label{eq:boundary-splitting}
\left|
\widetilde\Pi_h\mathscr Xv-\Pi_{h0}v
\right|_{k,T}
\leq
C h_T^{-k}\|\mathscr Xv-p\|_{0,\Delta_T}
+
C h_T^{d/2-k}
\left(\sum_{\alpha\in\mathcal A_T}
|\lambda_{K_\alpha}(p)|^2\right)^{1/2}.
\end{equation}

Choosing $p$ as in Lemma~\ref{lem:boundary-trace}, substituting
\eqref{eq:boundary-coefficient-s3} and the same Bramble--Hilbert
estimate \cite{brennerscott} into \eqref{eq:boundary-splitting} yields
\[
|\widetilde\Pi_h\mathscr Xv-\Pi_{h0}v|_{k,T}
\leq C h_T^{3-k}|\mathscr Xv|_{3,\Delta_T}.
\]
Summing over the boundary elements and using finite overlap gives
\begin{equation}
\label{eq:boundary-global-s3}
|\widetilde\Pi_h\mathscr Xv-\Pi_{h0}v|_{k,h}
\leq C h^{3-k}|v|_{3,\Omega}.
\end{equation}
Combining this estimate with \eqref{eq:auxiliary-global-error} completes
the proof for $0\le k\le2$. For $k=3$, since
$\Pi_{h0}v|_T\in P_2(T)$, one has
$|v-\Pi_{h0}v|_{3,T}=|v|_{3,T}$, and the estimate is trivial.
\end{proof}

\subsection{Consistency error estimation}\label{sec:consistency-error}

The interpolation estimate is complemented by a consistency estimate.
The key observation is that the vertex-sum continuity also controls the
averages of the tangential derivatives across element faces.

\begin{lemma}
\label{lem:gradient-average-continuity}
Let $v_h\in V_{h0}$.  For every interior face
$F_\beta^i\in\Fhddint$, whose two adjacent elements are
$K_\beta$ and $K_{\beta-e_i}$, one has
\begin{equation}
\label{eq:interior-gradient-average-jump}
\int_{F_\beta^i}
\left(
\nabla(v_h|_{K_\beta})-
\nabla(v_h|_{K_{\beta-e_i}})
\right)ds=0.
\end{equation}
If $F_\beta^i\in\Fhddb$ and $K_\alpha$ is its unique adjacent
element, then
\begin{equation}
\label{eq:boundary-gradient-average}
\int_{F_\beta^i}\nabla(v_h|_{K_\alpha})\,ds=0.
\end{equation}
\end{lemma}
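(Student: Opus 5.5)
We split the gradient into its normal component $\partial/\partial x_i$ and its tangential components $\partial/\partial x_j$, $j\ne i$, and treat them separately.

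\emph{Normal component.} For an interior face $F_\beta^i$, the normal component of \eqref{eq:interior-gradient-average-jump} is exactly the continuity of $\fint_{F}\partial v_h/\partial\nu_F\,ds$ built into \eqref{ncp2-space}, multiplied by $|F_\beta^i|$. On a boundary face it is the homogeneous condition in \eqref{ncp2-space-homogeneous}. Nothing more is needed here.

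\emph{Tangential components.} Fix $j\ne i$ and let $p$ denote the restriction of $v_h$ to one of the adjacent elements; then $p|_{F}\in P_2(F)$ with $F=F_\beta^i$. Since $w:=\partial p/\partial x_j$ is affine, $\fint_F w\,ds$ equals the value of $w$ at the barycenter of $F$. We express this as an exact centered difference in the $x_j$-direction on the $(d-1)$-dimensional rectangle $F$, as in the proof of Lemma~\ref{lem:p2-b-a-compatibility}. The face $F$ has two $x_j$-faces, namely the $(d-2)$-faces $E^{j}_{-}:=E_{\beta}^{i,j}$ and $E^{j}_{+}:=E_{\beta+e_j}^{i,j}$. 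Because $p|_F$ is quadratic, the difference of its values at corresponding vertices of $E^j_+$ and $E^j_-$, divided by $h_{j,\beta_j}$, is exactly $\partial_j p$ at the midpoint of the connecting edge. Summing over the $2^{d-2}$ vertex pairs and applying Lemma~\ref{lem:p1-face-sum-vertex} in dimension $d-2$ to the affine function $w$ on the mid-section of $F$ gives
\[
\fint_{F}\frac{\partial p}{\partial x_j}\,ds=\frac{\Sigma_{E^j_+}(p)-\Sigma_{E^j_-}(p)}{2^{d-2}\,h_{j,\beta_j}}.
\]
The same identity holds with $p$ replaced by the restriction of $v_h$ to the other element adjacent to $F$.

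By the definition of $V_h$, $\Sigma_E(v_h)$ is continuous across every $(d-2)$-face $E$. In particular, both $E^j_\pm$ are common to the two elements $K_\beta$ and $K_{\beta-e_i}$, so the two one-sided values of each vertex sum agree. The right-hand side above is therefore the same from both sides, and the tangential components of \eqref{eq:interior-gradient-average-jump} vanish.

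For a boundary face $F\in\Fhddb$, both $E^j_\pm$ lie in $\partial\Omega$, so they belong to $\Ehddb$. Their vertex sums vanish by \eqref{ncp2-space-homogeneous}, and hence so do the tangential averages; together with the normal part this gives \eqref{eq:boundary-gradient-average}.

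The only point needing care is the exactness of the centered-difference identity: it holds precisely because $p$ is quadratic in $x_j$ along each edge, so the divided difference equals the derivative at the edge midpoint. One also has to note that the two $(d-2)$-faces $E^j_\pm$ are shared by both neighboring elements. Both facts are already used in Lemma~\ref{lem:p2-b-a-compatibility}, so I expect no real obstacle.
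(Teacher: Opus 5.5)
Your proposal is correct and is essentially the paper's proof. You read the normal component off the definition of $V_{h0}$, and you identify each tangential average, via the exact centered difference in $x_j$ and Lemma~\ref{lem:p1-face-sum-vertex} on the $(d-2)$-dimensional mid-section, with $\bigl(\Sigma_{E_{\beta+e_j}^{i,j}}(p)-\Sigma_{E_\beta^{i,j}}(p)\bigr)/(2^{d-2}h_{j,\beta_j})$; vertex-sum continuity and the vanishing on $\Ehddb$ then give both \eqref{eq:interior-gradient-average-jump} and \eqref{eq:boundary-gradient-average}, exactly as in the paper.
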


\begin{proof}
Recall that $x_{F_\beta^i}$ is the barycenter of $F_\beta^i$. For a
tangential direction $j\ne i$ and every $p\in P_2(K_\alpha)$, where
$F_\beta^i$ is a face of $K_\alpha$, the following identity holds:
\begin{equation}
\label{eq:tangential-average-by-face-sums}
\fint_{F_\beta^i}\frac{\partial p}{\partial x_j}\,ds
=\frac{\partial p}{\partial x_j}(x_{F_\beta^i})
=\frac{
\vsum{E_{\beta+e_j}^{i,j}}(p)-\vsum{E_\beta^{i,j}}(p)
}{2^{d-2}h_{j,\beta_j}}.
\end{equation}
The first equality follows because $\partial p/\partial x_j$ is affine
on $F_\beta^i$. For each $z\in\mathcal V(E_\beta^{i,j})$, the exact
centered-difference formula in the $x_j$-direction gives
\[
p\left(z+h_{j,\beta_j}e_j\right)-p(z)
=
h_{j,\beta_j}
\frac{\partial p}{\partial x_j}
\left(z+\frac{h_{j,\beta_j}}2e_j\right).
\]
Summing over $z\in\mathcal V(E_\beta^{i,j})$ and applying
Lemma~\ref{lem:p1-face-sum-vertex} to the resulting $(d-2)$-dimensional
face lead to \eqref{eq:tangential-average-by-face-sums}.

Equation~\eqref{eq:tangential-average-by-face-sums} and the
continuity  of the vertex-sums across $(d-2)$-faces imply that the integrals of all tangential
derivatives are continuous across interior faces and vanish on
boundary faces. By the definition of $V_{h0}$, the same property
holds for the normal derivative. Hence
\eqref{eq:interior-gradient-average-jump} and
\eqref{eq:boundary-gradient-average} follow.
\end{proof}
\begin{remark}
The proof of Lemma~\ref{lem:gradient-average-continuity} shows that the continuity conditions defining $V_h$ and $V_{h0}$ in \eqref{ncp2-space} and \eqref{ncp2-space-homogeneous}, respectively, can be equivalently expressed in terms of the $(d-1)$-face vertex sums and the face-averaged gradients.
\end{remark}\begin{theorem}[Consistency error estimate]
\label{thm:consistency-error}
Let $\Kh$ be the shape-regular rectangular mesh defined above.
There exists a constant $C$, independent of $h$, $v$, and $v_h$,
such that, for every
$v\in H_0^2(\Omega)\cap H^3(\Omega)$ satisfying
$\Delta^2v\in L^2(\Omega)$,
\begin{equation}
\label{eq:consistency-error}
\Big|
\sum_{K\in\Kh}(\nabla^2v,\nabla^2v_h)_K
-(\Delta^2v,v_h)
\Big|
\leq Ch\left(
|v|_{3,\Omega}+h\|\Delta^2v\|_{0,\Omega}
\right)|v_h|_{2,h},
\quad \forall v_h\in V_{h0}.
\end{equation}
\end{theorem}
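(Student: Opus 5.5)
The plan is the standard nonconforming consistency argument, with one twist: an $H^1$-conforming companion of $v_h$ absorbs the lower-order term $(\Delta^2v,v_h)$. The gradient-average continuity of Lemma~\ref{lem:gradient-average-continuity} then controls the boundary terms. Let $\sigma=\nabla^2 v$, so that $\sigma\in H^1$ and $\operatorname{div}\operatorname{div}\sigma=\Delta^2 v\in L^2$.

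\textbf{Step 1: introduce a conforming companion and split the error.} Let $w_h:=E_hv_h\in H_0^1(\Omega)$ be the conforming $Q_1$ (or $Q_2$) companion obtained by averaging the nodal values of $v_h$ over the elements sharing each node, with zero values on $\partial\Omega$. We will need
\[
|v_h-w_h|_{m,h}\le Ch^{2-m}|v_h|_{2,h},\qquad m=0,1.
\]
This follows from the standard jump-bounding argument, provided the jumps of $v_h$ and of $\nabla v_h$ across faces have vanishing means up to controlled terms. Lemma~\ref{lem:gradient-average-continuity} gives exactly the zero-mean jump of $\nabla v_h$. The vertex-sum continuity, combined with $P_2$ and the zero-mean gradient jump, should give zero-mean jumps of $v_h$ itself. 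Hence a Poincaré-type argument on the jumps yields the bound above.

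Writing $(\Delta^2v,v_h)=(\Delta^2v,v_h-w_h)+(\Delta^2v,w_h)$, the first term is bounded by $Ch^2\|\Delta^2 v\|_{0}|v_h|_{2,h}$. For the second term, integrate by parts once; $w_h\in H_0^1$, so no boundary terms arise:
\[
(\Delta^2 v,w_h)=-(\operatorname{div}\sigma,\nabla w_h).
\]

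\textbf{Step 2: elementwise integration by parts.} On each element,
\[
(\sigma,\nabla^2 v_h)_K=-(\operatorname{div}\sigma,\nabla v_h)_K+\int_{\partial K}\sigma\nu\cdot\nabla v_h\,ds .
\]
Summing over $K\in\Kh$ and combining with Step 1, the consistency functional equals
\[
-(\operatorname{div}\sigma,\nabla_h(v_h-w_h))+\sum_{F\in\Fhdd}\int_F \sigma\nu_F\cdot[\![\nabla v_h]\!]\,ds+O\bigl(h^2\|\Delta^2v\|_0|v_h|_{2,h}\bigr).
\]
On boundary faces the jump $[\![\nabla v_h]\!]$ means the one-sided trace. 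The first term is bounded by $\|v\|_{3}\,h|v_h|_{2,h}$ using the estimate from Step 1, since $\operatorname{div}\sigma$ involves third derivatives of $v$.

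\textbf{Step 3: face terms, which are the main obstacle.} By Lemma~\ref{lem:gradient-average-continuity}, $\int_F[\![\nabla v_h]\!]\,ds=0$ on every interior and boundary face. So on each face we may subtract the face average $P_0^F\sigma$:
\[
\int_F(\sigma-P_0^F\sigma)\nu_F\cdot\bigl([\![\nabla v_h]\!]-P_0^F[\![\nabla v_h]\!]\bigr)\,ds .
\]
The standard trace and Poincaré estimates give
\[
\|\sigma-P_0^F\sigma\|_{0,F}\le Ch^{1/2}|\sigma|_{1,K},
\]
and similarly $\|\nabla v_h-P_0^F\nabla v_h\|_{0,F}\le Ch^{1/2}|v_h|_{2,K}$ on each side. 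Summing over faces with Cauchy--Schwarz gives $Ch|v|_{3}|v_h|_{2,h}$.

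Collecting all terms proves \eqref{eq:consistency-error}. The delicate point is the companion estimate in Step 1, specifically the zero-mean jump of $v_h$ itself. If that fails, the fallback is to avoid $w_h$: integrate by parts twice elementwise and treat the resulting term $\sum_F\int_F \partial_\nu\Delta v\,[\![v_h]\!]$. For that term we would need $\int_F[\![v_h]\!]=O(h^2)\,|v_h|_{2}$-controlled, which can be derived from the vertex-sum continuity together with the $P_2$ quadrature relations of Lemma~\ref{lem:p1-face-sum-vertex}. This fallback, however, demands $v\in H^4$ locally and is therefore less desirable.
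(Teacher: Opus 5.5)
Your Steps 2 and 3 are the standard argument the paper invokes (Lemma~\ref{lem:gradient-average-continuity} plus the Wang--Xu trace estimate), and they are correct. The gap is in Step 1. Your mechanism for the companion estimate is that vertex-sum continuity plus $P_2$ forces $\int_F[v_h]\,ds=0$, and this is false. In $d=2$ the jump on an edge is a quadratic vanishing at both endpoints, $[v_h]=\tfrac12[\partial_{tt}v_h](t-a)(t-b)$. Its mean is $-\tfrac{h_F^2}{12}[\partial_{tt}v_h]$, and nothing forces this to vanish. Concretely, take the patch function $\varphi_{(2,2)}$ of Definition~\ref{def:patch-function} on the uniform unit $3\times3$ patch with nodes $0,1,2,3$. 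Then $\varphi|_{[0,1]^2}=\tfrac18(x_1+x_2)(x_1+x_2-1)$ and $\varphi|_{[1,2]\times[0,1]}=-\tfrac14(x_1-1)^2+\tfrac14(x_1-1)+\tfrac14x_2^2$. The jump across the edge $\{1\}\times[0,1]$ is therefore $\tfrac18x_2(x_2-1)$, with mean $-\tfrac1{48}$, although the gradient jump does have zero mean. Your fallback does not rescue this: it needs traces of $\nabla\Delta v$, i.e.\ more than $H^3$, so it does not prove the theorem as stated.

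The companion estimate itself is nevertheless true. The missing idea is to bound the jump directly through the vertex sums rather than through its mean. For an interior face $F$, $[v_h]|_F\in P_2(F)$ has zero vertex sum on every $(d-2)$-face of $F$, by continuity of $\Sigma_E$. For $F\subset\partial\Omega$ the same holds for $v_h|_F$, by the conditions on $\Ehddb$. Lemma~\ref{lem:p1-zero-face-sums}, applied in dimension $d-1$, shows that the only affine function on $F$ with these properties is $0$. Hence $q\mapsto|\nabla_F^2q|$ is a norm on this subspace of $P_2(F)$, and scaling gives
\[
\|[v_h]\|_{0,\infty,F}\le Ch_F^2\,\bigl|[\nabla^2v_h]\bigr|\le Ch^{2-d/2}\bigl(|v_h|_{2,K^+}+|v_h|_{2,K^-}\bigr).
\]
Now chain through the faces containing a vertex $z$. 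This bounds every nodal discrepancy $v_h|_K(z)-v_h|_{K'}(z)$, and every boundary nodal value $v_h|_K(z)$, by $Ch^{2-d/2}$ times local $|v_h|_2$. Together with the $P_1$-reproduction of the local $Q_1$ interpolant, this yields $|v_h-E_hv_h|_{m,h}\le Ch^{2-m}|v_h|_{2,h}$ for $m=0,1$. Zero-mean gradient jumps are not needed for this step. With this replacement, your Steps 1--3 give \eqref{eq:consistency-error}.
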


\begin{proof}
Lemma~\ref{lem:gradient-average-continuity} provides the continuity of
the face-averaged derivatives required in the standard consistency argument. Elementwise
integration by parts, together with the trace estimate as in
\cite[Lemma~7]{WangXu2006}, yields \eqref{eq:consistency-error}.
\end{proof}

\appendix
\section{Auxiliary proofs}

\subsection{Proof details for Lemma~\ref{lem:phi-basis-properties}}
\label{subsec:details-phi-basis-properties}

\begin{proof}
The proof begins by collecting the computational ingredients needed for
Lemma~\ref{lem:phi-basis-properties}.
The weights satisfy, for any coordinate direction $m$ and any $1\leq\rho\leq N_m+1$,
\begin{equation}
\label{eq:weight-fixed-identities}
\begin{aligned}
&\sum_{\alpha_m\in\{\rho-1,\rho\}}
\omega_{\alpha;m,\eta_{\alpha,m}(\rho)}=1,\\
&\sum_{\alpha_m\in\{\rho-1,\rho\}}
\omega_{\alpha;m,\eta_{\alpha,m}(\rho)}
\frac{x_{m,\alpha_m}+x_{m,\alpha_m+1}}2
=x_{m,\rho},\\
&\sum_{\alpha_m\in\{\rho-1,\rho\}}
\omega_{\alpha;m,\eta_{\alpha,m}(\rho)}
x_{m,\alpha_m}x_{m,\alpha_m+1}
=x_{m,\rho}^2,
\end{aligned}
\end{equation}
and any $1\leq\rho\leq N_m$,
\begin{equation}
\label{eq:weight-cell-identities}
\begin{aligned}
&\sum_{\alpha_m=\rho-1}^{\rho+1}
\omega_{\alpha;m,\rho}=2,\\
&\sum_{\alpha_m=\rho-1}^{\rho+1}
\omega_{\alpha;m,\rho}
\frac{x_{m,\alpha_m}+x_{m,\alpha_m+1}}2
=x_{m,\rho}+x_{m,\rho+1},\\
&\sum_{\alpha_m=\rho-1}^{\rho+1}
\omega_{\alpha;m,\rho}
x_{m,\alpha_m}x_{m,\alpha_m+1}
=x_{m,\rho}^2+x_{m,\rho+1}^2.
\end{aligned}
\end{equation}

Let $E_\gamma^{a,b}$ be a $(d-2)$-face of $K_\kappa$, with $a<b$.
By \eqref{eq:general-patch-vertex-sum},
\begin{equation}
\label{eq:vertex-sum-interpolant}
\begin{aligned}
\Sigma_{E_\gamma^{a,b}}(I_hv)
={}&
\sum_{\substack{
\alpha_a\in\{\gamma_a-1,\gamma_a\},\
\alpha_b\in\{\gamma_b-1,\gamma_b\}\\
\alpha_m\in\{\gamma_m-1,\gamma_m,\gamma_m+1\},
\ m\notin\{a,b\}}}
r_{K_\alpha}(v)\,
\omega_{\alpha;a,\eta_{\alpha,a}(\gamma_a)}
\omega_{\alpha;b,\eta_{\alpha,b}(\gamma_b)}
\prod_{m\notin\{a,b\}}\omega_{\alpha;m,\gamma_m}.
\end{aligned}
\end{equation}
Similarly, for a $(d-1)$-face $F_\beta^a$ of $K_\kappa$,
\eqref{eq:general-patch-derivative-average} gives
\begin{equation}
\label{eq:derivative-average-interpolant}
\begin{aligned}
\fint_{F_\beta^a}\frac{\partial I_hv}{\partial x_a}\,ds
={}&
\frac{1}{
2^{d-2}(h_{a,\beta_a-1}+h_{a,\beta_a})
}
\sum_{\substack{
\alpha_a=\beta_a-1\\
\alpha_m\in\{\beta_m-1,\beta_m,\beta_m+1\},\
m\neq a}}
\bigl(r_{K_{\alpha+e_a}}(v)-r_{K_\alpha}(v)\bigr)
\prod_{m\neq a}\omega_{\alpha;m,\beta_m}.
\end{aligned}
\end{equation}

To verify \eqref{eq:vertex-sum-reproduction}, evaluate
\eqref{eq:vertex-sum-interpolant} on the monomial basis of $P_2$.

\emph{The case $v=1$.}
Using the first identities in \eqref{eq:weight-fixed-identities} and
\eqref{eq:weight-cell-identities} gives
\[
\Sigma_{E_\gamma^{a,b}}(I_h1)
=1\cdot1\cdot\prod_{m\notin\{a,b\}}2
=2^{d-2}
=\Sigma_{E_\gamma^{a,b}}(1).
\]

\emph{The case $v=x_p$.}
If $p\in\{a,b\}$, then \eqref{eq:weight-fixed-identities} and
\eqref{eq:weight-cell-identities} give
\[
\Sigma_{E_\gamma^{a,b}}(I_hx_p)
=2^{d-2}x_{p,\gamma_p}
=\Sigma_{E_\gamma^{a,b}}(x_p).
\]
If $p\notin\{a,b\}$, then
\[
\begin{aligned}
\Sigma_{E_\gamma^{a,b}}(I_hx_p)
&=2^{d-3}
\bigl(x_{p,\gamma_p}+x_{p,\gamma_p+1}\bigr)=\Sigma_{E_\gamma^{a,b}}(x_p).
\end{aligned}
\]
Indeed, one half of the vertices have $p$-th coordinate
$x_{p,\gamma_p}$, and the other half have $p$-th coordinate
$x_{p,\gamma_p+1}$.

\emph{The case $v=x_px_q$, $p\neq q$.}
If $p,q\in\{a,b\}$, then
\[
\Sigma_{E_\gamma^{a,b}}(I_h(x_px_q))
=2^{d-2}x_{p,\gamma_p}x_{q,\gamma_q}
=\Sigma_{E_\gamma^{a,b}}(x_px_q).
\]
If $p\in\{a,b\}$ and $q\notin\{a,b\}$, then
\[
\begin{aligned}
\Sigma_{E_\gamma^{a,b}}(I_h(x_px_q))=2^{d-3}x_{p,\gamma_p}
\bigl(x_{q,\gamma_q}+x_{q,\gamma_q+1}\bigr)=\Sigma_{E_\gamma^{a,b}}(x_px_q).
\end{aligned}
\]
The case $q\in\{a,b\}$ and $p\notin\{a,b\}$ is analogous. Finally,
if $p,q\notin\{a,b\}$, then
\[
\Sigma_{E_\gamma^{a,b}}(I_h(x_px_q))
=2^{d-4}
\bigl(x_{p,\gamma_p}+x_{p,\gamma_p+1}\bigr)
\bigl(x_{q,\gamma_q}+x_{q,\gamma_q+1}\bigr)
=\Sigma_{E_\gamma^{a,b}}(x_px_q).
\]
The product of the separate coordinate sums is precisely the sum of
$x_px_q$ over all vertices of $E_\gamma^{a,b}$.

\emph{The case $v=x_p^2$.}
If $p\in\{a,b\}$, then the last identity in \eqref{eq:weight-fixed-identities} gives
\[
\Sigma_{E_\gamma^{a,b}}(I_h(x_p^2))
=2^{d-2}x_{p,\gamma_p}^2
=\Sigma_{E_\gamma^{a,b}}(x_p^2).
\]
If $p\notin\{a,b\}$, then the last identity in \eqref{eq:weight-cell-identities} gives
\[
\begin{aligned}
\Sigma_{E_\gamma^{a,b}}(I_h(x_p^2))
&=2^{d-3}
\bigl(x_{p,\gamma_p}^2+x_{p,\gamma_p+1}^2\bigr)=\Sigma_{E_\gamma^{a,b}}(x_p^2).
\end{aligned}
\]
This proves \eqref{eq:vertex-sum-reproduction} by linearity.

Next verify \eqref{eq:derivative-average-reproduction} by evaluating
\eqref{eq:derivative-average-interpolant} on the same monomial basis.

\emph{The case $v=1$.}
Since
\[
r_{K_{\alpha+e_a}}(1)-r_{K_\alpha}(1)=0,
\]
equation \eqref{eq:derivative-average-interpolant} gives
\[
\fint_{F_\beta^a}\frac{\partial I_h1}{\partial x_a}\,ds
=0
=\fint_{F_\beta^a}\frac{\partial 1}{\partial x_a}\,ds.
\]

\emph{The case $v=x_p$.}
If $p\neq a$, the coefficient $r_{K_\alpha}(x_p)$ is unchanged when
$\alpha$ is replaced by $\alpha+e_a$, and hence both derivative
averages are zero. If $p=a$, then
\[
\begin{aligned}
r_{K_{\alpha+e_a}}(x_a)-r_{K_\alpha}(x_a)
&=
\frac{x_{a,\beta_a}+x_{a,\beta_a+1}}2
-\frac{x_{a,\beta_a-1}+x_{a,\beta_a}}2=\frac{h_{a,\beta_a-1}+h_{a,\beta_a}}2.
\end{aligned}
\]
Furthermore, the first identity in
\eqref{eq:weight-cell-identities} gives
\[
\sum_{\substack{
\alpha_m\in\{\beta_m-1,\beta_m,\beta_m+1\}\\m\neq a}}
\prod_{m\neq a}\omega_{\alpha;m,\beta_m}
=2^{d-1}.
\]
Substitution into \eqref{eq:derivative-average-interpolant} yields
\[
\fint_{F_\beta^a}
\frac{\partial I_hx_a}{\partial x_a}\,ds
=1
=\fint_{F_\beta^a}
\frac{\partial x_a}{\partial x_a}\,ds.
\]

\emph{The case $v=x_px_q$, $p\neq q$.}
If $a\notin\{p,q\}$, then the difference of the two coefficients in
\eqref{eq:derivative-average-interpolant} vanishes, and both
derivative averages vanish. If $a=p$,
then
\[
\begin{aligned}
&r_{K_{\alpha+e_a}}(x_ax_q)-r_{K_\alpha}(x_ax_q)=
\frac{h_{a,\beta_a-1}+h_{a,\beta_a}}2
\frac{x_{q,\alpha_q}+x_{q,\alpha_q+1}}2.
\end{aligned}
\]
Using the second identity in \eqref{eq:weight-cell-identities} in the
$q$-direction and the first identity in the remaining $d-2$
directions, \eqref{eq:derivative-average-interpolant} shows
\[
\begin{aligned}
\fint_{F_\beta^a}
\frac{\partial I_h(x_ax_q)}{\partial x_a}\,ds=
\frac{x_{q,\beta_q}+x_{q,\beta_q+1}}2 =
\fint_{F_\beta^a}x_q\,ds
=
\fint_{F_\beta^a}
\frac{\partial(x_ax_q)}{\partial x_a}\,ds.
\end{aligned}
\]
{
The case $a=q$ is symmetric.
}

\emph{The case $v=x_p^2$.}
If $p\neq a$, the difference in
\eqref{eq:derivative-average-interpolant} again vanishes. If $p=a$,
then
\[
\begin{aligned}
r_{K_{\alpha+e_a}}(x_a^2)-r_{K_\alpha}(x_a^2)& =
x_{a,\beta_a}x_{a,\beta_a+1}
-x_{a,\beta_a-1}x_{a,\beta_a}\\
&=
x_{a,\beta_a}
\bigl(h_{a,\beta_a-1}+h_{a,\beta_a}\bigr).
\end{aligned}
\]
Together with the factor $2^{d-1}$ from the tangential directions,
 \eqref{eq:derivative-average-interpolant} gives
\[
\begin{aligned}
\fint_{F_\beta^a}
\frac{\partial I_h(x_a^2)}{\partial x_a}\,ds=2x_{a,\beta_a}=
\fint_{F_\beta^a}
\frac{\partial x_a^2}{\partial x_a}\,ds.
\end{aligned}
\]
This proves \eqref{eq:derivative-average-reproduction} by linearity.
\end{proof}

\subsection{Proof of Lemma~\ref{lem:lambda-t-identity}}
\label{subsec:proof-lambda-t}

\begin{proof}
Using \eqref{eq:quadratic-cell-average} and the exact Taylor expansion
at $x_{K_\alpha}$, for each coordinate direction $i$,
\begin{align*}
\fint_{K_{\alpha-e_i}}p\,dx-\fint_{K_\alpha}p\,dx
&=-\frac{h_{\alpha-e_i,i}+h_{\alpha,i}}2
\frac{\partial p}{\partial x_i}(x_{K_\alpha})
+\frac{(h_{\alpha-e_i,i}+h_{\alpha,i})(2h_{\alpha-e_i,i}+h_{\alpha,i})}{12}
\frac{\partial^2p}{\partial x_i^2},\\
\fint_{K_{\alpha+e_i}}p\,dx-\fint_{K_\alpha}p\,dx
&=\frac{h_{\alpha,i}+h_{\alpha+e_i,i}}2
\frac{\partial p}{\partial x_i}(x_{K_\alpha})
+\frac{(h_{\alpha,i}+h_{\alpha+e_i,i})(h_{\alpha,i}+2h_{\alpha+e_i,i})}{12}
\frac{\partial^2p}{\partial x_i^2}.
\end{align*}
Substituting $w_{\alpha,i}^{-}$ and $w_{\alpha,i}^{+}$,
the coefficient of the first derivative is
\[
-\frac{h_{\alpha-e_i,i}+h_{\alpha,i}}2w_{\alpha,i}^{-}
+\frac{h_{\alpha,i}+h_{\alpha+e_i,i}}2w_{\alpha,i}^{+}
=0,
\]
and the coefficient of the second derivative is
\begin{align*}
&\frac{(h_{\alpha-e_i,i}+h_{\alpha,i})
(2h_{\alpha-e_i,i}+h_{\alpha,i})}{12}w_{\alpha,i}^{-}+
\frac{(h_{\alpha,i}+h_{\alpha+e_i,i})
(h_{\alpha,i}+2h_{\alpha+e_i,i})}{12}w_{\alpha,i}^{+}
=-\frac{h_{\alpha,i}^2}{6}.
\end{align*}
The definition of $w_{\alpha,0}$ and summing the previous
coefficient identities over $i$ gives
\[
\lambda_{K_\alpha}(p)
=\fint_{K_\alpha}p\,dx
-\frac16\sum_{i=1}^d h_{\alpha,i}^2
\frac{\partial^2p}{\partial x_i^2}
=t_{K_\alpha}(p),
\]
as required.
\end{proof}
\subsection{Proof of Lemma~\ref{lem:boundary-trace}}
\label{subsec:boundary-trace-proof}

\begin{proof}
{
By Lemma~\ref{lem:lambda-t-identity} and
Lemma~\ref{lem:phi-basis-properties},
$\lambda_{K_\alpha}(p)=r_{K_\alpha}(p)$ for $p\in P_2(\Delta_T)$. Since
$x_{K_\alpha}-a_\alpha=\sum_{i=1}^d\delta_{\alpha,i}
\frac{h_{\alpha,i}}2e_i$ with $\delta_{\alpha,i}\in\{-1,1\}$, the
Taylor expansion of $p$ at $a_\alpha$ gives
\eqref{eq:boundary-taylor}; the pure second-derivative terms cancel in
$r_{K_\alpha}$.

The argument uses two standard estimates. By
\cite[Lemma~2]{clement1975approximation},
if $F$ is a $(d-1)$-dimensional rectangular face and
$p\in P_\ell(F)$, $\ell\geq0$, then
}
\begin{equation}
\label{eq:face-inverse}
\|p\|_{0,\infty,F}^2
\leq C|F|^{-1}\|p\|_{0,F}^2.
\end{equation}
An argument analogous to that in
\cite[pp.~24--25]{fichera1965linear} shows that, if $F$ is a face of
a rectangular element $K$ and $z\in H^1(K)$, then
\begin{equation}
\label{eq:element-trace}
\|z\|_{0,F}^2
\leq C\left(
h_K^{-1}\|z\|_{0,K}^2+h_K|z|_{1,K}^2
\right).
\end{equation}
Since $v\in H_0^2(\Omega)$, the traces of $v$, its first derivatives,
and the mixed second derivatives appearing in
\eqref{eq:boundary-taylor} vanish on the relevant boundary faces.
Thus \eqref{eq:face-inverse}--\eqref{eq:element-trace} imply, for
each term $D^jp$, $0\leq j\leq2$, appearing in
\eqref{eq:boundary-taylor},
\begin{equation}
\label{eq:boundary-trace-chain}
h_T^{2j}\|D^jp\|_{0,\infty,F}^2
\leq C\left(
h_T^{2j-d}\|\mathscr Xv-p\|_{j,\Delta_T}^2
+h_T^{2j-d+2}|\mathscr Xv-p|_{j+1,\Delta_T}^2
\right).
\end{equation}
Choosing $p$ as the local quadratic Bramble--Hilbert approximation
\cite{brennerscott} of $\mathscr Xv$ on $\Delta_T$ and substituting
\eqref{eq:boundary-trace-chain} into \eqref{eq:boundary-taylor} gives
\eqref{eq:boundary-coefficient-s3}.
\end{proof}
\bibliographystyle{plain}
\bibliography{bibfile_abbreviated}

\end{document}